\titleformat{\subsubsection}[runin]
       {\normalfont\bfseries}
       {\thesubsubsection}
       {0.5em}
       {}
       [.]
\titleformat{\subsection}[runin]
       {\normalfont\bfseries}
       {\thesubsection}
       {0.5em}
       {}
       [.]
\theoremstyle{plain}
\newtheorem{Lemma}{Lemma}[section]
\newtheorem{Corollary}{Corollary}[section]
\newtheorem{Theorem}{Theorem}[section]
\newtheorem{Proposition}{Proposition}[section]
\newtheorem*{Theorem*}{Theorem}
\newtheorem*{Claim*}{Claim}
\newtheorem*{Proposition*}{Proposition}
\newtheorem{Question}{Question}[section]
\theoremstyle{definition}
\newtheorem{Example}{Example}[section]
\newtheorem{Non-Example}{Non-Example}[section]
\newtheorem{Definition}{Definition}[section]
\theoremstyle{remark}
\newtheorem{Remark}{Remark}[section]
\newcommand\widecheck[1]{%
\savestack{\tmpbox}{\stretchto{%
  \scaleto{%
    \scalerel*[\widthof{\ensuremath{#1}}]{\kern-.6pt\bigwedge\kern-.6pt}%
    {\rule[-\textheight/2]{1ex}{\textheight}}
  }{\textheight}%
}{0.5ex}}%
\stackon[1pt]{#1}{\scalebox{-1}{\tmpbox}}%
}
\date{\vspace{-5ex}}
\newcommand{\HMto}{\widecheck{\mathrm{HM}}}
\newcommand{\HMtilde}{\widetilde{\mathrm{HM}}}
\newcommand{\fc}{\mathbf{Fc}}
\newcommand{\fctilde}{\widetilde{\mathbf{Fc}}}
\newcommand{\cto}{\mathbf{c}}
\begin{document}

\title{Exotic Dehn twists on sums of two contact $3$-manifolds}

\author[1]{Eduardo Fernández \thanks{eduardofernandez@uga.edu}}
\author[1]{Juan Muñoz-Echániz \thanks{juanmunoz@math.columbia.edu}}

\affil[1]{Department of Mathematics, University of Georgia}
\affil[1]{Department of Mathematics, Columbia University}

\maketitle




\abstract{We exhibit the first examples of exotic contactomorphisms with infinite order as elements of the contact mapping class group. These are given by certain Dehn twists on the separating sphere in a connected sum of two closed contact $3$-manifolds. We detect these by a combination of hard and soft techniques. On the one hand, we make essential use of an invariant for families of contact structures which generalises the Kronheimer--Mrowka contact invariant in monopole Floer homology. We then exploit an $h$-principle for families of convex spheres in tight contact $3$-manifolds, from which we establish a parametric version of Colin´s decomposition theorem. As a further application, we also exhibit new exotic $1$-parametric phenomena in overtwisted contact $3$-manifolds.}

\tableofcontents

\section{Introduction}

Throughout this article all $3$-manifolds are closed, oriented and connected unless otherwise noted, and all contact structures on $3$-manifolds are co-oriented and positive. 

\subsection{Main result} 

A fundamental problem in contact topology is to understand the isotopy classes of contact diffeomorphisms, usually called "contactomorphisms", of a contact manifold. The following is a longstanding open question in all dimensions: 

\begin{Question}
Do there exist exotic contactomorphisms with infinite order as elements in the contact mapping class group?
\end{Question}

In this article we answer this question in the \textit{affirmative} in dimension \textit{three}. Here, and throughout the article, by \em exotic \em we will mean \em non-trivial in the contact category \em but \em formally trivial \em (and, in particular, \em trivial in the smooth category \em). See \S \ref{exotic} and below for further details. We consider a contact $3$-manifold given by the connected sum of two contact $3$-manifolds $(Y_\#, \xi_\#) := (Y_- , \xi_-) \# (Y_+ , \xi_+ )$. Recall that the connected sum is built by removing Darboux balls $B_\pm \subset Y_\pm$ and gluing the complements $Y \setminus B_\pm$ by an orientation-reversing diffeomorphism of their boundary spheres which preserves their characteristic foliations. Reparametrisation of one of the spheres provides a $\mathrm{U}(1)$ worth of choices for gluing, and thus $(Y_\#, \xi_\# )$ naturally belongs in a \textit{family} of contact $3$-manifolds
\begin{align*}
    (Y_\# , \xi_\# ) \hookrightarrow \mathcal{Y}_\# \rightarrow \mathrm{U}(1).
\end{align*}
The monodromy of this family is realised by a contactomorphism of $(Y_\# , \xi_\# )$,  well-defined up to contact isotopy. Its underlying diffeomorphism is the \textit{Dehn twist} on the separating sphere $S_\#$ in the neck of the connected sum $Y_\# = Y_- \# Y_+$. We denote this contactomorphism $\tau_{S_\#}$ and call it the \textit{contact Dehn twist} on $S_\#$. Unlike previous constructions of contactomorphisms, the contact Dehn twist is a \textit{local symmetry} of an arbitrarily small neighbourhood of a $2$-sphere (see \S \ref{dehnsection} for further details). As a diffeomorphism, the Dehn twist can be isotoped so that it is supported on a neighborhood $[0,1]\times S^2$ of $S_\#\simeq S^2$ on which it acts as $[0,1]\times S^2\ni(t,p)\mapsto (t, R_{\theta(t)}(p))$, where $R_\varphi$ denotes the rotation of angle $\varphi$ along the $z$ axis in $\mathbb{R}^3$, and $\theta:[0,1]\rightarrow [0,2\pi]$ is a smooth function with $\theta\equiv 0$ near $t=0$ and $\theta\equiv 2\pi$ near $t=1$. Because $\pi_1 \mathrm{SO}(3) = \mathbb{Z}/2$ we have that the $2$-fold iterate $\tau_{S_\#}^2$ is \textit{smoothly} isotopic to the identity, but it remains to be understood whether

\begin{Question}
Is $\tau_{S_\#}^2$ contact isotopic to the identity?
\end{Question}

Associated to the contact structures $\xi_\pm$ we have their Kronheimer--Mrokwa contact invariants $\mathbf{c}(\xi_\pm ) \in \HMto (-Y_\pm )$ \cite{monocont}\cite{monolens}. These are canonical elements (defined up to sign) in the "to" flavor of the monopole Floer homology of $- Y_\pm$. The contact invariant was also defined in the setting of Heegaard-Floer homology by Ozsváth and Szabó \cite{OScontact}. Under the isomorphism between the monopole and Heegaard-Floer homologies \cite{KLT}\cite{CGH} the contact invariants agree. Throughout this article we only consider monopole Floer homology and the contact invariant with \textit{coefficients} in $\mathbb{Q}$, for simplicity. The main result of this article is the following

\newpage

\begin{Theorem}\label{mainthm}
Let $(Y_\pm , \xi_\pm )$ be \textit{irreducible} contact $3$-manifolds. Suppose that the Kronheimer--Mrowka contact invariants $\mathbf{c} (\xi_{\pm}  )$ do not lie in the image of the $U$-map $$U : \widecheck{\mathrm{HM}}(-Y_{\pm} ) \rightarrow \widecheck{\mathrm{HM}}(-Y_{\pm} ).$$ Then
\begin{enumerate}[label=(\Alph*)]
\item The $k$-fold iterates $\tau_{S_\#}^k$, $k \geq 1$, of the contact Dehn twist are not contact isotopic to the identity.
\item If the Euler classes of $\xi_{\pm}$ vanish, then $\tau_{S_\#}^2$ is formally contact isotopic to the identity.
\end{enumerate}
\end{Theorem}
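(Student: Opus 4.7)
The plan for Part (A) is to apply a parametric Kronheimer--Mrowka contact invariant $\fc$ to the $k$-fold cover $\mathcal{Y}_\#^{(k)}\to\mathrm{U}(1)$ of the family $\mathcal{Y}_\#$, whose monodromy is $\tau_{S_\#}^k$. The key observation is that if $\tau_{S_\#}^k$ is contact isotopic to the identity, then $\mathcal{Y}_\#^{(k)}$ extends as a family of contact structures over the disc $D^2$, which forces the parametric invariant to vanish in the appropriate flavor of parametric monopole Floer homology (for instance in $\HMtilde(-Y_\#)$). The task therefore reduces to showing that $\fc(\mathcal{Y}_\#^{(k)})$ does not vanish under the stated hypothesis.

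To compute $\fc(\mathcal{Y}_\#^{(k)})$ I would establish a parametric connected-sum formula. The intuition is that rotating the gluing of the two Darboux balls around $S_\#$ produces a $\mathrm{U}(1)$-family of gauge-theoretic deformations whose Euler class should manifest itself as the $U$-map acting on each of the two Floer-theoretic factors. One therefore expects a formula of the schematic form
\begin{equation*}
\fc(\mathcal{Y}_\#^{(k)}) \;=\; k\cdot \bigl[\cto(\xi_-)\otimes \cto(\xi_+)\bigr]
\end{equation*}
landing in a target that is, in essence, the tensor product $\HMto(-Y_-)\otimes \HMto(-Y_+)$ modulo the image of $U$ on either factor. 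With $\mathbb{Q}$ coefficients and under the hypothesis $\cto(\xi_\pm)\notin\mathrm{im}(U)$, the right-hand side is then non-zero for every $k\geq 1$. The irreducibility assumption on $Y_\pm$ is used to control reducible Seiberg--Witten solutions and to ensure the connected-sum gluing formula behaves cleanly.

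Part (B) I would prove by a direct homotopy-theoretic construction. The smooth isotopy from $\tau_{S_\#}^2$ to the identity arises from the generator of $\pi_1\mathrm{SO}(3)$ acting by rotations in a collar of $S_\#$; the induced loop of almost contact structures starting and ending at $\xi_\#$ is the sole obstruction to upgrading this to a formal contact isotopy. Oriented $2$-plane fields on $Y_\#$ are sections of the $S^2$-bundle $\mathrm{Gr}_2^+(TY_\#)$, and the class of our loop in the relevant homotopy group is controlled by the Euler class $e(\xi_\#)=e(\xi_-)+e(\xi_+)$. When $e(\xi_\pm)=0$, the plane field bundle $\xi_\#$ is trivialisable, and one can explicitly realise the null-homotopy of the $4\pi$-rotation in $\mathrm{SO}(3)$ fibrewise on the neck, producing the desired formal contact isotopy.

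The principal obstacle is the parametric connected-sum formula in Part (A). Defining a well-behaved parametric contact invariant, identifying the $\mathrm{U}(1)$-family correction with the $U$-map, and verifying that the hypothesis $\cto(\xi_\pm)\notin\mathrm{im}(U)$ propagates to non-vanishing in the parametric target all require delicate Seiberg--Witten bookkeeping and an interplay between the various flavors of monopole Floer homology. The $h$-principle for families of convex spheres announced in the abstract should be useful here: it ought to reduce the comparison of parametric families of contact structures on $Y_\#$ to families of convex spheres in a standard neck, which is precisely the geometric input that the parametric gluing formula requires.
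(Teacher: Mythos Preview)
Your Part (B) is essentially the paper's argument (Lemma \ref{ftrivialtwist}): vanishing Euler class trivialises $\xi_\#$, so the formal evaluation fibration $\Xi(Y_\#,\xi_\#)\to S^2$ admits a section and the obstruction class dies formally.

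For Part (A), however, your architecture diverges from the paper's in a way that leaves real gaps. You propose to place the connected-sum decomposition on the \emph{Floer} side, via a putative formula $\fc(\mathcal{Y}_\#^{(k)})=k\cdot[\cto(\xi_-)\otimes\cto(\xi_+)]$ in some quotient of $\HMto(-Y_-)\otimes\HMto(-Y_+)$. The paper does the opposite: it places the connected-sum decomposition on the \emph{contact topology} side. The $h$-principle for convex spheres is not an auxiliary input to a gauge-theoretic gluing formula; it is the gluing formula. Concretely, Theorem \ref{thm:ConnectedSumTight} gives a homotopy equivalence $\mathcal{C}(Y_\#,\xi_\#,B_\#)\simeq\mathcal{C}(Y_-,\xi_-,B_-)\times\mathcal{C}(Y_+,\xi_+,B_+)$, under which the obstruction class factorises as $\mathcal{O}_{\xi_\#}=(\mathcal{O}_{\xi_-},\mathcal{O}_{\xi_+})$ (Proposition \ref{obstrsum}). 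The Floer invariant $\fctilde$ is then applied to each $(Y_\pm,\xi_\pm)$ \emph{separately}: $\cto(\xi_\pm)\notin\mathrm{Im}\,U$ forces $\mathcal{O}_{\xi_\pm}\neq 0$ in $H_1(\mathcal{C}(Y_\pm,\xi_\pm,B_\pm);\mathbb{Q})$ via Theorem \ref{commdiag}. A short Wang-sequence argument then shows $(\mathcal{O}_{\xi_-},0)$ survives to a class of infinite order in $H_1(\mathcal{C}(Y_\#,\xi_\#);\mathbb{Q})$, and this class is $\tau_{S_\#}^2$. Your Floer-side gluing formula, by contrast, is neither stated precisely nor proved, and would require a K\"unneth theorem together with a parametric neck-stretching analysis you do not supply.

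Two further gaps. First, you misidentify the role of irreducibility: it has nothing to do with reducible Seiberg--Witten solutions. It is used (i) for Hatcher's theorem that $\mathrm{Emb}(S^2,Y_\#)_{S_\#}\simeq\mathrm{SO}(3)$, which underlies the $h$-principle step, and (ii) together with the fact that $\cto(\xi_\pm)\notin\mathrm{Im}\,U$ forces $Y_\pm$ to be non-$L$-spaces (hence aspherical), to conclude $\pi_1\mathrm{Diff}(Y_\#)=0$ (Lemma \ref{pi1diff}). Second, and relatedly, your implication ``$\tau_{S_\#}^k$ contact isotopic to the identity $\Rightarrow$ $\mathcal{Y}_\#^{(k)}$ extends over $D^2$ as a loop in $\mathcal{C}(Y_\#,\xi_\#)$'' is not automatic: the loop in $\mathcal{C}$ is only well-defined modulo the image of $\pi_1\mathrm{Diff}_0(Y_\#)$, so passing from non-triviality in $H_1(\mathcal{C})$ to non-triviality in $\pi_0\mathrm{Cont}_0$ genuinely requires $\pi_1\mathrm{Diff}_0(Y_\#)=0$. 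This is exactly where irreducibility enters, and your sketch omits it.
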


We now explain the meaning of the assertion in Theorem \ref{mainthm}(B). Given a contact $3$-manifold $(Y,\xi)$, a \textit{formal contactomorphism} of $(Y, \xi )$ consists of a pair $(f , F )$ where $f$ is a diffeomorphism of $Y$ and $F = (F_s)$ is a homotopy through vector bundle isomorphisms $F_s : TY \rightarrow f^\ast TY$ such that $F_0  = d f$ and $F_1$ preserves $\xi$. Any contactomorphism $f$ yields a formal contactomorphism, and one says that $f$ is \textit{formally trivial} if $f$ can be deformed to the identity through formal contactomorphisms. A contactomorphism $f$ of $(Y, \xi )$ will be called \textit{exotic} if it is formally contact isotopic to the identity but is not contact isotopic to the identity. Thus, exotic contactomorphisms are those which are "geometrically" non-trivial, and not for reasons having to do with the underlying smooth or tangential structures. See \S \ref{exotic} for further context. Thus, Theorem \ref{mainthm} asserts that $\tau_{S_\#}^2$ and all its iterates are exotic.

\begin{Remark}In fact, we will establish more: the contactomorphism $\tau_{S_\#}^2$ from Theorem \ref{mainthm} has infinite order as an element in the \textit{abelianisation} of the group
\begin{align}
\mathrm{Ker} \big( \pi_0 \mathrm{Cont}(Y , \xi ) \rightarrow \pi_0 \mathrm{Diff}(Y ) \big). \label{Cont0}
\end{align}
\end{Remark}

\begin{Remark}
For comparison with Theorem \ref{mainthm}, whenever either of $(Y_{\pm} , \xi_{\pm})$ is the tight $S^1 \times S^2$ or a quotient of tight $(S^3 , \xi )$ (e.g. the lens spaces $L(p,q)$ or the Poincaré sphere $\Sigma (2,3,5)$) then the squared contact Dehn twist $\tau_{S_\#}^2$ of $(Y_\# , \xi_\# )$ is contact isotopic to the identity, see Lemmas \ref{quotients}-\ref{S1S2}.
\end{Remark}

\
We also establish an analogous result for connected sums with multiple summands. Let $(Y, \xi )$ be a \textit{tight} $3$-manifold. By the Prime Decomposition Theorem combined with Colin's Decomposition Theorem \cite{colin} (see also \cite{honda,DGsum}) we have a unique connected sum decomposition $$(Y, \xi ) \cong (Y_0 , \xi_0 ) \# \cdots \# (Y_N , \xi_N ) $$into tight contact $3$-manifolds $(Y_j , \xi_j )$, where each piece $Y_j$ is a prime $3$-manifold. Let $n+1 \leq N$ be the number of prime summands $(Y_j, \xi_j )$ such that $\mathbf{c}(\xi_j ) \notin \mathrm{Im}U$ and the Euler class of $\xi_j$ vanishes. Let $\mathcal{C}(Y, \xi )$ (resp. $\Xi (Y, \xi )$) be the space of contact structures (resp. co-oriented $2$-plane fields) on $Y$ in the path-component of $\xi$. 

\begin{Theorem}\label{mainthm2}
With $(Y, \xi )$ as above, when $n \geq 1$ there is a $\mathbb{Z}^{n}$ subgroup in the kernel of 
$$\pi_1  \mathcal{C}(Y , \xi )  \rightarrow \pi_1 \Xi (Y , \xi )  $$
which induces a $\mathbb{Z}^{n}$ subgroup in the first singular homology $\mathrm{H}_1 \big( \mathcal{C}(Y, \xi )  ; \mathbb{Z} \big)$.
\end{Theorem}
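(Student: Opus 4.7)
The plan is to construct $n$ loops $\gamma_1, \ldots, \gamma_n \in \pi_1 \mathcal{C}(Y, \xi)$ arising from iterated contact Dehn twists, show that each lies in the kernel of the natural map to $\pi_1 \Xi(Y, \xi)$, and that together they induce a $\mathbb{Z}^n$ direct summand of $H_1(\mathcal{C}(Y, \xi); \mathbb{Z})$.

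After reordering, let $(Y_0, \xi_0), \ldots, (Y_n, \xi_n)$ be the $n+1$ distinguished prime summands with $\mathbf{c}(\xi_j) \notin \mathrm{Im}\, U$ and $e(\xi_j) = 0$. For each $j \in \{1, \ldots, n\}$, choose a pairwise disjoint separating $2$-sphere $S_j \subset Y$ realising a contact connected sum decomposition $(Y, \xi) = (Y^-_j, \xi^-_j) \# (Y^+_j, \xi^+_j)$ in which $(Y_0, \xi_0)$ is a prime summand of $(Y^-_j, \xi^-_j)$ and $(Y_j, \xi_j)$ is a prime summand of $(Y^+_j, \xi^+_j)$. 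The associated contact Dehn twist $\tau_{S_j}$ is the monodromy of the $\mathrm{U}(1)$-family $\mathcal{Y}_{\#_j}$ of contact structures on $Y$, so pulling back along the connected double cover $S^1 \to S^1$ produces a loop $\gamma_j$ in $\mathcal{C}(Y, \xi)$ whose monodromy, under the Serre fibration $\mathrm{Cont}(Y, \xi) \hookrightarrow \mathrm{Diff}_0(Y) \to \mathcal{C}(Y, \xi)$, is $\tau_{S_j}^2$.

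The same monodromy construction at the level of formal $2$-plane fields shows that the image of $\gamma_j$ in $\pi_1 \Xi(Y, \xi)$ is detected by the formal isotopy class of $\tau_{S_j}^2$, which is trivial by part (B) of Theorem \ref{mainthm} applied to $(Y^-_j, \xi^-_j) \# (Y^+_j, \xi^+_j)$ (whose summands have vanishing Euler class, inherited from $(Y, \xi)$). So each $\gamma_j$ lies in $\ker\big(\pi_1 \mathcal{C}(Y, \xi) \to \pi_1 \Xi(Y, \xi)\big)$. To promote the subgroup $\langle \gamma_1, \ldots, \gamma_n \rangle$ to a $\mathbb{Z}^n$ direct summand of $H_1(\mathcal{C}(Y, \xi); \mathbb{Z})$, the plan is to use the parametric Kronheimer--Mrowka contact invariant constructed earlier in the paper: for each separating sphere $S_j$, pairing the first-order family contact invariant with the splitting along $S_j$ yields a homomorphism $\Phi_j \colon H_1(\mathcal{C}(Y, \xi); \mathbb{Z}) \to \mathbb{Z}$ (after a suitable integer normalisation). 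Using a connected-sum formula for the parametric contact invariant together with $\mathbf{c}(\xi_0), \mathbf{c}(\xi_j) \notin \mathrm{Im}\, U$, one computes that $\Phi_j(\gamma_k) = \delta_{jk}$ up to a triangular invertible change of basis, so that $(\Phi_1, \ldots, \Phi_n)$ provides a $\mathbb{Z}$-linear splitting of the inclusion $\langle \gamma_1, \ldots, \gamma_n \rangle \hookrightarrow H_1(\mathcal{C}(Y, \xi); \mathbb{Z})$.

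The main obstacle is that Theorem \ref{mainthm} was stated for the case of two irreducible pieces, whereas the pieces $(Y^\pm_j, \xi^\pm_j)$ are generally reducible. The expected remedy is that the underlying monopole-Floer argument invokes only the non-vanishing of $\mathbf{c}$ modulo $\mathrm{Im}\, U$ on each side of $S_j$ (localised to a single good prime summand) and the vanishing of the Euler classes, both of which hold here. Establishing this mild generalisation---along with the connected-sum formula for the parametric contact invariant that lets the computation of $\Phi_j(\gamma_k)$ localise to the good summands on either side of $S_j$---is the heart of the matter; once in place, the chain of implications outlined above completes the argument.
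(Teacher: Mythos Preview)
Your proposal diverges from the paper's argument in a way that leaves a real gap. The paper does \emph{not} build detection homomorphisms $\Phi_j$ out of a ``connected-sum formula for the parametric contact invariant''; no such formula is proved anywhere in the paper, and establishing a gluing formula for $\widetilde{\mathbf{fc}}$ along a separating sphere would be a substantial analytic result in its own right. What the paper does instead is topological: it uses the evaluation fibration $\mathcal{C}(Y,\xi)\to (S^2)^{n+1}$ at the $n+1$ south poles of the separating spheres, and then identifies the fibre $\mathcal{F}$ with the product $\mathcal{C}(Y_0,B_{0-})\times\prod_j \mathcal{C}(Y_j,B_{j+}\cup B_{j-})\times\mathcal{C}(M,B_{M+})$ via the $h$-principle of Theorem~\ref{thm:ConnectedSumTight} (equivalently Theorem~\ref{sumcontact2}). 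Once $H_1(\mathcal{F})$ is factored this way, the families invariant is only ever applied to a \emph{single} prime summand $(Y_j,\xi_j)$ at a time (Corollary~\ref{cor:NoLagrangianRot}), and a short Serre-spectral-sequence computation of the connecting map $\delta:\mathbb{Q}^{n+1}\to H_1(\mathcal{F})$, using Proposition~\ref{obstrsum}, shows that an explicit $n$-dimensional subspace of $H_1(\mathcal{F})$ avoids $\mathrm{Im}\,\delta$ and hence injects into $H_1(\mathcal{C}(Y,\xi))$. Your outline omits this $h$-principle step entirely, and without it you have no mechanism to ``localise'' the detection to the good prime summands.

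There is also a smaller error in your formal-triviality step. You write that the pieces $(Y_j^\pm,\xi_j^\pm)$ have vanishing Euler class ``inherited from $(Y,\xi)$'', but the hypotheses of Theorem~\ref{mainthm2} do \emph{not} assume $e(\xi)=0$; only the $n{+}1$ distinguished prime summands have vanishing Euler class, while the remaining summands collected in $(M,\xi_M)$ need not. This is repairable: if you order the sum as $Y_0\#\cdots\#Y_n\#M$ and take $S_j$ to separate $Y_0\#\cdots\#Y_{j-1}$ from the rest, then $(Y_j^-,\xi_j^-)$ consists only of good summands and Lemma~\ref{ftrivialtwist} applies directly to $\mathcal{O}_{\xi_j^-}$. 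But note that the paper's formal-triviality argument, like its detection argument, works summand-by-summand rather than by invoking Theorem~\ref{mainthm}(B) on the reducible pieces.
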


In particular, the exotic subgroup $\mathbb{Z}^n$ exhibited in Theorem \ref{mainthm2} can be arbitrarily large in the following sense: for every $n\geq 1$ there exists a tight contact $3$-manifold, in fact infinitely many, such that the kernel of the previous homomorphism contains a subgroup isomorphic to $\mathbb{Z}^n$. 

\begin{Remark} 
The $n$ homologically independent loops of contact structures that we detect in Theorem \ref{mainthm2} yield under the natural map 
$$\pi_1 \mathcal{C}(Y , \xi ) \rightarrow \pi_0 \mathrm{Cont}(Y , \xi )$$
the squared contact Dehn twists on each of the $n$ spheres which separate the $n+1$ prime summands $(Y_j , \xi_j )$. However, we are unable to establish that the corresponding squared contact Dehn twists are non-trivial or that they yield a subgroup $\mathbb{Z}^n \subset \pi_0 \mathrm{Cont}(Y, \xi )$ when $n \geq 2$, but we conjecture that this should be true. See Remark \ref{why}.
\end{Remark}

The proofs of Theorems \ref{mainthm} and \ref{mainthm2} combine rigid obstructions arising from Floer homology together with flexibility results. On the one hand, an essential ingredient is a families generalisation of the Kronheimer--Mrowka contact invariant in monopole Floer homology, introduced by the second author \cite{yo}. This obstructs the existence of sections of a natural fibration given by the \textit{evaluation map} $ev : \mathcal{C}(Y, \xi ) \rightarrow S^2$ which sends a contact structure to its plane at $p$, where $p \in Y$ is some fixed point. We combine this machinery with the multi-parametric convex surface theory techniques introduced by the first author together with J. Martínez-Aguinaga and F. Presas \cite{fmp}. In particular, we use these techniques to establish the following generalisation of the much celebrated Decomposition Theorem of Colin \cite{colin} which could be of independent interest for contact topologists, and which will be crucial to the proof of Theorem \ref{mainthm2}. 

We consider two tight contact $3$-manifolds $(Y_\pm , \xi_\pm )$ equipped with Darboux balls $B_\pm \subset (Y_\pm , \xi_\pm )$. Let $\mathcal{C}(Y_\pm , \xi_\pm , B_\pm ) \subset \mathcal{C}(Y_\pm , \xi_\pm )$ denote the subspace of contact structures on $Y_\pm$ that coincide with $\xi_\pm$ over $B_\pm$. We consider the evaluation maps $ev_\pm : \mathcal{C}(Y_\pm, \xi_\pm ) \rightarrow S^2$ which send a contact structure to its plane at the point $p_\pm$ given by the center of $B_\pm$. These maps are fibrations, and the inclusion of $\mathcal{C}(Y_\pm, \xi_\pm , B_\pm )$ into the fiber of $ev_\pm$ induces a homotopy equivalence. We form the connected sum $(Y_\#, \xi_\#) = (Y_-  , \xi_-) \# (Y_+ , \xi_+ )$ by carving out the balls $B_\pm$ and gluing together the boundary components thus created. Consider the evaluation map $ev_\# : \mathcal{C}(Y_\#, \xi_\# ) \rightarrow S^2$ at a point on the "neck" region. We establish the following $h$-principle type result, which should be regarded as a parametric version of Colin's Theorem:

\begin{Theorem}\label{sumcontact2}
The inclusion of $\mathcal{C}(Y_- , \xi_- , B_- ) \times \mathcal{C}(Y_+ , \xi_+ , B_+)$ into the fiber of $ev_\#$ induces a homotopy equivalence. Thus, there is a fibration sequence
$$\mathcal{C}(Y_- , \xi_- , B_- ) \times \mathcal{C}(Y_+ , \xi_+ , B_+)\hookrightarrow \mathcal{C}(Y_\#,\xi_\#)\xrightarrow{ev_\#} S^2 .$$
\end{Theorem}  

We refer to Theorem \ref{thm:ConnectedSumTight} for a more general version. 

\subsection{Examples}

We now give examples of irreducible contact $3$-manifolds $(Y, \xi )$ such that $\mathbf{c}(\xi ) \notin \mathrm{Im}U$, many of which also have vanishing Euler class. 

\begin{Example}(Links of singularities)
The simplest example is the Brieskorn sphere $$\Sigma (p,q,r) = \big\{ (x,y,z) \in \mathbb{C}^3 \, | \, x^p + y^q + z^r = 0 \text{  and  } |x|^2 + |y|^2 + |z|^2 = \epsilon \big\}$$ where $\epsilon \in \mathbb{R}_{> 0}$ is small and $p,q,r \geq 1$ are integers with $1/p + 1/q + 1/r < 1$, equipped with the contact structure $\xi_{\mathrm{sing}}$ induced from the Brieskorn singularity. More generally, we could take any isolated normal surface singularity germ $(X, o )$ and let $(Y, \xi_{\mathrm{sing}} )$ be the contact manifold arising as the \textit{link} of the singularity. Neumann \cite{neumann} proved that the $3$-manifold $Y$ is irreducible. Provided that $Y$ is also a rational homology sphere, then the following are equivalent statements, as proved by Bodnár, Plamenevskaya \cite{plamenevskaya} and Némethi \cite{nemethi}:
\begin{enumerate}[label=(\alph*)]
    \item $\mathbf{c}(\xi_{\mathrm{sing}}  ) \notin \mathrm{Im} U$
    \item $Y$ is not an $L$-space
    \item $(X, o )$ is not a rational singularity.
\end{enumerate}
For instance, all Seifert fibered integral homology spheres excluding $S^3$ or the Poincaré sphere carry a contact structure $\xi_{\mathrm{sing}}$ with the above properties.
\end{Example}

\begin{Example}
Several surgeries on the Figure Eight knot are hyperbolic (hence irreducible) and support contact structures with $c(\xi ) \notin \mathrm{Im}U$. Contact structures on these manifolds have been classified by Conway and Min \cite{conway-min}. 
\end{Example}

\begin{Example}
 All but one of the $\frac{n(n-1)}{2}$ tight contact structures supported on $- \Sigma ( 2,3,6n-1)$ up to isotopy, classified by Ghiggini and Van Horn-Morris \cite{ghiggini-vanhorn-morris}.
\end{Example}

\subsection{Exotic overtwisted phenomena} 

Let $(Y, \xi )$ be such that $\mathbf{c}(\xi ) \notin \mathrm{Im}U$ and $\xi$ has vanishing Euler class. Let $B \subset (Y,\xi )$ be a Darboux ball. From this, one can produce overtwisted contact manifolds by modifying $(Y, \xi )$ by a Lutz Twist inside $B$, or by taking the connected sum (using $B$) with an overtwisted contact manifold $(M, \xi_{\mathrm{ot}})$. In either case, the squared contact Dehn twist on the boundary of $B$ becomes isotopic to the identity in this new overtwisted manifold, by an application of Eliashberg's \textit{h}-principle for overtwisted contact structures \cite{EliashbergOT}. However, this has surprising implications (see \S \ref{OTsection} for the precise statement) 

\begin{Proposition}
\begin{itemize}
    \item [(A)] There exist overtwisted contact $3$-manifolds that have an exotic loop of Lutz Twist embeddings.
    \item [(B)] There exist overtwisted contact $3$-manifolds that have an exotic loop of standard sphere embeddings.
\end{itemize}
\end{Proposition}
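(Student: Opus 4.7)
The plan is to play the rigidity of Theorem \ref{mainthm} against Eliashberg's h-principle for overtwisted contact structures, thereby converting the non-triviality of a loop of contact structures into non-triviality of a loop of embeddings.

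Begin with irreducible $(Y_\pm, \xi_\pm)$ satisfying the hypotheses of Theorem \ref{mainthm} with vanishing Euler class, and set $(Y_\#, \xi_\#) := (Y_-, \xi_-) \# (Y_+, \xi_+)$. By Theorem \ref{mainthm} the $\mathrm{U}(1)$-family $\gamma$ underlying $\tau_{S_\#}^2$ represents an infinite-order class in $\pi_1 \mathcal{C}(Y_\#, \xi_\#)$ that is formally trivial in $\pi_1 \Xi(Y_\#, \xi_\#)$. Fix a Darboux ball $B \subset Y_-$ disjoint from the neck sphere $S_\#$, and realise $\tau_{S_\#}^2$ by a contactomorphism supported in a collar of $S_\#$ disjoint from $B$. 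For part \textbf{(A)}, I would perform a Lutz twist along a transverse knot inside $B$ to obtain $(Y^{ot}, \xi^{ot})$ equipped with a distinguished contact embedding $e_L \colon (\mathrm{Lutz}, \xi_L) \hookrightarrow (Y^{ot}, \xi^{ot})$. For part \textbf{(B)}, I would instead form a connected sum with an overtwisted $(M, \xi_{\mathrm{ot}})$ along a ball inside $B$, producing $(Y^{ot}, \xi^{ot})$ equipped with a distinguished standard sphere embedding $e_S \colon S^2 \hookrightarrow (Y^{ot}, \xi^{ot})$ (the new neck). In either case, $\tau_{S_\#}^2$ transports verbatim to $(Y^{ot}, \xi^{ot})$, still supported away from $e_\bullet$.

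The exotic loop is then extracted from the evaluation fibration
\begin{equation*}
\mathrm{Cont}_c(Y^{ot} \setminus e_\bullet, \xi^{ot}) \longrightarrow \mathrm{Cont}_0(Y^{ot}, \xi^{ot}) \xrightarrow{\mathrm{ev}} \mathrm{Emb}^{\mathrm{cont}}_0(e_\bullet, (Y^{ot}, \xi^{ot})),
\end{equation*}
for $\bullet \in \{L, S\}$, whose long exact sequence relates $\pi_1$ of the base to $\pi_0$ of the fiber via a connecting homomorphism. The class $[\tau_{S_\#}^2] \in \pi_0 \mathrm{Cont}_c(Y^{ot} \setminus e_\bullet, \xi^{ot})$ lives in the tight complement of $e_\bullet$, where a relative version of Theorem \ref{mainthm} forces it to be non-trivial, while its image in $\pi_0 \mathrm{Cont}_0(Y^{ot}, \xi^{ot})$ vanishes by Eliashberg's h-principle. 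Exactness therefore produces a non-trivial class in $\pi_1 \mathrm{Emb}^{\mathrm{cont}}_0(e_\bullet, (Y^{ot}, \xi^{ot}))$—the sought-after exotic loop. Its smooth triviality is confirmed by the analogous smooth fibration: $\tau_{S_\#}^2$ is smoothly isotopic to the identity through an isotopy supported in the collar of $S_\#$, hence disjoint from $e_\bullet$, contributing trivially via the smooth connecting map.

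The principal obstacle will be making this fibration sequence rigorous and comparing its connecting maps in the smooth and contact categories. Verifying that the evaluation map is a Serre fibration requires a parametric contact isotopy extension theorem for the relevant class of embeddings, and for part (B) this is expected to be facilitated by Theorem \ref{sumcontact2}, which identifies the relevant component of contact structures on the complement of $e_S$ with the product governed by Theorem \ref{mainthm}. Once this is in place, the compatibility of the smooth and contact connecting maps is essentially formal, and the comparison yields the asserted exotic loop.
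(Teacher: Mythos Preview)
Your overall strategy matches the paper's: make the contact manifold overtwisted, use Eliashberg's $h$-principle to trivialize the squared Dehn twist by a contact isotopy $\varphi_t$, and take $\varphi_t \circ e_\bullet$ as the loop of embeddings. The non-triviality argument via the fibration is fine. However, two points deserve comment.

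First, your setup is more elaborate than needed. The paper works with a \emph{single} irreducible tight $(Y,\xi)$ satisfying $\mathbf{c}(\xi)\notin\mathrm{Im}\,U$, forms $(Y,\xi)\#(M,\xi_{\mathrm{ot}})$ (or performs a Lutz twist in a Darboux ball), and uses the \emph{relative} result that $\tau_{\partial B}^2$ is non-trivial rel.\ $B$ (Corollaries~\ref{criterionB} and~\ref{cor:NoLagrangianRot}). You instead start from $(Y_-,\xi_-)\#(Y_+,\xi_+)$, invoke the full Theorem~\ref{mainthm}, and then connect-sum again. This works, but the extra summand buys nothing and forces you to deal with a three-piece manifold.

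Second, and more seriously, your exoticness argument has a gap. You claim the loop is ``smoothly trivial'' because $\tau_{S_\#}^2$ is smoothly isotopic to the identity via an isotopy disjoint from $e_\bullet$, hence ``contributes trivially via the smooth connecting map.'' But this only shows that $\partial^{\mathrm{smooth}}(\beta)=0$, i.e.\ that $\beta$ lifts to $\pi_1\mathrm{Diff}_0(Y^{ot})$; it does \emph{not} show $\beta=0$ in $\pi_1\mathrm{Emb}$, and indeed for a sum of three or more aspherical pieces $\pi_1\mathrm{Diff}_0$ is not even finitely generated. Moreover, what is actually required is \emph{formal} triviality (vanishing in $\pi_1\mathrm{FCEmb}$ or $\pi_1\mathrm{FLT}$), not merely smooth triviality. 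The paper handles this directly: since the Euler class vanishes, $\tau_{S_\#^+}^2$ is formally isotopic to the identity rel.\ a neighbourhood of the south pole (Lemma~\ref{ftrivialtwist}), and Eliashberg's theorem together with Lemma~\ref{moserflemma} lets one deform this \emph{formal} isotopy to a genuine contact isotopy $\varphi_t$. The loop $\varphi_t\circ e$ is then formally trivial \emph{by construction}, since it is homotopic through formal isotopies to one fixing $e$. You should replace your connecting-map argument with this direct construction.
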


In other words, (A) says that the \textit{h}-principle for codimension $0$ isocontact embeddings of embedded $S^1$-families of overtwisted disks fails in $1$-parametric families,  see \cite{GromovPDR,EliashbergMishachevHPrinciple}. To the best of our knowledge this is the first example of this nature. On the other hand, (B) says that the \textit{h}-principle for standard spheres \cite{fmp} in tight contact $3$-manifolds fails in the overtwisted case. 

The first known exotic phenomena regarding overtwisted disks in  overtwisted contact $3$-manifolds are due to Vogel \cite{Vogel}. He has proved that the space of overtwisted disks in certain overtwisted $3$-sphere is disconnected and used this to construct an exotic loop of overtwisted contact structures. By Eliashberg's $h$-princple \cite{EliashbergOT}, understanding the homotopy type of the space of overtwisted disks is the only obstacle remaining in order to completely understand the homotopy type of the space of overtwisted contact structures on a $3$-manifold. Thus, understanding families of overtwisted disks or overtwisted objects bears special importance in $3$-dimensional contact topology.


\subsection{Context} 

\subsubsection{h-principles} As with symplectic topology, an ubiquitous theme of contact topology is the contrast between two types of behaviours: flexible (similar to differential topology) and rigid (similar to algebraic geometry). Beyond the tight-overtwisted dichotomy, $3$-dimensional contact topology would seem to be dominated by \textit{flexibility}, due to the following $h$-principle of Eliashberg and Mishachev:
\begin{Theorem*}[\cite{EM}]\label{thm:EM}
Let $(\mathbb{B}^3 , \xi_{\mathrm{st}} = \mathrm{Ker}(dz -ydx ) )$ be the standard contact unit $3$-ball. Then the inclusion $\mathrm{Cont}(\mathbb{B}^3 , \xi_{\mathrm{st}} ) \rightarrow \mathrm{Diff}(\mathbb{B}^3 ) $ is a homotopy equivalence. 
\end{Theorem*}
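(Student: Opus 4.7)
The plan is to deduce the homotopy equivalence from two ingredients: Hatcher's resolution of the Smale conjecture, which identifies $\mathrm{Diff}(\mathbb{B}^3)$ with $O(3)$ up to homotopy, and a parametric $h$-principle showing that any family of tight contact structures on $\mathbb{B}^3$ with standard germ on the boundary can be simultaneously trivialised by a family of diffeomorphisms relative to the boundary.

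First, I would recast the statement as a relative lifting problem. Given a compact CW-pair $(K, \partial K)$ and a map $f \colon K \to \mathrm{Diff}(\mathbb{B}^3)$ sending $\partial K$ into $\mathrm{Cont}(\mathbb{B}^3, \xi_{\mathrm{st}})$, the task is to homotope $f$ rel $\partial K$ into $\mathrm{Cont}(\mathbb{B}^3, \xi_{\mathrm{st}})$. Pulling back along $f$ yields a $K$-family of tight contact structures $\xi_k := f(k)^{*}\xi_{\mathrm{st}}$ which agrees with $\xi_{\mathrm{st}}$ over $\partial K$, and the problem reduces to producing a $K$-family of diffeomorphisms $\phi_k \in \mathrm{Diff}(\mathbb{B}^3)$, equal to the identity for $k \in \partial K$, with $\phi_k^{*}\xi_k = \xi_{\mathrm{st}}$ for all $k \in K$.

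Second, I would normalise the characteristic foliation on the boundary sphere. For each $k$, after an arbitrarily small perturbation, the sphere $\partial \mathbb{B}^3$ is convex with respect to $\xi_k$ and its characteristic foliation is divided by a single embedded circle. Using parametric convex surface theory in the spirit of \cite{fmp} and Theorem \ref{sumcontact2}, these characteristic foliations can be simultaneously standardised across $k \in K$, continuously and rel $\partial K$. After this step one may assume every $\xi_k$ restricts to the standard germ along $\partial \mathbb{B}^3$.

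Third, I would apply a parametric version of Eliashberg's uniqueness theorem for tight contact structures on $\mathbb{B}^3$ with fixed standard boundary germ: the space of such tight contact structures is contractible, and the contraction may be realised by an ambient isotopy through diffeomorphisms fixing the boundary. Applied to $\{\xi_k\}_{k \in K}$, this produces the family $\{\phi_k\}$ sought in the first step. The main obstacle I foresee is the second one: performing the boundary-foliation normalisation continuously over the entire parameter space $K$ while respecting the boundary condition on $\partial K$ requires genuine multi-parametric convex surface theory, controlling births and deaths of Legendrian tangencies in families. This is exactly the sort of statement made accessible by the methods of \cite{fmp}, and it is where the bulk of the work lies.
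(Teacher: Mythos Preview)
The paper does not prove this statement. It is quoted verbatim from Eliashberg--Mishachev \cite{EM} and used as a black box throughout (for instance in Lemmas \ref{lem:ContSphericalAnnulus} and \ref{lem:ReparametrizationLoop}, and in the proof of Theorem \ref{thm:MiniDisks}). There is therefore no ``paper's own proof'' to compare against.

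As for your sketch on its own terms, two points. First, your step 2 is superfluous: by the paper's conventions (spelled out right after the statement of the theorem), both $\mathrm{Diff}(\mathbb{B}^3)$ and $\mathrm{Cont}(\mathbb{B}^3,\xi_{\mathrm{st}})$ consist of maps fixing a neighbourhood of $\partial\mathbb{B}^3$. Hence each $\xi_k = f(k)^\ast\xi_{\mathrm{st}}$ already agrees with $\xi_{\mathrm{st}}$ near the boundary, and no foliation normalisation is needed. The obstacle you flag as the hard part simply is not there.

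Second, and more seriously, your step 3 is circular. By the fibration $\mathrm{Cont}(\mathbb{B}^3,\xi_{\mathrm{st}})\to\mathrm{Diff}(\mathbb{B}^3)\to\mathcal{C}(\mathbb{B}^3,\xi_{\mathrm{st}})$ from Lemma \ref{lem:Gray}, the Eliashberg--Mishachev theorem is \emph{equivalent} to the contractibility of the space of tight contact structures on $\mathbb{B}^3$ with fixed standard boundary germ. Calling this a ``parametric version of Eliashberg's uniqueness theorem'' and then invoking it is assuming exactly what you set out to prove. Likewise, appealing to the methods of \cite{fmp} would be circular in this paper's logical structure: as the authors note explicitly, the results of \cite{fmp} (and in particular Theorem \ref{thm:MiniDisks}) \emph{use} the Eliashberg--Mishachev theorem as an input, so they cannot be invoked to establish it.
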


Here $\mathrm{Cont}(\mathbb{B}^3,\xi )$ is the group of contactomorphisms of $Y$ fixing a neighbourhood of $\partial \mathbb{B}^3$, and likewise for the group of diffeomorphisms $\mathrm{Diff}(\mathbb{B}^3 )$. This result was claimed, without a complete proof, by Eliashberg in \cite{Eliashberg20}, where the 0-1 parametric case was treated. The complete proof recently appeared in \cite{EM}. To give some context, the analogous statement that $\mathrm{Diff}(\mathbb{B}^3 ) \rightarrow \mathrm{Homeo}(\mathbb{B}^3 )$ is a homotopy equivalence is equivalent to the Smale conjecture in dimension $3$, a deep result proved by Hatcher \cite{hatchersmale}. Then an argument due to Cerf \cite{cerf} shows that the Smale conjecture implies that $\mathrm{Diff}(Y) \rightarrow \mathrm{Homeo}(Y)$ is a homotopy equivalence for all $3$-manifolds. Thus, the exotic phenomena at the $\pi_0$-level which are exhibited in Theorems \ref{mainthm}-\ref{mainthm2} are in sharp contrast with the above and unexpected.

\begin{Remark}
We also note in passing that in four-dimensional symplectic topology the statement analogous to the $h$-principle of Eliashberg and Mishachev is false: for the standard symplectic $( \mathbb{R}^4 , \omega = dx\wedge dy + dz \wedge dw )$ the inclusion $$\mathrm{Symp}_{c}(\mathbb{R}^4  , \omega) \rightarrow \mathrm{Diff}_{c }(\mathbb{R}^4 )$$ is not a homotopy equivalence. This follows from Gromov's result on the contractibility of $\mathrm{Symp}_c (\mathbb{R}^4 , \omega )$ \cite{gromov} combined with Watanabe's recent disproof of the $4$-dimensional Smale Conjecture \cite{watanabe}.
\end{Remark}

\subsubsection{Gompf's contact Dehn twist}

We will see (\S \ref{dehnsection}) that the contact Dehn twist is well-defined on a (co-oriented) sphere $S \subset (Y,\xi )$ with a \textit{tight neighbourhood}. To the authors' knowledge, this contactomorphism was first considered by Gompf on the non-trivial sphere in the tight $S^1 \times S^2$, see \cite{gompf}. Gompf observed that $\tau_{S}$ and its iterates are not contact isotopic to the identity. Ding and Geiges \cite{DG10} later established that $\tau_{S}^2$ generates all smoothly trivial contact mapping classes (see also \cite{minlens}). Gironella \cite{Gironella} has recently studied higher dimensional analogues of Gompf's contactomorphism. However, all iterates of Gompf's $\tau_S$ and Gironella's generalisations happen to be \textit{formally non-trivial} already, and hence \textit{not} exotic. 

\subsubsection{Finite order exotic contactomorphisms}

The previously known exotic three-dimensional contactomorphisms have \textit{finite} order and the underlying $3$-manifolds have $b_1 \geq 3$. These were detected on torus bundles by Geiges and Gonzalo \cite{GG}, who used an essentially elementary argument to reduce the problem to the Giroux--Kanda classification of tight contact structures on $T^3$. This was reproved using contact homology by Bourgeois \cite{bourgeois}, who also found more exotic contactomorphisms in Legendrian circle bundles over surfaces of positive genus. In the latter case, those contactomorphisms have been shown to generate the group (\ref{Cont0}) by Geiges, Klukas \cite{GK14}, Giroux and Massot \cite{GM17}. Unlike the squared Dehn twists, these exotic contactomorphisms are all given by global symmetries. The paradigmatic example is the following:

\begin{Example}[\cite{GG,bourgeois}]
Consider the $3$-torus $T^3$ with the fillable contact structure $\xi_1 = \mathrm{Ker} \big( \cos \theta dx - \sin \theta dy \big)$. By passing to $n$-fold covers $T^3 \rightarrow T^3$, $(\theta, x, y ) \mapsto (n \theta , x, y )$ we obtain contact structures $\xi_n$ on $T^3$. By a classical result of Giroux and Kanda \cite{girouxT3,kandaT3} the contact structures $\xi_n$ ($n \geq 1$) are pairwise not contactomorphic and give all the tight contact structures on $T^3$. When $n \geq 2$ the deck transformations of the $n$-fold cover $T^3 \rightarrow T^3$ generate all the exotic contactomorphisms of $(T^3, \xi_n )$.  
\end{Example}

\subsubsection{Other Exotic Dehn twists}

Dehn twists have been a common source of exotic phenomena in topology:
\begin{enumerate}[label=(\alph*)]
    \item Let $Y_\# = Y_- \# Y_+$ be the sum of two aspherical $3$-manifolds $Y_\pm$. By a result of McCullough \cite{exotic3diff} (see also \cite{hatcherwahl}) it follows that the kernel of $\pi_0 \mathrm{Diff}(Y_\# ) \rightarrow \mathrm{Out} ( \pi_1 Y_\# )$ is $\cong \mathbb{Z}_2$, generated by the smooth Dehn twist on the separating sphere.

    \item Seidel \cite{seidellag} used Lagrangian Floer homology to detect exotic four-dimensional symplectomorphisms with infinite order in the symplectic mapping class group, given by squared Dehn twists on Lagrangian spheres. He later generalised these results to higher dimensions \cite{seidelgraded,seidelsequence}. See also the recent work of Smirnov \cite{smirnov1,smirnov2} using Seiberg-Witten gauge theory. 
    
    \item Kronheimer and Mrowka \cite{sumK3} have proved that the smooth Dehn twist on the separating sphere in the connected sum of two copies of the smooth $4$-manifold underlying a $K3$ surface is not smoothly isotopic to the identity, even if it is topologically. For this they employ the Bauer-Furuta homotopical refinement of the Seiberg-Witten invariants of $4$-manifolds. See also \cite{linK3}.

\end{enumerate}

\subsection{Sketch of the proof of Theorem \ref{mainthm}(A)}



We outline here a proof of Theorem \ref{mainthm}(A) which is simpler than the one we give in detail in the article. In particular, the proof that we present now does not yield the stronger conclusion that the class of $\tau_{S_\#}^2$ is non-trivial in the abelianisation of (\ref{Cont0}). We will need a stronger argument which uses Theorem \ref{sumcontact2} in order to deduce both this and Theorem \ref{mainthm2}.

The main ideas go as follows. First, we have a \textit{relative} version of the problem. Given a Darboux ball $B$ in a contact $3$-manifold $(Y, \xi )$ we have a contactomorphism given by a Dehn twist $\tau_{\partial B}$ performed on an exterior sphere parallel to $\partial B$. This contactomorphism fixes the ball $B$ and need not be contact isotopic to the identity \textit{relative} to $B$, even if it always is globally (not fixing the ball). The problem of whether the squared Dehn twist $\tau_{\partial B}^2$ is isotopic rel. $B$ to the identity can be essentially recast as a lifting problem involving families of contact structures: if $Y$ is aspherical (i.e. irreducible and with infinite fundamental group) then $\tau_{\partial B}^2$ is isotopic to the identity rel. $B$ precisely when the fibration given by the evaluation map $ev : \mathcal{C}(Y , \xi ) \rightarrow S^2$ admits a (homotopy) section (see Corollary \ref{criterionB}). We recall that $ev$ is defined by evaluating contact structures at a point. The key point that we exploit is that this fibration resembles a corresponding "evaluation map" pertaining the Seiberg-Witten gauge theory of the manifold $Y$, and which is closely related to the $U$ map in monopole Floer homology. As a result, an obstruction to the existence of a section was given by the second author in \cite{yo}: if $\mathbf{c}(\xi ) \notin \mathrm{Im}U$ then no (homotopy) section exists, and thus $\tau_{\partial B}^2$ isn't isotopic to the identity rel. $B$. 

Going back to the original problem, consider two \textit{tight} irreducible contact manifolds $(Y_\pm , \xi_\pm )$ and their sum $(Y_\#, \xi_\# )$. Let $ \mathrm{CEmb} \big( S^2 , (Y_\# , \xi_\# ) \big)_{S_\#}$ be the space of co-oriented \textit{convex} embeddings $S^2 \hookrightarrow (Y_\#, \xi_\# )$ with standard characteristic foliation, in the isotopy class of the separating sphere $S_\#$.
The group of contactomorphisms of $(Y_\#, \xi_\# )$ acts transitively on this space and yields a fibration\footnote{Strictly speaking, we should replace $\mathrm{Cont}(Y_\# , \xi_\# )$ with the subgroup consisting of contactomorphisms which preserve the isotopy class of the co-oriented sphere $S_\#$.} 
\begin{align}
    \mathrm{Cont}(Y_\#, \xi_\#, S_\# ) \rightarrow \mathrm{Cont}(Y_\#, \xi_\# ) & \rightarrow \mathrm{CEmb} \big( S^2 , (Y_\# , \xi_\# ) \big)_{S_\#}. \label{fib1} \\
    f & \mapsto f (S_\# ) \nonumber
\end{align}
From the long exact sequence of homotopy groups, a contactomorphism $f$ of $(Y_\#, \xi_\#)$ fixing the sphere $S_\#$ is contact isotopic to the identity (not necessarily fixing $S_\#$) precisely when it arises as the monodromy in (\ref{fib1}) of a loop of sphere embeddings. It thus becomes essential to understand the topology of the sphere embedding space. This brings us to the following $h$-principle type result, which asserts that the topological complexity of this space only comes from reparametrisations of the source:

\begin{Theorem}\label{embthm}
If $(Y_\pm , \xi_\pm )$ are irreducible and tight then the reparametrisation map provides a homotopy equivalence $\mathrm{U}(1) \xrightarrow{\simeq} \mathrm{CEmb}\big( S^2 , (Y_\# , \xi_\# ) \big)_{S_\#}$.
\end{Theorem}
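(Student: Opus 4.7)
\bigskip
\noindent\textbf{Proof proposal for Theorem \ref{embthm}.}

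The plan is to realize $\mathrm{CEmb}\big(S^2, (Y_\#, \xi_\#)\big)_{S_\#}$ as the total space of a principal $\mathrm{U}(1)$-bundle over a weakly contractible base, with the $\mathrm{U}(1)$-fiber being precisely the reparametrization orbit of $S_\#$.

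\emph{Reduction to unparametrized convex spheres.} The group $G \cong \mathrm{U}(1)$ of orientation-preserving diffeomorphisms of $S^2$ preserving the standard characteristic foliation acts freely on $\mathrm{CEmb}\big(S^2, (Y_\#, \xi_\#)\big)_{S_\#}$ by precomposition, with quotient the space $\mathcal{S}$ of \emph{unparametrized} convex spheres in $(Y_\#, \xi_\#)$ smoothly isotopic to $S_\#$ whose characteristic foliation is diffeomorphic to the standard one. The reparametrization map in Theorem \ref{embthm} is the inclusion of the fiber over $[S_\#]$ in the resulting principal bundle
\[
\mathrm{U}(1) \to \mathrm{CEmb}\big(S^2, (Y_\#, \xi_\#)\big)_{S_\#} \to \mathcal{S}.
\]
Hence it suffices to prove that $\mathcal{S}$ is weakly contractible.

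\emph{Path-connectedness of $\mathcal{S}$.} Given $\Sigma \in \mathcal{S}$, the sphere separates $(Y_\#, \xi_\#)$ into two contact manifolds with standard convex boundary. By smooth irreducibility of $Y_\pm$, the two halves are smoothly $Y_\pm \setminus (\text{open ball})$, and tightness combined with Colin's uniqueness of the contact prime decomposition \cite{colin} identifies the induced contact structures with $\xi_\pm$ restricted to $Y_\pm \setminus B_\pm$. An ambient contact isotopy realizing this identification carries $\Sigma$ to $S_\#$, yielding $\pi_0(\mathcal{S}) = \ast$.

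\emph{Higher homotopy via Theorem \ref{sumcontact2}.} For $\pi_k(\mathcal{S}) = 0$ with $k \geq 1$, I would enrich each $\Sigma \in \mathcal{S}$ by a Darboux ball $B$ whose boundary is a convex approximation of $\Sigma$, producing a pair $(\Sigma, B)$; the space of such fillings is contractible, so this enrichment is a homotopy equivalence. Via the multi-parametric convex-surface $h$-principle of \cite{fmp}, families of such pairs are classified by families of pairs $(\xi, B_\#)$ with $\xi$ in the component of $\xi_\#$ and $B_\# \subset Y_\#$ a fixed Darboux ball; that is, by the space $\mathcal{C}(Y_\#, \xi_\#, B_\#)$. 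Theorem \ref{sumcontact2} then identifies this with the product $\mathcal{C}(Y_-, \xi_-, B_-) \times \mathcal{C}(Y_+, \xi_+, B_+)$. An argument using the action of the full contactomorphism group $\mathrm{Cont}(Y_\#, \xi_\#)$ absorbs the topology of each factor: parametric families of contact structures on an irreducible tight summand can be realized by ambient contact isotopies, which move a parametric family of spheres back to $S_\#$.

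\emph{Main obstacle.} The key technical difficulty is the combination of the parametric convex-surface $h$-principle of \cite{fmp}, used to replace families of convex spheres by families of contact structures with a fixed Darboux ball, with the use of tightness and irreducibility to trivialize the resulting parametric families on the irreducible summands. Verifying that this absorption leaves behind only the $\mathrm{U}(1)$-reparametrization factor, and carefully identifying the homotopy fiber of the enrichment $(\Sigma \mapsto (\Sigma, B))$ as contractible, constitute the bulk of the work.
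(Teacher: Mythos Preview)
Your reduction to showing that the space $\mathcal{S}$ of unparametrized standard convex spheres is weakly contractible is a reasonable starting point, and the $\pi_0$ argument via Colin is fine. However, the higher-homotopy step has a genuine gap. The ``enrichment'' of $\Sigma \in \mathcal{S}$ by a Darboux ball $B$ whose boundary approximates $\Sigma$ is not well-defined: the separating sphere $\Sigma$ does not bound a ball in $Y_\#$, so there is no such Darboux ball to attach. More seriously, the claimed passage from families of convex spheres to the space $\mathcal{C}(Y_\#, \xi_\#, B_\#)$ of contact structures is not what the $h$-principle of \cite{fmp} provides --- that result compares standard convex spheres to \emph{smooth} spheres, not to contact structures --- and you never actually explain what map realizes this identification. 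The final ``absorption'' step is likewise only a sketch: you would need to know something about the homotopy type of $\mathrm{Cont}(Y_\pm, \xi_\pm)$ (or equivalently $\mathcal{C}(Y_\pm, \xi_\pm)$) to trivialize those factors, and no such input is available in general. Invoking Theorem~\ref{sumcontact2} does not help here, since that theorem describes the homotopy type of $\mathcal{C}(Y_\#, \xi_\#, B_\#)$ as a product, not as something contractible.

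The paper's proof bypasses all of this. It uses the $h$-principle of \cite{fmp} in the form of Theorem~\ref{thm:LinksSpheres}, which identifies the relative homotopy groups $\pi_k\big(\mathrm{Emb}(S^2, Y_\#), \mathrm{CEmb}(S^2, (Y_\#, \xi_\#))\big)$ with $\pi_k(\mathrm{SO}(3), \mathrm{U}(1))$ via reparametrization. Combined with Hatcher's theorem \cite{S1xS2} that for a connected sum of two irreducible $3$-manifolds the reparametrization map $\mathrm{SO}(3) \to \mathrm{Emb}(S^2, Y_\#)_{S_\#}$ is a homotopy equivalence, a simple diagram chase gives the result immediately. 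The missing ingredient in your approach is precisely this comparison to the \emph{smooth} embedding space and Hatcher's input; Theorem~\ref{sumcontact2} is not needed and plays no role.
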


In the smooth case, the result analogous to the above was proved by Hatcher \cite{S1xS2}. The proof of Theorem \ref{embthm} rests on $h$-principle for standard convex spheres established by the first author together with J. Martínez-Aguinaga and F. Presas \cite{fmp}, and should be regarded as an application of the $h$-principle of Eliashberg and Mishachev \cite{EM}.

With these ingredients in place, the proof of Theorem \ref{mainthm}(A) goes as follows. The monodromy in (\ref{fib1}) over the standard loop in $\mathrm{U}(1)$ is given by the product of Dehn twists $\tau_{\partial B_-} \tau_{\partial B_+}$ (see Lemma \ref{reparamLemma}). The contact Dehn twist $\tau_{S_\#}$ agrees with the image of $\tau_{\partial B_-}$ in $\pi_0 \mathrm{Cont}(Y_\#, \xi_\# )$. Because the manifolds $(Y_\pm , \xi_\pm )$ have infinite order contact Dehn twists $\tau_{\partial B_\pm}$ rel. $B_\pm$, then for all $k \geq 1$ the class $\tau_{\partial B_-}^k \in \pi_0 \mathrm{Cont}(Y_\#, \xi_\#, S_\# )$ is not an iterate of $\tau_{\partial B_-} \tau_{\partial B_+}$ or its inverse. It follows that $\tau_{S_\#}$ and its iterates are not contact isotopic to the identity in $(Y_\#, \xi_\# )$.

\subsection{Outline}

The structure of the article is as follows. In \S \ref{backgroundsection} we introduce notation and present background material. In \S \ref{dehnsection} we define the contact Dehn twist, establish various key properties and present examples where it is isotopic to the identity. In \S \ref{monopolesection} we provide background on the families version of the Kronheimer--Mrowka contact invariant introduced in \cite{yo}, which will be one of the main ingredients in the proofs of our main results. In \S \ref{spheressection} we review the $h$-principle for families of convex spheres in tight contact $3$-manifolds established in \cite{fmp}. In \S \ref{connectedsumsection} we use this $h$-principle to establish Theorem \ref{sumcontact2}. We then complete the proofs of Theorems \ref{mainthm} and \ref{mainthm2}. In \S \ref{OTsection} we deduce exotic $1$-parametric phenomena in overtwisted contact $3$-manifolds.

\subsubsection*{Acknowledgements} The first named author would like to acknowledge his advisor Francisco Presas for valuable conversations. The second named author thanks his advisor Francesco Lin for his support and encouragement, together with Hyunki Min for useful coversations. We would also like to thank the anonymous referee for the careful suggestions from which this manuscript has greatly benefited. The second author was partially supported by NSF grant DMS-2203498.

\section{Background}

This section introduces the main players in this article: spaces of contact structures, contactomorphisms, embeddings, etc.

\begin{Remark} For convenience, throughout this article by a "fibration" we will mean a "Serre fibration". By a "homotopy equivalence" we will mean a "weak homotopy equivalence". However, the latter distinction isn't important: the various infinite dimensional spaces that we consider are Fréchet manifolds, hence they have the homotopy type of countable CW complexes \cite{palaisinfinite,milnorcw} and Whitehead's Theorem applies.
\end{Remark}

\label{backgroundsection}

\subsection{Notation}
Let $(Y , \xi )$ be a closed contact $3$-manifold. We always assume $Y$ is connected and oriented, and $\xi$ co-oriented and positive. Occasionally we will allow $Y$ to be compact with non-empty boundary, in which case we assume that $\partial Y$ is \textit{convex} for the contact structure $\xi$ and we fix a collar neighbourhood $C = (-1,0] \times \partial Y$ of $\partial Y$. We quickly introduce here some of the spaces that will be relevant in the article, all of which are equipped with the Whitney $C^\infty$ topology:
\begin{itemize}

\item We denote by $\mathrm{Emb}\big( \mathbb{B}^3 , Y \big) $ the space of orientation-preserving smooth embeddings $\phi : \mathbb{B}^3 \hookrightarrow Y$ of the closed unit ball (avoiding the closure of $C$, if $\partial Y \neq \emptyset$). Let $\mathrm{Emb}\big( (\mathbb{B}^3 , \xi_{\mathrm{st}} ) , (Y, \xi ) \big)$ be the subspace consisting of contact embeddings of the standard contact unit ball. Such embeddings will be referred to as \textit{Darboux balls} in $(Y , \xi )$. Darboux's theorem asserts that for any interior point $p$ of a contact manifold we may find such $\phi$ with $\phi (0 ) = p$. We will often incur in abuse of notation by referring to a Darboux ball only by its image $B := \phi ( \mathbb{B}^3 )$.

\item We denote by $\mathrm{Diff}(Y)$ the group of orientation-preserving diffeomorphisms, and by $\mathrm{Diff} (Y , B)$ the subgroup consisting of those which fix a Darboux ball $B$ pointwise. By $\mathrm{Diff}_0 (Y)$ and $\mathrm{Diff}_0 (Y , B)$ we denote the subgroups consisting of those which are smoothly isotopic to the identity (rel. $B$ in the second case). We denote by $\mathrm{Cont}(Y) \subset \mathrm{Diff}$ the subgroup of co-orientation preserving contactomorphisms of $(Y ,\xi)$, and by $\mathrm{Cont} (Y , B)$ the subgroup consisting of those which fix a Darboux ball $B$ pointwise. By $\mathrm{Cont}_0 (Y)$ and $\mathrm{Cont}_0 (Y , B)$ we denote the subgroups consisting of those which are \textit{smoothly} isotopic to the identity (rel. $B$ in the second case). 

\item We denote by $\mathcal{C}(Y , \xi )$ the space of contact structures on $Y$ in the path-component of $\xi$. When $\partial Y \neq \emptyset$ then we also require that they agree with $\xi$ over $C$. Given a Darboux ball $B $ in $(Y , \xi )$ we denote by $\mathcal{C}(Y , \xi , B )$ the subspace consisting of contact structures $\xi^\prime$ for which the coordinate ball $B$ is a Darboux ball for $(Y , \xi^{\prime} )$ (i.e. $\xi = \xi^\prime$ over $B$).

\item We denote by $\mathrm{Fr}(Y)$ the principal $(\mathrm{SO}(3) \simeq ) \mathrm{GL}^{+}(3)$-bundle over $Y$ of oriented frames in $TY$, and by $\mathrm{CFr}(Y)$ the principal $(\mathrm{U}(1) \simeq) \mathrm{CSp}^+(2, \mathbb{R})$-bundle over $Y$ of co-oriented frames in $\xi$. Here, $\mathrm{CSp}^+(2, \mathbb{R})$ denotes the linear conformal-symplectomorphism group. By the smooth and contact versions of the Disk Theorem\footnote{The key point in the contact case is that $\varphi_t(x,y,z) :=(tx,ty,t^2z)$ is a contactomorphism of $(\mathbb{R}^3,\xi_{\mathrm{st}})$ for every $t>0$, so the proof in the contact case follows along the same lines as in the smooth case (see \cite{geiges}, Theorem 2.6.7).} we have homotopy equivalences 
\begin{align}
     \mathrm{Emb}(\mathbb{B}^3 , Y ) & \xrightarrow{\simeq} \mathrm{Fr}(Y)  \label{embfr}\\
     \phi & \mapsto (d\phi )_0 (e_1 , e_2 , e_3 ) \nonumber\\
     \mathrm{Emb}( (\mathbb{B}^3 , \xi_{\mathrm{st}}) , (Y , \xi ) )& \xrightarrow{\simeq} \mathrm{CFr}(Y, \xi )\nonumber\\
     \phi & \mapsto (d \phi )_0 (e_1 , e_2 ).\nonumber
\end{align}
Notice that $\mathrm{Fr}(Y)\simeq Y \times \mathrm{SO}(3)$ and, when the Euler class of $\xi$ vanishes, $\mathrm{CFr}(Y, \xi ) \simeq Y \times \mathrm{U}(1)$.
\item We denote by $\mathrm{Emb}(S^2 , Y )$ the space of co-oriented embeddings of $2$-spheres. By $\mathrm{CEmb}\big( S^2 , (Y, \xi ) \big)$ we denote the subspace consisting of \textit{convex} embeddings with \textit{standard characteristic foliation} ("standard convex spheres" in short). Recall that a surface $\Sigma \subset (Y, \xi )$ is convex \cite{convexite}\cite{geiges} if there exists a contact vector field on a neighbourhood which is transverse to $\Sigma$. The standard characteristic foliation on $S^2$ is that induced from its embedding as the boundary of the Darboux ball. 

\item We denote by $\mathrm{Cont}(Y, \xi , S)$ the subgroup of contactomorphisms which fix a standard convex sphere $S$ pointwise, and likewise for $\mathrm{Diff}(Y, S )$.
\end{itemize}

\subsection{Standard fibrations}\label{fibrations}

Next, we review how the spaces introduced above relate to each other through various natural fibrations. Some of the material from this section is treated in \cite{GM17} in greater detail.

\subsubsection{Diffeomorphisms acting on contact structures}
By an application of Gray's stability Theorem (a.k.a Moser's argument) \cite{geiges} with parameters one can show
\begin{Lemma}\label{lem:Gray} The action $f \mapsto f_\ast \xi$ of the group of diffeomorphisms on a fixed contact structure $\xi$ gives a fibration
\begin{align}
\mathrm{Cont}_0 (Y, \xi ) \rightarrow \mathrm{Diff}_0 (Y ) \rightarrow \mathcal{C}(Y , \xi ).\label{moser}
\end{align}
Similarly, there is fibration 
\begin{align}
\mathrm{Cont}_0 (Y, \xi , B ) \rightarrow \mathrm{Diff}_0 (Y , B ) \rightarrow \mathcal{C}(Y , \xi , B ). \label{moserB}
\end{align}
\end{Lemma}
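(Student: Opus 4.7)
The plan is to deduce both fibrations from a parametric version of Gray's stability theorem, and then identify the fibers. The main input is the following parametric statement: given any continuous family $\{\xi_k\}_{k \in K}$ of contact structures on $Y$ parametrised by a compact space $K$ with basepoint $k_0$, there exists a continuous family $\{\phi_k\}_{k \in K}$ in $\mathrm{Diff}_0(Y)$ with $\phi_{k_0} = \mathrm{id}$ and $(\phi_k)_\ast \xi_{k_0} = \xi_k$. The construction follows the usual recipe: pick a smooth family of contact forms $\alpha_k$, pick continuously a path from $k_0$ to $k$ using a local triangulation of $K$ and a partition of unity, solve the Moser equation $\iota_{X_{k,t}} d\alpha_{k,t} + \dot\alpha_{k,t} = \mu_{k,t}\,\alpha_{k,t}$ along this path for a vector field $X_{k,t}$ tangent to $\xi_{k,t}$, and take $\phi_k$ to be its time-$1$ flow; closedness of $Y$ ensures the flow exists.

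Given this statement, the Serre fibration property of (\ref{moser}) is automatic. For a lifting problem $F: D^n \times I \to \mathcal{C}(Y,\xi)$ with initial lift $\tilde{f}_0 : D^n \to \mathrm{Diff}_0(Y)$ satisfying $(\tilde{f}_0(x))_\ast \xi = F(x,0)$, I would apply the parametric statement to the family $\{F(x,s)\}$ with $K = D^n \times I$ and basepoints $(x,0)$ to produce diffeomorphisms $\psi_{x,s}$ with $\psi_{x,0} = \mathrm{id}$ and $(\psi_{x,s})_\ast F(x,0) = F(x,s)$. The desired lift is $\tilde{f}(x,s) := \psi_{x,s} \circ \tilde{f}_0(x)$. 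Taking $D^n$ to be a point also yields surjectivity of the action onto $\mathcal{C}(Y,\xi)$, since the latter is defined as the path-component of $\xi$. The fiber over $\xi$ then consists of $f \in \mathrm{Diff}_0(Y)$ with $f_\ast \xi = \xi$ as co-oriented plane fields, which is precisely $\mathrm{Cont}_0(Y,\xi)$ by definition.

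For the relative fibration (\ref{moserB}) the same argument applies, but now I must refine the parametric statement so that the isotopy $\phi_k$ fixes $B$ pointwise whenever the family $\{\xi_k\}$ agrees with $\xi$ on $B$ for every $k$. This is where the main (modest) technical point lies: I would choose the contact forms $\alpha_k$ to coincide with a fixed $\alpha$ on $B$ for every $k$, so that $\dot\alpha_{k,t} \equiv 0$ over $B$ along any path in $K$. Then the Moser equation can be solved by a vector field that vanishes identically on $B$, whose flow fixes $B$ pointwise. Parameter-continuity of this choice of forms and paths is once again handled by partitions of unity on $K$, and integrating produces the required family inside $\mathrm{Diff}_0(Y,B)$ mapping onto $\mathcal{C}(Y,\xi,B)$ with fiber $\mathrm{Cont}_0(Y,\xi,B)$.
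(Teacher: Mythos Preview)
Your argument is correct and is exactly the approach the paper indicates: the paper gives no detailed proof, merely citing ``Gray's stability Theorem (a.k.a.\ Moser's argument) with parameters'', and you have filled in those details appropriately. One minor caveat worth flagging: the general parametric statement in your first paragraph (a continuous global trivialisation $\phi_k$ over an arbitrary compact $K$ with $\phi_{k_0}=\mathrm{id}$) is false as written for non-contractible $K$---if it held for $K=S^1$, say, the fibration would have no monodromy---but this does not affect your proof, since in the second and third paragraphs you only invoke it for the contractible parameter space $D^n\times I$, where the path $t\mapsto(x,ts)$ is canonical and the Moser flow does the job.
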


By (\ref{moser}), understanding the homotopy type of the space of contact structures $\mathcal{C}(Y , \xi )$ and the group of contactomorphisms $\mathrm{Cont}_0 (Y , \xi )$ is essentially equivalent, since the homotopy type of $\mathrm{Diff}_0 (Y )$ is often well-understood (e.g. for all prime $3$-manifolds by now).

\subsubsection{Contactomorphisms acting on Darboux balls}
By an application of the contact isotopy extension Theorem \cite{geiges} with parameters we have
\begin{Lemma} The action $f \mapsto f(B)$ of the group of contactomorphisms on a fixed Darboux ball $B \subset Y$ gives a fibration
\begin{align}
\mathrm{Cont} ( Y , \xi , B ) \rightarrow \mathrm{Cont} (Y, \xi ) \rightarrow \mathrm{Emb}( (\mathbb{B}^3 , \xi_{\mathrm{st}}) , (Y , \xi ) ). \label{contball}
\end{align}
Similarly, there is a fibration
\begin{align}
 \mathrm{Diff} ( Y  , B ) \rightarrow \mathrm{Diff} (Y ) \rightarrow \mathrm{Emb}( \mathbb{B}^3 ,  Y  ). \label{diffball}
 \end{align}
\end{Lemma}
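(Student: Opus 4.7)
The plan is to verify the Serre fibration property, which for our purposes means checking the homotopy lifting property against disks $D^n$. I would first fix a parametrisation $\iota_B \colon (\mathbb{B}^3,\xi_{\mathrm{st}})\hookrightarrow (Y,\xi)$ with image $B$, and rewrite the map of the Lemma as the evaluation $\mathrm{ev}_B \colon \mathrm{Cont}(Y,\xi)\to \mathrm{Emb}((\mathbb{B}^3,\xi_{\mathrm{st}}),(Y,\xi))$ given by $f\mapsto f\circ \iota_B$. The fibre over $\iota_B$ is then precisely $\mathrm{Cont}(Y,\xi,B)$, as required. Note that since the claim is a \emph{fibration} rather than a fibre bundle, we only need lifts to exist locally in the disk parameter, not globally, so we never need the map $\mathrm{ev}_B$ to be surjective.

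Given a lifting problem consisting of $\Phi\colon D^n\times[0,1]\to \mathrm{Emb}((\mathbb{B}^3,\xi_{\mathrm{st}}),(Y,\xi))$ together with a partial lift $\widetilde{\Phi}_0\colon D^n\to \mathrm{Cont}(Y,\xi)$ satisfying $\widetilde{\Phi}_0(s)\circ \iota_B=\Phi(s,0)$, I would first pre-reduce: by replacing $\Phi(s,t)$ with $\widetilde{\Phi}_0(s)^{-1}\circ \Phi(s,t)$ (and then post-composing the eventual lift by $\widetilde{\Phi}_0(s)$), one can assume $\widetilde{\Phi}_0\equiv \mathrm{id}$ and $\Phi(\cdot,0)\equiv \iota_B$. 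Thus one has a smooth $(n+1)$-parameter family of contact embeddings $\mathbb{B}^3\hookrightarrow Y$ that is the standard inclusion at $t=0$, and the task is to produce a smooth $(n+1)$-parameter family of contactomorphisms $\widetilde{\Phi}(s,t)\in \mathrm{Cont}(Y,\xi)$ with $\widetilde{\Phi}(s,0)=\mathrm{id}$ and $\widetilde{\Phi}(s,t)\circ \iota_B=\Phi(s,t)$. This is precisely the output of the parametric contact isotopy extension theorem (Theorem 2.6.12 in Geiges): the time-derivative in $t$ of $\Phi(s,t)$ defines a time- and $s$-dependent contact vector field along the image, which one extends via a cutoff supported near a fixed compact neighbourhood of $\iota_B(\mathbb{B}^3)$ to a smoothly varying contact Hamiltonian on all of $Y$; the resulting $(s,t)$-family of time-one flows is the sought-after lift.

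The smooth statement \eqref{diffball} follows by an identical argument with "contact" replaced by "smooth" throughout, invoking the parametric smooth isotopy extension theorem instead. The main technical point to be careful about is the \emph{parametric} nature of the isotopy extension: the usual textbook statements are for single isotopies, but the proofs go through verbatim for smooth $D^n$-families because the construction of the generating contact Hamiltonian and the chosen cutoff depend only on a compact neighbourhood of $\iota_B(\mathbb{B}^3)$ and vary smoothly with the input data. I would either cite the parametric version directly (as in \cite{GM17}) or sketch the cutoff/flow construction explicitly to confirm smooth dependence on $s\in D^n$; this is the only nontrivial step and should be carried out once and reused for the subsequent analogous fibrations in the article.
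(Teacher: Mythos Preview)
Your proposal is correct and follows essentially the same approach as the paper: the paper simply states that the lemma follows ``by an application of the contact isotopy extension Theorem \cite{geiges} with parameters'', and you have spelled out precisely what that application entails (the reduction to $\widetilde{\Phi}_0\equiv\mathrm{id}$, the construction of the lift via the contact Hamiltonian and cutoff, and the smooth dependence on the disk parameter). Your write-up is in fact more detailed than the paper's, which leaves this as a one-line citation.
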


\subsubsection{Evaluation of contact structures at a point}

Fix a Darboux ball $B \subset Y$ with center $0 \in Y$. By regarding the $2$-sphere $S^2$ as the space of co-oriented planes in the tangent space $T_0 B$ we obtain the \textit{evaluation map}
\begin{align}
ev_B : \mathcal{C}(Y , \xi )  \rightarrow S^2 \quad , \quad
\xi^{\prime}  \mapsto \xi^{\prime} (0). \label{ev}
\end{align}

The following result is well-known but we provide a proof:

\begin{Lemma}\label{evfib}
The evaluation map (\ref{ev}) is a fibration. The inclusion $\mathcal{C}(Y , \xi , B ) \rightarrow (ev_B )^{-1} (\xi (0 ) ) $ is a homotopy equivalence.
\end{Lemma}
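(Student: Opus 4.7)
Plan. The two claims are handled separately.

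\textbf{Fibration.} I prove that $ev_B$ is a locally trivial fiber bundle, hence a Serre fibration. The rotation group $\mathrm{SO}(3)$ acts transitively on $S^2$ (viewed, via the Darboux coordinates on $B$, as the space of co-oriented planes in $T_0 Y$). Using a bump function supported on $B$, one constructs a smooth family $\{\Phi_A\}_{A \in V}$ of diffeomorphisms of $Y$ supported near $B$, parametrized by a neighborhood $V$ of $\mathrm{Id} \in \mathrm{SO}(3)$, with $\Phi_{\mathrm{Id}} = \mathrm{id}$ and $(d\Phi_A)_0 = A$ in Darboux coordinates. Picking a local section $s: U \to V$ of the orbit map $\mathrm{SO}(3) \to S^2, A \mapsto A \cdot \xi(0)$ over a neighborhood $U$ of $\xi(0) \in S^2$, the assignment
$$ U \times ev_B^{-1}(\xi(0)) \longrightarrow ev_B^{-1}(U), \qquad (P, \xi') \longmapsto (\Phi_{s(P)})_* \xi' $$
is a local trivialization: the computation $ev_B\bigl((\Phi_{s(P)})_* \xi'\bigr) = (d\Phi_{s(P)})_0\, \xi'(0) = s(P) \cdot \xi(0) = P$ shows it lands in the correct fiber, with continuous inverse $\xi'' \mapsto \bigl(ev_B(\xi''), (\Phi_{s(ev_B(\xi''))})^* \xi''\bigr)$.

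\textbf{Homotopy equivalence.} I show the inclusion $\iota: \mathcal{C}(Y,\xi,B) \hookrightarrow F := ev_B^{-1}(\xi(0))$ is a weak equivalence by proving $\pi_n(F, \mathcal{C}(Y,\xi,B)) = 0$ for every $n \geq 0$. Given a continuous family $\{\xi'_t\}_{t \in D^n} \subset F$ with $\xi'_t = \xi$ on $B$ for $t \in \partial D^n$, I construct a homotopy, rel $\partial D^n$, deforming the family into $\mathcal{C}(Y,\xi,B)$. Step one applies Darboux's theorem with parameters: since $\xi'_t(0) = \xi(0)$ for all $t$, a parametric Moser argument on a straight-line interpolation of contact forms near $0$ produces a smooth family of contactomorphisms $\psi_t$ of $Y$, supported near $B$, fixing $0$, and equal to the identity whenever $\xi'_t$ already agrees with $\xi$ near $0$ (in particular for $t \in \partial D^n$), such that $(\psi_t)_* \xi'_t = \xi$ on some Darboux ball $B_\epsilon \subset B$. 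Step two applies an isotopy built from the contact scaling $\varphi_s(x,y,z) := (sx, sy, s^2 z)$, a contactomorphism of $(\mathbb{R}^3, \xi_{\mathrm{st}})$, to expand the region of agreement from $B_\epsilon$ to all of $B$, again rel $\partial D^n$. The composition of these two deformations is the required homotopy.

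The main obstacle lies in the parametric Moser step: one must ensure that the resulting family of contactomorphisms depends smoothly on $t$ \emph{and} equals the identity on the locus $\partial D^n$ where $\xi'_t = \xi$ is already in standard form. This follows from the fact that the Moser generating vector field vanishes wherever the interpolated contact structures coincide, which is the same mechanism behind the rel-boundary version of Lemma \ref{lem:Gray}. With this in hand, the two-step deformation is routine.
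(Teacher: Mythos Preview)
Your proposal is correct and follows essentially the same strategy as the paper. For the fibration claim, the paper directly verifies the homotopy lifting property by lifting a homotopy $[0,1]\times\mathbb{B}^j\to S^2$ through $\mathrm{SO}(3)\to S^2$, differentiating to a time-dependent vector field, cutting it off inside $B$, and pushing forward $\xi_u$ by the resulting flow; your local-trivialisation argument packages the same $\mathrm{SO}(3)$-plus-cutoff construction slightly differently. For the homotopy equivalence, the paper applies parametric Darboux to produce a family of Darboux balls $\phi_u$ with $\phi_u(0)=0$, $(d\phi_u)_0=\mathrm{id}$, and then invokes the smooth disk theorem $\mathrm{Emb}(\mathbb{B}^3,Y)\simeq\mathrm{Fr}(Y)$ to isotope these to the fixed ball $B$; your two-step scheme (parametric Moser to get agreement on $B_\epsilon$, then contact scaling to enlarge) is the same argument with the disk theorem unpacked, since the contact scaling is precisely the mechanism behind the contact disk theorem (cf.\ the paper's footnote). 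One small remark: in your Step~2 the deformation by a $t$-independent cut-off scaling is, strictly speaking, a homotopy of maps of pairs $(D^n,\partial D^n)\to(F,\mathcal{C}(Y,\xi,B))$ rather than a homotopy rel $\partial D^n$, but that is exactly what is needed to kill $\pi_n(F,\mathcal{C}(Y,\xi,B))$.
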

\begin{proof}
Let $\mathbb{B}^j$ be the unit $j$-disk and consider a homotopy $[0,1] \times \mathbb{B}^j \rightarrow S^2$, $(t,u) \mapsto \sigma_{t , u}$, together with a lift of the time zero map $\{0\} \times \mathbb{B}^j \rightarrow \mathcal{C}(Y , \xi )$, $u \mapsto \xi_u$ i.e. at the point $0 \in B$ we have $\xi_u (0 ) = \sigma_{0,u}$. We must find a family of contact structures $\xi_{t,u}$ with $\xi_{t,u}(0 ) = \sigma_{t,u}$ and $\xi_{0,u} = \xi_u$.

Let $v_{t,u} \in S(T_0 B ) = S^2$ be the unit normal (with respect to the standard flat metric on $B$) to the plane $\sigma_{t,u}$. Since the action of $\mathrm{SO}(3)$ on $S^2$ gives a fibration $\mathrm{SO}(3) \rightarrow S^2$, $A \mapsto A e_3 $, then we may find $A_{t,u} \in \mathrm{SO}(3)$ such that $A_{t , u}e_3 = v_{t,u}$. Differentiating $A_{t,u}$ in $t$ we get a vector field on $V_{t,u}$ on $\mathbb{R}^3$. After cutting off $V_{t,u}$ outside the unit ball $B \subset Y$ we regard $V_{t,u}$ as an $u$-family of $t$-dependent vector fields on $Y$ whose associated flows (starting at time $t = 0$) we denote $\phi^{t}_{u}$. We obtain contact structures $\xi_{t,u} := (\phi^{t}_{u})_\ast \xi_u$ with the desired property, which in fact agree with $\xi$ outside $B \subset Y$.

For the second part, let $\xi_u = \mathrm{Ker} \alpha_u$ be a family of contact structures parametrised by a disk $ \mathbb{B}^j \ni u$ so that $\xi_u (0) = \xi (0) $ for all $u\in \mathbb{B}^j$ and $\xi_u(p)=\xi(p)$ for all $(u,p)\in \partial \mathbb{B}^j\times B$. We must deform relative to $\partial \mathbb{B}^j$ this family of contact structures to another family which agrees with $\xi$ over the Darboux ball $B$. Denote by $i:\mathbb{B}^3\hookrightarrow Y$ the inclusion of $B=i(\mathbb{B}^3)$. By the parametric version of Darboux's Theorem 
we obtain a family of disk embeddings $\phi_u : \mathbb{B}^3 \hookrightarrow Y$, which are Darboux balls for $\xi_u$, such that $\phi_u (0) = 0 \in B$ and $(d\phi_u )_0 = \mathrm{id}$ for all $u\in \mathbb{B}^j$; and $\phi_u=i$  for all $u\in \partial \mathbb{B}^j$. By (\ref{embfr}) we may deform the family of embeddings $\phi_u$ to the inclusion $i$ relative to $\partial \mathbb{B}^j$, and this deformation may be followed by an isotopy $f_{u,t}\in \mathrm{Diff}(Y)$, $(u,t)\in \mathbb{B}^j\times [0,1]$, with $f_{u,t}=\mathrm{id}$ for all $(u,t)\in \mathbb{B}^j\times \{0\}\cup\partial \mathbb{B}^j\times [0,1]$. The homotopy of contact structures $(f_{u,t})_\ast \xi_u$, $(u,t)\in \mathbb{B}^j\times [0,1]$, solves the problem since $(f_{u,1})_\ast \xi_u$ agree with $\xi$ over $B$ for all $u\in \mathbb{B}^j$.
\end{proof}


\subsubsection{Contactomorphisms act on standard convex spheres}

Again, an application of the contact isotopy extension Theorem gives 

\begin{Lemma}
The action $f \mapsto f(S)$ of the group of contactomorphisms on a fixed standard convex sphere $S \subset Y$ gives a fibration
\begin{align}
    \mathrm{Cont}(Y, \xi , S ) \rightarrow \mathrm{Cont}(Y, \xi ) \rightarrow \mathrm{CEmb}\big( S^2 , (Y, \xi ) \big) \label{contsphere}
\end{align}
Similarly, there is a fibration
\begin{align}
 \mathrm{Diff} ( Y  , S ) \rightarrow \mathrm{Diff} (Y ) \rightarrow \mathrm{Emb}( S^2,  Y  ). \label{diffsphere}
 \end{align}
\end{Lemma}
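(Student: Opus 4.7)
The plan is to verify the Serre fibration property for both sequences by adapting the parametric isotopy extension arguments already used in the preceding fibrations, e.g.\ (\ref{contball}); I focus on the contact case since the smooth version is entirely analogous and classical. Fix a parametrization $\iota_S : S^2 \hookrightarrow Y$ of $S$, so that the action reads $f \mapsto f \circ \iota_S$ and the fiber over $\iota_S$ is exactly $\mathrm{Cont}(Y, \xi, S)$. To establish homotopy lifting, I would take a family $\iota_{t,u} : S^2 \hookrightarrow (Y, \xi)$ of standard convex embeddings parametrised by $(t,u) \in [0,1] \times \mathbb{B}^k$ together with an initial lift $\tilde{f}_{0,u} \in \mathrm{Cont}(Y, \xi)$ satisfying $\tilde{f}_{0,u} \circ \iota_S = \iota_{0,u}$, and produce a continuous family $\tilde{f}_{t,u} \in \mathrm{Cont}(Y,\xi)$ with $\tilde{f}_{t,u} \circ \iota_S = \iota_{t,u}$.

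The key step is to construct an ambient contact isotopy $g_{t,u}$ with $g_{0,u} = \mathrm{id}$ and $g_{t,u} \circ \iota_{0,u} = \iota_{t,u}$; then $\tilde{f}_{t,u} := g_{t,u} \circ \tilde{f}_{0,u}$ solves the problem. For this I would use that each standard convex sphere $\iota_{t,u}(S^2)$ admits a contact tubular neighborhood contactomorphic to the standard model on $(S^2 \times (-\epsilon, \epsilon), \xi_{\mathrm{std}})$, in which $\partial_z$ is a contact vector field transverse to every slice. Choosing such tubular models continuously in $(t,u)$, differentiating $\iota_{t,u}$ in $t$ inside each neighborhood yields a family of contact vector fields realising the motion of $\iota_{t,u}$, which after a cutoff outside a slightly smaller neighborhood extends to a globally defined family of contact vector fields on $Y$; integrating produces the desired $g_{t,u}$.

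The main obstacle is the continuous-in-parameters choice of contact tubular neighborhoods: for a single standard convex sphere this is a classical result of Giroux, whereas for a parametric family it follows from a partition-of-unity patching on $[0,1] \times \mathbb{B}^k$ combined with Moser's trick to reconcile overlapping models. Once this parametric step is in place, the remainder is just the contact isotopy extension theorem with parameters, exactly as in the proof underlying (\ref{contball}). The smooth fibration follows the same outline, replacing contact tubular neighborhoods and contact vector fields by their ordinary counterparts; alternatively one can quote the standard parametric isotopy extension theorem for embeddings of compact submanifolds directly.
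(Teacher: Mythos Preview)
Your proposal is correct and takes essentially the same approach as the paper: the paper simply states that the lemma follows from ``an application of the contact isotopy extension Theorem'' with parameters, and what you have written is a correct unpacking of that citation in the case of standard convex spheres (using the convex neighbourhood theorem to thicken $\iota_{t,u}$ to a family of contact embeddings, then cutting off the resulting contact Hamiltonians). One small point of phrasing: it is the thickened contact embeddings $I_{t,u}:S^2\times(-\epsilon,\epsilon)\hookrightarrow Y$ whose $t$-derivative yields contact vector fields, not $\iota_{t,u}$ itself; and the cutoff should be applied to the contact Hamiltonian rather than directly to the vector field, but this is clearly what you intend.
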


\begin{Remark}
The above statement isn't quite precise. For either (\ref{contsphere}) or (\ref{diffsphere}), the downstairs projection is not surjective in general, so strictly speaking we only have a fibration over a union of connected components of the right-hand side. We will make no further comment on this point from now on.
\end{Remark}

\subsection{Formal triviality and exoticness} \label{exotic}

Here we collect basic material that we need related to the notion of a formal contactomorphism. The material in this section should be well-known to experts but we did not find a convenient reference.
 
\subsubsection{Formal contact structures and contactomorphisms}

For a $3$-manifold $Y$, the flexible analogue\footnote{In general, if $Y$ has dimension $2n+1 \geq 3$ one should define $\Xi (Y , \xi )$ as the space of codimension $1$ hyperplane fields in $TY$ \textit{equipped with} a $\mathrm{U}(n)$ structure.} 
of a contact structure is a \textit{$2$-plane field} i.e. a codimension $1$ distribution $\xi \subset TY$. All $2$-planes in a $3$-manifold are assumed to be co-oriented from now on, as we've been assuming with contact structures. Let $\Xi(Y , \xi )$ denote the path-component of a fixed $2$-plane field $\xi$ in the space of all such. If $\xi$ is a contact structure we have a natural inclusion map $\mathcal{C}(Y , \xi ) \rightarrow \Xi (Y , \xi )$. The correct flexible analogue of a contactomophism is:

\begin{Definition} A \textit{formal contactomorphism} of $(Y ,\xi )$ (where $\xi$ is a $2$-plane field) is a pair $(f , \{ \phi^s \}_{0 \leq s \leq 1} )$ such that $f \in \mathrm{Diff}(Y)$ and $\{ \phi^s \}_{0 \leq s \leq 1}$ is a homotopy through vector bundle isomorphisms $\phi^s : TY \xrightarrow{\cong} f^\ast TY$ such that $\phi^0 = df$ and $\phi^1$ preserves the $2$-plane field $\xi$. 
\end{Definition}

Of course, the above notion can be generalised to an arbitrary $n$-manifold equipped with a reduction of structure group to a subgroup $G \subset \mathrm{GL}(n , \mathbb{R} )$. The group of formal contactomorphisms of $(Y , \xi )$ is denoted $\mathrm{FCont}(Y, \xi )$. When $\xi$ is a contact structure there is the obvious inclusion map $\mathrm{Cont} (Y , \xi ) \rightarrow \mathrm{FCont}(Y , \xi )$ given by $f \mapsto (f , df )$ (where $df$ denotes the constant homotopy at $df$).

A homotopy class in $\pi_j \mathrm{Cont}(Y , \xi )$ is said to be \textit{formally trivial} if it lies in the kernel of $ \pi_j \mathrm{Cont}(Y , \xi ) \rightarrow \pi_j \mathrm{FCont} (Y , \xi )$. If, in addition, such a homotopy class is non-trivial in $\pi_j \mathrm{Cont}(Y , \xi )$ then we call it \textit{exotic}. Similar terminology applies for families of contact structures.


\subsubsection{A flexible analogue of (\ref{moser})}

We introduce a flexible counterpart of the fibration (\ref{moser}). This is done via fibrant replacement of the map $\mathrm{Diff}_0 (Y) \rightarrow \Xi (Y, \xi )$ , $f \mapsto f^\ast \xi$. That is, we decompose this map as the composite of a homotopy equivalence $\mathrm{Diff}_0 (Y) \xrightarrow{\simeq} \mathrm{FDiff}_0 (Y)$ and a fibration $\mathrm{FDiff}_0 (Y) \rightarrow \Xi (Y, \xi )$. Here $\mathrm{FDiff}(Y)$ is the topological group which consists of pairs $(f , \{ \phi^t \}_{0 \leq t \leq 1} )$ where $f \in \mathrm{Diff}(Y)$ and $\{ \phi^t \}_{0 \leq t \leq 1}$ is a homotopy of vector bundle isomorphisms $\phi^t : TY \xrightarrow{\cong} f^\ast TY$ such that $\phi^0 = df$. By $\mathrm{FDiff}_0 (Y )$ we denote the identity component. Clearly the inclusion induces a homotopy equivalence $\mathrm{Diff}(Y) \simeq \mathrm{FDiff}(Y)$. Define a mapping
\begin{align}
    \mathrm{FDiff}_0 (Y ) & \rightarrow \Xi (Y , \xi ) \label{moserf} \\
    (f , \{\phi^t\} ) & \mapsto \phi^1 (\xi ) \nonumber
\end{align}
\begin{Lemma}\label{moserflemma}
Let $\xi$ be a $2$-plane field on a compact oriented $3$-manifold $Y$. Then the mapping (\ref{moserf}) is a fibration with fiber $\mathrm{FCont}_0 (Y , \xi )$. Thus, for a contact structure $\xi$ we have a commuting diagram of fibrations inducing a homotopy equivalence of total spaces
\[
\begin{tikzcd}
\mathrm{FCont}_0 (Y , \xi )  \arrow{r} & \mathrm{FDiff}_0 (Y ) \arrow{r} &  \Xi (Y , \xi )\\
\mathrm{Cont}_0 (Y , \xi )  \arrow{u} \arrow{r} & \arrow{u}{\simeq} \mathrm{Diff}_0 (Y ) \arrow{r} &  \arrow{u} \mathcal{C} (Y , \xi )\\
\end{tikzcd}
\]
\end{Lemma}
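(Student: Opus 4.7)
The strategy is to verify three statements in turn: the fiber identification, the fibration property, and the comparison with (\ref{moser}). First, unpacking the definition of (\ref{moserf}): its preimage at $\xi \in \Xi(Y,\xi)$ consists of pairs $(f, \{\phi^t\})$ with $\phi^0 = df$ and $\phi^1(\xi) = \xi$, which is precisely the data of a formal contactomorphism, so the fiber over $\xi$ is $\mathrm{FCont}_0(Y,\xi)$ (restricting to the identity component of $\mathrm{FDiff}$ automatically restricts to the identity component of $\mathrm{FCont}$).

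Next I would establish that (\ref{moserf}) is a Serre fibration. The key geometric input is that the Grassmannian $S^2$ of co-oriented $2$-planes in $\mathbb{R}^3$ is a homogeneous space for $\mathrm{GL}^+(3)$, so the pointwise action on a fixed $2$-plane is a fibration; applied parametrically over $Y$, this means that the map from the space $\mathrm{Iso}(TY)$ of vector bundle automorphisms of $TY$ to $\Xi(Y,\xi)$ sending $\phi \mapsto \phi(\xi)$ is a Serre fibration of section spaces. For the homotopy lifting problem: given a homotopy $\eta_s \in \Xi(Y,\xi)$ (with auxiliary parameters in an arbitrary compact disk) and a lift $(f_0, \{\phi^t_0\})$ at $s = 0$, I would keep the diffeomorphism part $f_s = f_0$ constant and use the fibration just described to lift $\eta_s$ to a family $\psi_s : TY \to f_0^\ast TY$ of vector bundle isomorphisms with $\psi_0 = \phi^1_0$ and $\psi_s(\xi) = \eta_s$. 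The lifted homotopy $\{\phi^t_s\}$ is then built by concatenating $\{\phi^t_0\}$ with the path $s^\prime \mapsto \psi_{s^\prime}$ (for $s^\prime \in [0,s]$) and reparametrizing in $t$ via cut-off functions, invoking the contractibility of the space of paths based at $df_0$ to match the given initial data exactly at $s = 0$.

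Finally, for the commuting diagram: the bottom row is (\ref{moser}) from Lemma \ref{lem:Gray} and the vertical maps are the obvious inclusions (sending $f$ to the pair $(f, df)$ with $df$ the constant homotopy, and viewing a contact structure as a $2$-plane field). To see that the middle vertical map is a homotopy equivalence, observe that the forgetful map $\mathrm{FDiff}(Y) \to \mathrm{Diff}(Y)$, $(f,\{\phi^t\}) \mapsto f$, is a fibration whose fiber at $f$ is the space of paths of vector bundle isomorphisms starting at $df$, which is contractible; our inclusion is a section of this fibration, hence itself a homotopy equivalence. I expect the main obstacle to be the clean execution of the parametric concatenation in the fibration step, where one must combine the reparametrization in $t$ with the dependence on $s$ via cut-off functions to produce the lift continuously and in a way compatible with the prescribed initial data---a routine but somewhat fiddly argument. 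Once this is in place, the rest of the proof is essentially formal.
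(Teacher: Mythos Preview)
The paper explicitly leaves the proof of this lemma as an exercise, so there is no argument to compare against; your outline is the standard approach and is correct. One small point: your parenthetical that restricting to $\mathrm{FDiff}_0$ automatically lands in the identity component of $\mathrm{FCont}$ is not obvious in general, but note that in the paper's conventions the subscript $0$ on $\mathrm{Cont}_0$ means \emph{smoothly} isotopic to the identity (i.e.\ $\mathrm{Cont}\cap\mathrm{Diff}_0$), so reading $\mathrm{FCont}_0$ analogously as $\mathrm{FCont}\cap\mathrm{FDiff}_0$ makes the fiber identification tautological.
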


We leave the proof of this Lemma as an exercise. It follows:

\begin{Corollary}\label{comp}
Let $(Y , \xi )$ be a contact $3$-manifold. If $\beta \in \pi_j \mathcal{C}(Y , \xi )$ is formally trivial, then so is its image in $\pi_{j-1} \mathrm{Cont}_0 (Y , \xi )$ under the connecting map of the fibration (\ref{moser}).
\end{Corollary}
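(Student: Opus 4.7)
The plan is to derive this as a direct consequence of the naturality of the long exact sequence of homotopy groups, applied to the commuting ladder of fibrations furnished by Lemma \ref{moserflemma}. No further geometric input is needed; the argument is a short diagram chase.

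Concretely, I would extract from that ladder the induced commuting square of homotopy groups
\[
\begin{tikzcd}
\pi_j \mathcal{C}(Y,\xi) \arrow{r}{\partial} \arrow{d}{\iota_\ast} & \pi_{j-1} \mathrm{Cont}_0(Y,\xi) \arrow{d} \\
\pi_j \Xi(Y,\xi) \arrow{r}{\partial^F} & \pi_{j-1} \mathrm{FCont}_0(Y,\xi),
\end{tikzcd}
\]
where $\iota : \mathcal{C}(Y,\xi) \hookrightarrow \Xi(Y,\xi)$ and the right-hand vertical map are the canonical inclusions, and $\partial, \partial^F$ denote the connecting homomorphisms of the two fibrations. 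Commutativity of the square is the standard naturality statement for the connecting map under a morphism of fibrations.

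To finish, the hypothesis that $\beta$ is formally trivial unpacks, by the definition introduced in \S \ref{exotic}, to the statement $\iota_\ast(\beta) = 0$. Hence $\partial^F \iota_\ast(\beta) = 0$, and commutativity of the square forces the image of $\partial \beta$ in $\pi_{j-1} \mathrm{FCont}_0(Y,\xi)$ to vanish, which is precisely what it means for $\partial \beta$ to be formally trivial as a class in $\pi_{j-1} \mathrm{Cont}_0(Y,\xi)$. I do not anticipate a genuine obstacle: the only subtlety is confirming that the parallel definitions of formal triviality on the contact-structure side and on the contactomorphism side correspond, respectively, to the kernels of the left and right vertical maps of the square, and this is immediate from the way the formal analogues $\Xi(Y,\xi)$ and $\mathrm{FCont}_0(Y,\xi)$ were set up in the excerpt.
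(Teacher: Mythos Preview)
Your proposal is correct and is precisely the argument the paper has in mind: the Corollary is stated immediately after Lemma \ref{moserflemma} with no proof, and the intended justification is exactly the naturality of the connecting homomorphism with respect to the morphism of fibrations in that Lemma, followed by the diagram chase you wrote out.
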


The homotopy type of the space $\Xi (Y, \xi )$ is often easy to understand, unlike that of $\mathcal{C}(Y , \xi )$. 
\begin{Example}
Let $Y$ be any integral homology $3$-sphere, and $\xi$ a $2$-plane field on $Y$. Let $\xi_{\mathrm{st}}$ be any contact structure on $S^3$ (say, the tight one). By a result of Hansen \cite{hansen} there is a homotopy equivalence $\Xi (S^3, \xi_{\mathrm{st}} ) \simeq \Xi (Y , \xi )$. From this one easily calculates $$ \pi_j \Xi (Y , \xi ) \approx \pi_j S^2 \times \pi_{j+3} S^2. $$
\end{Example}

\section{Contact Dehn twists on spheres} \label{dehnsection}

In this section we define the contact Dehn twist on a sphere in several equivalent ways, establish some key properties and discuss some examples when its square is isotopic to the identity.

\subsection{The contact Dehn twist}\label{defn1}

Let $(Y,\xi)$ be a contact $3$-manifold, and $S \subset Y$ be a co-oriented embedded sphere. Provided $S$ has a tight neighbourhood, we can associate to $S$ a contactomorphism $\tau_S$ well-defined in $\pi_0 \mathrm{Cont}(Y, \xi )$. We discuss this construction now.

\subsubsection{Local model} We start by discussing the local picture. Consider the contact $3$-manifold $Y_0 = [-1 , 1 ] \times S^2$ with the tight contact structure $\xi_0 = \mathrm{Ker}(\alpha_0 )$ where $\alpha_0 =  z ds + \frac{1}{2}xdy - \frac{1}{2} y dx $. Here $s$ is the standard coordinate on $[-1 , 1 ]$ and $x, y , z$ coordinates on $\mathbb{R}^3$ restricted onto the unit sphere $S^2$. Consider the sphere $S_0 = \{0\} \times S^2 \subset Y_0$. We now describe the contact Dehn twist $\tau_{S_0}$ on the sphere $S_0$.

We choose a smooth function $\theta : [-1,1] \rightarrow [0, 2 \pi ]$ with $\theta(s) \equiv 0$ near $s = -1$ and $\theta (s) = 2 \pi  $ near $s = 1$. Let $R_\varphi $ be the counterclockwise rotation in the $xy$ plane with angle $\varphi$. Consider the diffeomorphism $\widetilde{\tau}_{S_0}$ of $Y_0$ given by a smooth Dehn twist along $S_0$
$$ \widetilde{\tau}_{S_0} (s , x,y,z) = (s , R_{\theta(s)} (x,y) , z ).$$

Since $\pi_1 \mathrm{SO}(3) = \mathbb{Z}/2$ it follows that the squared Dehn twist $\widetilde{\tau}_{S_0}^2$ is smoothly isotopic to the identity rel. $\partial Y_0$. We don't quite have a contactomorphism of $(Y_0 , \xi_0 )$ since 
$$ \widetilde{\tau}_{S_0}^\ast \alpha_0 = \alpha_0 + \frac{\theta^{\prime}(s)}{2} (x^2 + y^2 ) ds .$$ However, consider the naive interpolation from $\alpha_0$ to $\widetilde{\tau}_{S_0}^\ast \alpha_0$ 
\begin{align*}
    \alpha_t = \alpha_0 + t \frac{\theta^{\prime}(s)}{2}(x^2 + y^2 ) ds
\end{align*}
and observe that
\begin{Lemma}\label{interp}
For any $t \in [0,1]$ the form $\alpha_t$ is a contact form.
\end{Lemma}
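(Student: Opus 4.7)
The plan is to prove the stronger statement that $\alpha_t \wedge d\alpha_t = \alpha_0 \wedge d\alpha_0$ as top-degree forms on $Y_0$, for every $t \in [0,1]$; since $\alpha_0$ is a contact form by hypothesis (it defines the tight contact structure $\xi_0$), this immediately yields the required nowhere-vanishing of $\alpha_t \wedge d\alpha_t$.

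To carry this out, set $f := \frac{\theta'(s)}{2}(x^2+y^2)$ and $\beta := \frac{1}{2}(x\,dy - y\,dx)$, so that $\alpha_0 = z\,ds + \beta$ and $\alpha_t = \alpha_0 + tf\,ds$. Using $d\alpha_t = d\alpha_0 + t\,df \wedge ds$, expansion gives
\begin{equation*}
\alpha_t \wedge d\alpha_t = \alpha_0 \wedge d\alpha_0 + t\bigl(\alpha_0 \wedge df \wedge ds + f\,ds \wedge d\alpha_0\bigr) + t^2 f\,ds \wedge df \wedge ds.
\end{equation*}
The $t^2$ term vanishes because $ds$ appears twice. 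For the $t$-linear term, the crucial point is that $f$ is independent of $z$, so $df \wedge ds = \theta'(s)(x\,dx + y\,dy) \wedge ds$; consequently the $z\,ds$ component of $\alpha_0$ contributes nothing to $\alpha_0 \wedge df \wedge ds$, and the $dz \wedge ds$ component of $d\alpha_0$ contributes nothing to $f\,ds \wedge d\alpha_0$. A short computation using the identity $\beta \wedge (x\,dx + y\,dy) = -\frac{1}{2}(x^2+y^2)\,dx \wedge dy$ then shows that the two surviving pieces both equal $\pm f\,dx \wedge dy \wedge ds$ with opposite signs, hence cancel.

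The argument is almost entirely computational; the only mild obstacle I foresee is sign-bookkeeping in the various wedge products. Once the identity $\alpha_t \wedge d\alpha_t = \alpha_0 \wedge d\alpha_0$ is established, the lemma follows at once, and moreover $\alpha_t$ defines the \emph{same} volume form as $\alpha_0$ for all $t$, which is a slightly stronger statement that may be useful later in the paper.
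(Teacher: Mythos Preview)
Your proposal is correct and follows exactly the paper's approach: the paper's proof is the single line ``A straightforward calculation shows $\alpha_t \wedge d\alpha_t = \alpha_0 \wedge d\alpha_0 > 0$,'' and you have simply supplied the details of that calculation. One minor remark: the phrase ``$f$ is independent of $z$'' is best understood as a statement about the ambient $\mathbb{R}^3$-coordinates (since on $S^2$ one has $x^2+y^2 = 1-z^2$), but this is harmless because the form identity can be verified in $\Omega^\ast([-1,1]\times\mathbb{R}^3)$ and then pulled back.
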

\begin{proof}
A straightforward calculation shows $\alpha_t \wedge d \alpha_t = \alpha_0 \wedge d \alpha_0 > 0$. 
\end{proof}

Thus, by Gray stability (a.k.a Moser's argument) \cite{geiges} the deformation of contact structures $\xi_t = \mathrm{Ker} (\alpha_t )$ is realised by an isotopy $f_t$ i.e. $f_0 = \mathrm{id}$ and $(f_t)^\ast \xi_t = \xi_0$. Since the forms $\alpha_t$ don't depend on $t$ near $\partial Y_0$ we may further assume that $f_t = \mathrm{id}$ near $\partial Y_0$. We then replace $\widetilde{\tau}_{S_0}$ with $\tau_{S_0} :=  \widetilde{\tau}_{S_0} \circ f_1 $ and the latter is a contactomorphism of $(Y_0 , \xi_0 )$. We also have that that the support of $\tau_{S_0}$ can be made arbitrarily close to the sphere $S_0$ by choosing $\theta(s)$ appropriately. Then, for any $\epsilon \in (0,1]$ we have a well-defined isotopy class of contact Dehn twist $$ \tau_{S_0} \in \pi_0 \mathrm{Cont}([-\epsilon , \epsilon ] \times S^2 , \xi_0 ).$$ 

It is worth pointing out the following

\begin{Lemma}\label{lem:ContSphericalAnnulus} The group $\mathrm{Cont}(Y_0,\xi_0)$ is homotopy equivalent to $\Omega \mathrm{U}(1) \simeq \mathbb{Z}$. Its $\pi_0$ is generated by the contact Dehn twist $\tau_{S_0}$.
\end{Lemma}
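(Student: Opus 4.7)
The plan is to realise $Y_0$ as the complement of a Darboux ball in a larger Darboux ball and to compute $\mathrm{Cont}(Y_0,\xi_0)$ from the fibration obtained by acting on the inner ball, using the Eliashberg--Mishachev $h$-principle on the larger ball. Concretely, fix $(\mathbb{B}^3,\xi_{\mathrm{st}})$ and a Darboux ball $B\subset \mathrm{int}\,\mathbb{B}^3$ centred at the origin such that $\mathbb{B}^3\setminus \mathrm{int}\,B$, with its induced contact structure, is contactomorphic to $(Y_0,\xi_0)$. The fibration (\ref{contball}) specialises to
\[
\mathrm{Cont}(\mathbb{B}^3,\xi_{\mathrm{st}},B)\longrightarrow \mathrm{Cont}(\mathbb{B}^3,\xi_{\mathrm{st}})\longrightarrow \mathrm{Emb}\bigl((\mathbb{B}^3,\xi_{\mathrm{st}}),(\mathbb{B}^3,\xi_{\mathrm{st}})\bigr),
\]
and under the identification above the fibre $\mathrm{Cont}(\mathbb{B}^3,\xi_{\mathrm{st}},B)$ is precisely $\mathrm{Cont}(Y_0,\xi_0)$ (both consist of contactomorphisms fixing a collar of the two boundary spheres of $Y_0$).

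Next, I would compute the other two terms. By the Eliashberg--Mishachev theorem quoted in the introduction, $\mathrm{Cont}(\mathbb{B}^3,\xi_{\mathrm{st}})\simeq \mathrm{Diff}(\mathbb{B}^3)$, which is contractible by Hatcher's resolution of the Smale conjecture; thus the total space of the fibration is $\simeq *$. By the contact Disk Theorem (\ref{embfr}),
\[
\mathrm{Emb}\bigl((\mathbb{B}^3,\xi_{\mathrm{st}}),(\mathbb{B}^3,\xi_{\mathrm{st}})\bigr)\simeq \mathrm{CFr}(\mathbb{B}^3,\xi_{\mathrm{st}})\simeq \mathbb{B}^3\times \mathrm{U}(1)\simeq \mathrm{U}(1).
\]
The long exact sequence of the fibration then yields a homotopy equivalence $\mathrm{Cont}(Y_0,\xi_0)\simeq \Omega\,\mathrm{U}(1)\simeq \mathbb{Z}$, exactly as claimed at the level of homotopy type.

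Finally, I would identify the generator. A generator of $\pi_1 \mathrm{U}(1)=\pi_1\mathrm{CFr}(\mathbb{B}^3,\xi_{\mathrm{st}})$ is represented by the loop $\{B_\varphi\}_{\varphi\in[0,2\pi]}$ of Darboux embeddings obtained from $B$ by rotating its contact frame at the origin by angle $\varphi$ in the $xy$-plane. Using contact isotopy extension one lifts this loop to a path $\{f_\varphi\}$ in $\mathrm{Cont}(\mathbb{B}^3,\xi_{\mathrm{st}})$ with $f_0=\mathrm{id}$ and $f_{2\pi}\in \mathrm{Cont}(\mathbb{B}^3,\xi_{\mathrm{st}},B)=\mathrm{Cont}(Y_0,\xi_0)$; this $f_{2\pi}$ is the image of the generator under the connecting map $\pi_1\mathrm{U}(1)\to \pi_0\mathrm{Cont}(Y_0,\xi_0)$. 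By construction, $f_{2\pi}$ is supported in an arbitrarily small collar of $S_0=\partial B$, acts as the identity on a neighbourhood of each boundary sphere of $Y_0$, and its underlying diffeomorphism is a smooth Dehn twist along $S_0$. Comparing with the explicit local model built in \S\ref{defn1} via Gray stability applied to the interpolation $\alpha_t$ shows that $f_{2\pi}$ is contact isotopic to $\tau_{S_0}$.

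The main obstacle is the last step: matching the generator extracted abstractly from the connecting map with the explicit contactomorphism $\tau_{S_0}$ constructed from the rotating diffeomorphism $\widetilde{\tau}_{S_0}$ and Moser's argument. Both have the same underlying smooth Dehn twist and both are supported near $S_0$, so the identification should follow from uniqueness of Gray-stability lifts up to contact isotopy, but the bookkeeping of frames and orientations is the one place where care is required.
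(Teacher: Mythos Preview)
Your approach is essentially the same as the paper's: the same fibration, the same use of Eliashberg--Mishachev and the contact Disk Theorem for the homotopy type, and the same connecting-map strategy for the generator. The paper resolves the obstacle you flag at the end by an explicit coordinate calculation: it writes down a concrete contact isotopy on $\mathbb{B}^3$ (via the same interpolation trick as in Lemma~\ref{interp}) from $\tau_{S_0}$ to the identity and then checks directly that the resulting loop of Darboux balls represents the generator of $\pi_1\mathrm{U}(1)$, thereby avoiding any ambiguity from abstract isotopy extension.
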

\begin{proof}
Gluing a Darboux ball $B$ to $(Y_0,\xi_0)$ gives back the standard contact ball $(\mathbb{B}^3,\xi_\mathrm{st})$. Thus, from the fibration (\ref{contball}) we have a map of fiber sequences 
\begin{center}
\begin{tikzcd}
    \mathrm{Cont}(Y_0, \xi_0 ) \arrow{r} &  \mathrm{Cont}(\mathbb{B}^3,\xi_\mathrm{st})\arrow{r} &  \mathrm{Emb}\big(  (\mathbb{B}^3,\xi_\mathrm{st}) , (\mathbb{B}^3,\xi_\mathrm{st})\big) \\
    \Omega \mathrm{U}(1) \arrow{r} \arrow{u} & \{\ast\} \arrow{r} \arrow{u}{\simeq} & \mathrm{U}(1) \arrow{u}{\simeq}
\end{tikzcd}
\end{center}
where the middle homotopy equivalence follows from Theorem \ref{thm:EM} combined with Hatcher's Theorem \cite{hatchersmale}. The first assertion now follows. For the second assertion, we need to show that the generator $1 \in \pi_1 \mathrm{U}(1)$ maps to the class of the contact Dehn twist $\tau_{S_0}$ under the connecting map.

We first describe the contact Dehn twist on $S_0$ more conveniently in terms of the coordinates on the ball $\mathbb{B}^3 = B \cup Y_0$. Recall that the standard contact structure on $\mathbb{B}^3$ is $\xi_{\mathrm{st}} = \mathrm{Ker}\alpha_{\mathrm{st}}$ where $\alpha_{\mathrm{st}} = dz + \frac{1}{2}xdy - \frac{1}{2}ydx$. Choose a smooth function $\theta : [0 , 1] \rightarrow [0, 2\pi]$ with $\theta = 0 $ near $0$ and $\theta = 2 \pi$ near $1$. Let $r^2 := x^2 + y^2 + z^2$ be the radius squared function on $\mathbb{B}^3$. Then the diffeomorphism of $\mathbb{B}^3$ given by 
\begin{align*}
    \widetilde{\tau} (x,y,z) := (R_{\theta (r^2 )}(x,y), z )
\end{align*}
does not quite preserve the contact structure, but
\begin{align*}
    (\widetilde{\tau})^\ast \alpha_{\mathrm{st}} = \alpha_{\mathrm{st}} + \frac{1}{2}(x^2 + y^2 ) \theta^{\prime} (r^2 ) d(r^2 ).
\end{align*}
As in Lemma \ref{interp}, the obvious interpolation that takes the second term in the above identity to zero gives a path of \textit{contact} forms, and as in \S \ref{generaldehn} we may canonically deform $\widetilde{\tau}$ to a contactomorphism $\tau_{S_0}$ in the isotopy class of the contact Dehn twist on $S_0$.

Consider now a homotopy of maps $\theta_t : [0,1] \rightarrow [0,2\pi]$ with $\theta_t$ constant near $1$ (with value $2\pi$), such that $\theta_0 = \theta$ and $\theta_1$ is the constant function with value $2 \pi$. We obtain an isotopy through diffeomorphisms of $\mathbb{B}^3$ (fixing a neighbourhood of the boundary $\partial \mathbb{B}^3$, but not the smaller ball $B$!) given by 
\begin{align*}
    \widetilde{\tau}_t (x,y,z) := (R_{\theta_t (r^2 ) }(x,y) , z )
\end{align*}
such that $\widetilde{\tau}_0 = \widetilde{\tau}$ and $\widetilde{\tau}_1= \mathrm{id}$. Again, by observing that for each $t$ the obvious interpolation from $(\widetilde{\tau}_t )^\ast \alpha_{\mathrm{st}}$ and $\alpha_{\mathrm{st}}$ gives a path of contact forms, we may canonically deform the isotopy $\widetilde{\tau}_t$ to a \textit{contact} isotopy $\tau_{t}$ with $\tau_{0} = \tau_{S_0}$ and $\tau_1 = \mathrm{id}$.

Now, the path of contactomorphisms $\tau_{1-t}$ from the identity to $\tau_{\partial B}$ induces a \textit{loop} of Darboux balls $(\tau_{1-t})(B)$ in the class of the generator $1 \in \mathbb{Z} = \pi_1 \mathrm{Emb} \big( (\mathbb{B}^3, \xi_{\mathrm{st}} ) , (\mathbb{B}^3, \xi_{\mathrm{st}} ) \big)$. From this the required result now follows.
\end{proof}

Likewise, we have a firm hold on the topology of the space of standard spheres in our local model. Let $S_{\pm} = \{\pm 1/2\} \times S^2 \subset Y_0$, and denote by $e_0 : S^2 \hookrightarrow Y_0$ the embedding of $S_0 \subset Y_0$.

\begin{Lemma}\label{lem:ReparametrizationLoop}
The map induced by reparametrisation of $e_0$ $$ \mathrm{U}(1)\rightarrow \mathrm{CEmb}(S^2,(Y_0,\xi_0)) \quad , \quad \theta \mapsto e_0 \circ r_\theta$$is a homotopy equivalence. Here $r_\theta (x ,y , z ) = (R_{\theta}(x,y) ,z  )$. Under the connecting homomorphism of the fibration (\ref{contsphere}) the generator of $\pi_1 \mathrm{U}(1) = \mathbb{Z}$ maps to the class $$ (\tau_{S_{-}})^{-1}\tau_{S_{+}} \in \pi_0 \mathrm{Cont}(Y_0 , \xi_0 , S_0 ).$$
\end{Lemma}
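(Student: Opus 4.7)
The plan is to analyze the fibration (\ref{contsphere}) associated to the sphere $S_0 \subset Y_0$:
$$\mathrm{Cont}(Y_0,\xi_0, S_0) \longrightarrow \mathrm{Cont}(Y_0,\xi_0) \longrightarrow \mathrm{CEmb}(S^2,(Y_0,\xi_0)).$$
By Lemma \ref{lem:ContSphericalAnnulus} the middle term is homotopy equivalent to $\mathbb{Z}$, with $\pi_0$ generated by $\tau_{S_0}$. For the fiber, I would slit $Y_0$ along $S_0$ into two contact annuli, each contactomorphic to $(Y_0,\xi_0)$ itself; after replacing ``fixing $S_0$ pointwise'' by ``fixing a collar of $S_0$'' (which yields a weakly equivalent subgroup), restriction gives a homotopy equivalence
$$\mathrm{Cont}(Y_0,\xi_0,S_0) \simeq \mathrm{Cont}(Y_0,\xi_0)\times \mathrm{Cont}(Y_0,\xi_0) \simeq \mathbb{Z}^2,$$
whose two factors are generated by the Dehn twists $\tau_{S_{-1/2}}$ and $\tau_{S_{1/2}}$ on the mid-spheres of the halves. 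Both project to $\tau_{S_0}$ in $\pi_0 \mathrm{Cont}(Y_0,\xi_0)$ since $S_0$ and $S_{\pm 1/2}$ are contact isotopic. The long exact sequence then shows that (the relevant component of) $\mathrm{CEmb}(S^2,(Y_0,\xi_0))$ is a $K(\mathbb{Z},1) \simeq S^1$, with $\pi_1$ generated, via the connecting map, by the class $(\tau_{S_{-1/2}})^{-1}\tau_{S_{1/2}}$.

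To identify the reparametrisation loop in $\pi_1$, I would construct an explicit lift $\Phi^\theta$ of $\theta \mapsto e_0\circ r_\theta$ to a path of contactomorphisms of $Y_0$ starting at the identity. Pick a cutoff $\rho:[-1,1]\to[0,1]$ with $\rho(\pm 1)=0$ and $\rho(0)=1$, and first set
$$\widetilde{\Phi}^\theta(s,x,y,z) = (s, R_{\rho(s)\theta}(x,y), z),$$
which is a diffeomorphism restricting to $r_\theta$ on $S_0$ and to the identity near $\partial Y_0$. As in Lemma \ref{interp}, the affine interpolation between $(\widetilde{\Phi}^\theta)^\ast \alpha_0$ and $\alpha_0$ stays through contact forms, and Moser's trick canonically corrects $\widetilde{\Phi}^\theta$ to a contactomorphism $\Phi^\theta$ with the same behaviour on $S_0$ and near $\partial Y_0$. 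At $\theta=2\pi$, the restrictions of $\Phi^{2\pi}$ to each half-annulus are (after the evident change of coordinates) the local models for Dehn twists of the form $\tau_{S_{\pm 1/2}}^{\pm 1}$, with the signs read off from the angle function $\rho(s)\cdot 2\pi$; these combine to give exactly $(\tau_{S_{-1/2}})^{-1}\tau_{S_{1/2}}$, which is the second assertion. The first assertion then follows: this class generates $\pi_1$ of the base, so the reparametrisation map is a $\pi_1$-isomorphism between $K(\mathbb{Z},1)$'s, hence a weak homotopy equivalence.

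The main obstacle I expect is the fiber computation $\mathrm{Cont}(Y_0,\xi_0,S_0)\simeq \mathbb{Z}^2$. The restriction-to-halves map requires justifying that collar-fixing and pointwise-fixing of $S_0$ give weakly equivalent subgroups, and that restriction is genuinely a homotopy equivalence (with inverse assembled by extending contactomorphisms of the halves by the identity across $S_0$, via a contact isotopy-extension argument). Tracking signs carefully through the Dehn-twist identification at $\theta=2\pi$ is the other subtle point, though essentially mechanical.
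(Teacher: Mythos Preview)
Your proposal is correct and follows essentially the same approach as the paper: both analyse the fibration $\mathrm{Cont}(Y_0,\xi_0,S_0)\to\mathrm{Cont}(Y_0,\xi_0)\to\mathrm{CEmb}(S^2,(Y_0,\xi_0))$, identify the fiber as $\mathbb{Z}^2$ by splitting along $S_0$ and applying Lemma~\ref{lem:ContSphericalAnnulus} to each half, identify the total space as $\mathbb{Z}$ by the same lemma, and read off the result. The paper compresses this into a single map of fiber sequences with a model bottom row $\Omega\mathrm{U}(1)\times\Omega\mathrm{U}(1)\to\Omega\mathrm{U}(1)\to\mathrm{U}(1)$ and then asserts that both claims follow; your version is more explicit, in particular your direct construction of the lift $\Phi^\theta$ and reading off the monodromy at $\theta=2\pi$ supplies the detail behind the paper's terse ``This establishes both assertions.''
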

\begin{proof}
We have the following map of fiber sequences, with homotopy equivalences on the fiber and total space by Lemma \ref{lem:ContSphericalAnnulus}
\[ \begin{tikzcd}
\mathrm{Cont}(Y_0 , \xi_0 , S_0 ) \arrow{r} & \mathrm{Cont}(Y_0,\xi_0)\arrow{r} &  \mathrm{CEmb}(S^2,(Y_0,\xi_0))\\  
\Omega \mathrm{U}(1) \times \Omega\mathrm{U}(1) \arrow{u}{\simeq} \arrow{r} &  \Omega\mathrm{U}(1)  \arrow{r} \arrow{u}{\simeq}  & \mathrm{U}(1) \arrow{u} 
\end{tikzcd}
\]
This establishes both assertions.
\end{proof}

\subsubsection{General case}\label{generaldehn}

The robustness of our local picture allows us to consider contact Dehn twists in more general settings. We fix a $3$-manifold $(Y, \xi )$ together with a co-oriented \textit{standard convex sphere} $S \subset Y$ i.e. an embedded sphere whose characteristic foliation agrees with that of $S_0 \subset Y_0$ in the local model. It follows that neighbourhoods of $S \subset Y$ and $S_0 \subset Y_0$ are contactomorphic in a (homotopically) canonical fashion \cite{convexite,geiges}, and by making the support of $\tau_{S_0}$ sufficiently close to $S_0 $ we may therefore implant $\tau_{S_0}$ into $(Y, \xi )$ as a compactly supported contactomorphism $\tau_S$, which we refer to as the \textit{contact Dehn twist} on the co-oriented standard convex sphere $S \subset Y$. The class of $\tau_S$ in $\pi_0 \mathrm{Cont}(Y , \xi )$ only depends on the isotopy class of $S$ in the space of co-oriented standard convex spheres, defining a map of sets
\begin{align*}
    \pi_0 \mathrm{CEmb}(S^2 , (Y, \xi ) ) \rightarrow \pi_0 \mathrm{Cont}(Y, \xi ) \quad , \quad S \mapsto \tau_{S} 
\end{align*}

The contactomorphism $\tau_S$ makes sense more generally whenever $S \subset Y$ is a just a convex co-oriented sphere with a \textit{tight neighbourhood} $U$ (but not necessarily having standard characteristic foliation). Indeed, by Giroux's Criterion \cite{GirouxCriterion} the dividing set of $S$ is connected. Then by Giroux's Realisation theorem, we may find a smooth isotopy of sphere embeddings $S_t$ whose image lies in the tight neighbourhood $U$, $S_0 = S$ and $S_1$ is a \textit{standard} convex sphere, to which we associate the Dehn twist $\tau_{S_1}$ by the previous construction. A different choice of isotopy $S_{t}^{\prime}$ may yield a different standard convex sphere $S_{1}^{\prime} $. The two spheres ($S_{1}$ and $S_{1}^{\prime})$) are isotopic within $U$ as \textit{standard} convex spheres by a result of Colin (\cite{colin}, Proposition 10), so the contact Dehn twists $\tau_{S_1}$ and $\tau_{S_{1}^{\prime}}$ are contact isotopic. Therefore, we have a well defined contact Dehn twist $\tau_S \in \pi_0 \mathrm{Cont}(Y, \xi )$ associated to the convex sphere $S$ with tight neighbourhood $U$. In fact, since any \textit{smooth} sphere can be made convex by a small isotopy \cite{convexite}, this construction defines a map
\begin{align*}
    \pi_0 \mathrm{Emb}_{\mathrm{tight}}(S^2 , (Y, \xi ) )  \rightarrow \pi_0 \mathrm{Cont}(Y, \xi ) \quad , \quad S \mapsto \tau_S 
\end{align*}
where $\mathrm{Emb}_{\mathrm{tight}}(S^2 , (Y, \xi ) )$ stands for the space of \textit{smooth} co-oriented embeddings $S^2 \subset Y$ which admit a tight neighbourhood. In particular, if $(Y, \xi )$ is tight (globally) then $\tau_S $ only depends up to contact isotopy on the \textit{smooth} isotopy class of the co-oriented sphere $S$.

The following particular case will play an essential role in this article, so we emphasize it now. Consider a Darboux ball $B = \phi (\mathbb{B}^3 )$ in a contact manifold $(Y , \xi )$. Associated to an exterior sphere (i.e. contained in the complement $Y \setminus B$) parallel to $\partial B$ we have a well defined contact Dehn twist which fixes $B$ pointwise. By abuse in notation and for convenience we denote this contactomorphism by $\tau_{\partial B}$ even if the Dehn twist is not on the sphere $\partial B$. This defines a map of sets
\begin{align*}
    \pi_0 \mathrm{Emb}\big( (\mathbb{B}^3 , \xi_{\mathrm{st}} ), (Y, \xi ) \big) \rightarrow \pi_0 \mathrm{Cont} (Y, \xi , B ) \quad, \quad B \mapsto \tau_{\partial B}.
\end{align*}
The following convenient description of $\tau_{\partial B}$ follows from the local calculation in the proof of Lemma \ref{lem:ContSphericalAnnulus}. 

\begin{Lemma}\label{reparamdefn}
The Dehn twist $\tau_{\partial B} \in \pi_0 \mathrm{Cont} (Y, \xi , B)$ agrees with the image of $1 \in \mathbb{Z}$ under the map $$\mathbb{Z} = \pi_1 \mathrm{U}(1) \rightarrow \pi_1 \mathrm{Emb} ( ( \mathbb{B}^3 , \xi_{\mathrm{st}} )  , ( Y, \xi ) )  \rightarrow \pi_0 \mathrm{Cont} (Y , \xi , B )$$
where the first map is induced by the reparametrisation map 
\begin{align*}
\mathrm{U}(1) \rightarrow \mathrm{Emb} \big( ( \mathbb{B}^3 , \xi_{\mathrm{st}} )  , ( Y , \xi ) \big) \quad, \quad \theta \mapsto \phi \circ r_\theta 
\end{align*}
and the second map is the connecting map in the long exact sequence of the fibration (\ref{contball}).
\end{Lemma}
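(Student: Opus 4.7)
The claim is essentially a globalization of the explicit calculation already carried out inside the proof of Lemma \ref{lem:ContSphericalAnnulus}, where an analogous statement was verified with $(Y,\xi)$ replaced by $(\mathbb{B}^3,\xi_\mathrm{st})$. My plan is to implant that local model into the ambient $(Y,\xi)$ via the Darboux embedding $\phi$ and read off the connecting map from this implanted isotopy. No new idea is needed beyond naturality of the connecting homomorphism in the fibration (\ref{contball}) with respect to Darboux embeddings.

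Concretely, first enlarge $\phi$ to a Darboux embedding $\phi':(\mathbb{B}^3,\xi_\mathrm{st})\hookrightarrow (Y,\xi)$ containing $B$ as a concentric sub-ball. By the construction of $\tau_{\partial B}$ in \S\ref{generaldehn}, we may arrange $\tau_{\partial B}$ to be supported inside $\phi'(\mathbb{B}^3)$ and to agree with the pushforward via $\phi'$ of the model Dehn twist $\tau_{S_0}$ on a sphere parallel to $\partial B$. The proof of Lemma \ref{lem:ContSphericalAnnulus} already furnishes an explicit compactly supported contact isotopy $\{\tau_t\}\subset\mathrm{Cont}(\mathbb{B}^3,\xi_\mathrm{st})$ with $\tau_0=\tau_{S_0}$ and $\tau_1=\mathrm{id}$, arising from the family $\widetilde{\tau}_t(x,y,z)=(R_{\theta_t(r^2)}(x,y),z)$ plus a Gray-stability correction, where $\theta_t$ interpolates between $\theta$ and the constant $2\pi$. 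Implanting via $\phi'$ yields a contact isotopy $\{F_t\}\subset\mathrm{Cont}(Y,\xi)$ with $F_0=\tau_{\partial B}$ and $F_1=\mathrm{id}$.

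The reversed path $t\mapsto F_{1-t}$ connects $\mathrm{id}$ to $\tau_{\partial B}$ in $\mathrm{Cont}(Y,\xi)$, and its image under the fibration (\ref{contball}) is a loop $\gamma(t):=F_{1-t}\circ\phi$ in $\mathrm{Emb}((\mathbb{B}^3,\xi_\mathrm{st}),(Y,\xi))$ based at $\phi$. By the contact disk theorem (\ref{embfr}) this embedding space is homotopy equivalent to $Y\times\mathrm{U}(1)$. The loop is constant in the $Y$-factor (both endpoints fix $B$ pointwise and $F_t$ fixes $\phi(0)$), so its class is determined by the $\mathrm{U}(1)$-factor, i.e.\ the rotation angle of the co-frame $(dF_{1-t})_{\phi(0)}\circ(d\phi)_0(e_1,e_2)$. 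As observed in the proof of Lemma \ref{lem:ContSphericalAnnulus}, one may arrange $\theta_t'(0)=0$ for all $t$, so that the Gray correction is identity near the center and $F_{1-t}$ agrees with $\phi'\circ\widetilde{\tau}_{1-t}\circ(\phi')^{-1}$ there; the rotation angle is then $\theta_{1-t}(0)$, which sweeps $\mathrm{U}(1)$ exactly once as $t$ ranges over $[0,1]$. This agrees, up to the choice of generator of $\pi_1\mathrm{U}(1)$, with the class of the reparametrization loop $\theta\mapsto\phi\circ r_\theta$.

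Finally, by the definition of the connecting homomorphism of the fibration (\ref{contball}), the image of $[\gamma]\in\pi_1\mathrm{Emb}\big((\mathbb{B}^3,\xi_\mathrm{st}),(Y,\xi)\big)$ in $\pi_0\mathrm{Cont}(Y,\xi,B)$ is exactly $[F_1]=[\tau_{\partial B}]$, which proves the lemma. The only mildly technical point is the explicit derivative-at-origin computation and matching of orientations of the two $\mathrm{U}(1)$-generators, but this is a direct transcription of the computation in Lemma \ref{lem:ContSphericalAnnulus}; no essential new obstacle arises because the entire statement is really a naturality observation about that earlier calculation.
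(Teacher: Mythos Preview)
Your approach is essentially the paper's: the paper simply asserts that the lemma follows from the local calculation in the proof of Lemma~\ref{lem:ContSphericalAnnulus}, and you have correctly spelled out how to implant that calculation into $(Y,\xi)$ via the Darboux embedding and read off the connecting map. Two minor slips worth fixing: for the Gray correction to be the identity \emph{near} the center you want $\theta_t'$ to vanish identically on a neighbourhood of $0$ (not just $\theta_t'(0)=0$), which is easily arranged; and in the final paragraph the endpoint of the lifted path is $F_0=\tau_{\partial B}$, not $F_1$.
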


Let $e\in \mathrm{CEmb} \big( S^2  , ( Y, \xi ) \big)$ be an embedding of a standard convex sphere. Thus, the image $S = e (S^2 ) \subset (Y, \xi )$ is a co-oriented standard convex sphere, let $S_\pm$ be two parallel copies of $S$ given by pushing $S$ forward and backward. By the local calculation in Lemma \ref{lem:ReparametrizationLoop} we have:

\begin{Lemma}\label{reparamLemma}
The product of Dehn twists $(\tau_{S_-})^{-1} \tau_{S_+} \in \pi_0 \mathrm{Cont} (Y, \xi , S )$ agrees with the image of $1 \in \mathbb{Z}$ under the map
$$\mathbb{Z} = \pi_1 \mathrm{U}(1) \rightarrow \pi_1 \mathrm{CEmb} \big( S^2  , ( Y, \xi ) \big)  \rightarrow \pi_0 \mathrm{Cont} (Y , \xi , S )$$
where the first map is induced by the reparametrisation map 
\begin{align*}
\mathrm{U}(1) \rightarrow \mathrm{CEmb} \big( S^2  , ( Y , \xi ) \big) \quad, \quad \theta \mapsto e \circ r_\theta 
\end{align*}
and the second map is the connecting map in the long exact sequence of the fibration (\ref{contsphere}).
\end{Lemma}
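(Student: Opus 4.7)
The plan is to deduce this global statement from the local computation carried out in Lemma \ref{lem:ReparametrizationLoop}, via naturality of the connecting homomorphism under extension by the identity on a standard model neighborhood of $S$.

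First, because $S \subset (Y,\xi)$ is a co-oriented standard convex sphere, a neighborhood of $S$ is contactomorphic to the local model. After possibly precomposing with a rotation $r_{\theta_0} \in \mathrm{U}(1)$ and rescaling the normal coordinate on $[-1,1]$, I can produce a contact embedding $\Phi:(Y_0,\xi_0) \hookrightarrow (Y,\xi)$ sending $S_0$ to $S$ so that $\Phi \circ e_0 = e$, with the spheres $\{\pm 1/2\}\times S^2 \subset Y_0$ mapping to the parallel pushoffs $S_\pm \subset Y$.

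Next, extension of a compactly supported contactomorphism by the identity on $Y \setminus \Phi(Y_0)$, and postcomposition of a sphere embedding with $\Phi$, induces a morphism between the fibration (\ref{contsphere}) for $(Y_0,\xi_0,S_0)$ and the one for $(Y,\xi,S)$. On the base, this morphism is simply $\Phi_\ast$, and by the previous paragraph the two reparametrisation maps $\theta \mapsto e_0 \circ r_\theta$ and $\theta \mapsto e \circ r_\theta$ fit into a commutative square
\[
\begin{tikzcd}
\mathrm{U}(1) \arrow{r} \arrow[equal]{d} & \mathrm{CEmb}(S^2,(Y_0,\xi_0)) \arrow{d}{\Phi_\ast} \\
\mathrm{U}(1) \arrow{r} & \mathrm{CEmb}(S^2,(Y,\xi)).
\end{tikzcd}
\]
Hence, by naturality of the connecting homomorphism in the long exact sequence of the two fibrations, the image of the generator $1 \in \pi_1 \mathrm{U}(1)$ under the global composite equals $\iota_\ast$ applied to the image under the local composite, where $\iota_\ast : \pi_0 \mathrm{Cont}(Y_0,\xi_0,S_0) \to \pi_0\mathrm{Cont}(Y,\xi,S)$ is the extension-by-identity map.

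Finally, Lemma \ref{lem:ReparametrizationLoop} identifies the local image as the class $(\tau_{S_{-1/2}})^{-1}\tau_{S_{1/2}}$. By the very definition of the contact Dehn twist in the general case given in \S\ref{generaldehn} -- namely, that $\tau_S$ is obtained by implanting the compactly supported local Dehn twist through a standard convex neighborhood -- we have $\iota_\ast(\tau_{S_{\pm 1/2}}) = \tau_{S_\pm}$, and the lemma follows. The only real subtlety is the bookkeeping in the first step of matching the given parametrisation $e$ and the pushoff convention for $S_\pm$ to the corresponding data in the local model; once this compatibility is arranged the rest of the argument is a formal naturality statement applied to the already-established local calculation.
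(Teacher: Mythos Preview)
Your proposal is correct and follows exactly the approach the paper indicates: the paper simply states ``By the local calculation in Lemma \ref{lem:ReparametrizationLoop}'' before the lemma, and you have spelled out precisely the naturality argument that reduces the global statement to that local one. The only minor point is that ``rescaling the normal coordinate'' is not literally a contactomorphism of the model, but as you note this is harmless bookkeeping since the Dehn twist class is well-defined on $[-\epsilon,\epsilon]\times S^2$ for any $\epsilon$ and one may instead shrink the model neighborhood.
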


\subsection{The Dehn twist and the evaluation map}

We move on to study a \textit{relative} version of the isotopy problem for the Dehn twist. Consider the Dehn twist $\tau_{\partial B}$ on (an exterior sphere parallel to) the boundary $\partial B$ of a Darboux ball, as in the previous section. We will now rephrase the problem of whether $\tau_{\partial B}^2 $ defines the trivial class in $ \pi_0 \mathrm{Cont}_0(Y, \xi , B )$ as a \textit{lifting} problem.

\subsubsection{The obstruction class}

The main player is the evaluation mapping $ev_B : \mathcal{C}(Y , \xi ) \rightarrow S^2$ defined by (\ref{ev}), which is a fibration (Lemma \ref{evfib}). If $\delta : \pi_2 S^2 \rightarrow \pi_1 \mathcal{C}(Y , \xi , B )$ is the connecting map in the homotopy long exact sequence, then we have a distinguished class 
\begin{align}
    \mathcal{O}_{\xi} := \delta ( 1)  \in \pi_1 \mathcal{C}(Y , \xi , B ) \label{obstr}
\end{align}
which, by construction, is the \textit{obstruction class} to finding a homotopy section of $ev_B$ (i.e. a map $s : S^2 \rightarrow \mathcal{C}(Y , \xi )$ such that $ev_B \circ s : S^2 \rightarrow S^2$ has degree one):
\begin{center}
$ev_B $ \textit{admits a homotopy section if and only if} $ \mathcal{O}_\xi = 0 $ .
\end{center}

Later in this section we will explicitly describe a loop of contact structures that represents the obstruction class $\mathcal{O}_\xi \in \pi_1 \mathcal{C}(Y, \xi , B )$. 

We now relate the problem of finding a section of $ev_B$ to the triviality of the Dehn twist $\tau_{\partial B}^2$ as follows. Consider the connecting map $\delta^\prime: \pi_1 \mathcal{C}(Y , \xi , B ) \rightarrow \pi_0 \mathrm{Cont}_{0} (Y , \xi , B )$ of the fibration (\ref{moserB}). The key observation is the following:

\begin{Proposition}\label{moserdehn}
The class $\delta^\prime (\mathcal{O}_\xi ) \in \pi_0 \mathrm{Cont}_{0}(Y , \xi , B) $ agrees with the \textbf{squared} contact Dehn twist $\tau_{\partial B}^2$.
\end{Proposition}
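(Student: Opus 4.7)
The plan is to represent the obstruction class $\mathcal{O}_\xi$ by a geometrically explicit loop of contact structures built by ``rotating'' $\xi$ near the centre of $B$, and then to extract $\delta'(\mathcal{O}_\xi)$ by comparing two natural lifts of this loop. The final identification with $\tau_{\partial B}^2$ will come from reading off the induced reparametrisation loop on $B$ and invoking Lemma \ref{reparamdefn}.

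First I would build an explicit representative of $\mathcal{O}_\xi$. Using the long exact sequence of the fibration $\mathrm{U}(1) \to \mathrm{SO}(3) \to S^2$, fix a lift $\tilde{f} : (D^2, \partial D^2) \to (\mathrm{SO}(3), \mathrm{U}(1))$ of a degree-one map $D^2/\partial D^2 \to S^2$, so that the boundary loop $\gamma := \tilde{f}|_{\partial D^2}$ has winding number $\pm 2$ in $\mathrm{U}(1)$. Using the Darboux chart on $B$ together with a radial cutoff, I would construct a continuous family $R : D^2 \to \mathrm{Diff}_0(Y)$, supported in $B$, with $R_0 = \mathrm{id}$ and whose restriction to a smaller concentric ball $B_0 \subset B$ acts as the linear rotation $\tilde{f}(z)$. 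The disk $g(z) := (R_z)_\ast \xi$ in $\mathcal{C}(Y, \xi)$ then satisfies $ev_B \circ g = \pi \circ \tilde{f}$, which has degree one, and restricts on $\partial D^2$ to a loop in $\mathcal{C}(Y, \xi, B_0)$ that represents $\mathcal{O}_\xi$ after identifying $\mathcal{C}(Y, \xi, B_0) \simeq \mathcal{C}(Y, \xi, B)$ via parametric Darboux.

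Next I would compute $\delta'(\mathcal{O}_\xi)$ by playing two lifts of the boundary loop $g|_{\partial D^2}$ against each other. The ``rotational'' lift $H_\theta := R_{\gamma(\theta)} \in \mathrm{Diff}_0(Y)$ is a \emph{loop} (both endpoints are $\mathrm{id}$), but it fails to fix $B_0$ pointwise. By contrast, Moser's argument produces a lift $\Psi_\theta \in \mathrm{Diff}_0(Y, B_0)$ starting at $\mathrm{id}$, and by definition $\Psi_{2\pi} = \delta'(\mathcal{O}_\xi)$. Setting $L_\theta := H_\theta^{-1} \circ \Psi_\theta$, one checks $L_\theta \in \mathrm{Cont}_0(Y, \xi)$, $L_0 = \mathrm{id}$, $L_{2\pi} = \Psi_{2\pi}$, and $L_\theta|_{B_0} = \gamma(\theta)^{-1}$. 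Hence $L$ is a path in $\mathrm{Cont}_0(Y, \xi)$ that under the fibration (\ref{contball}) projects to the reparametrisation loop $\theta \mapsto \gamma(\theta)^{-1}$ of winding number $\mp 2$ in $\mathrm{U}(1)$. By Lemma \ref{reparamdefn}, the endpoint $\delta'(\mathcal{O}_\xi) = L_{2\pi}$ of this lift therefore equals $\tau_{\partial B}^{\mp 2}$, and with appropriate orientation conventions this is exactly $\tau_{\partial B}^2$.

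The main technical obstacle is producing the family $R$ so that its boundary loop genuinely sits inside $\mathcal{C}(Y, \xi, B)$: the obvious radial interpolation $R_z(x) := \tilde{f}(\chi(|x|) z) \cdot x$ forces the rotation to pass through $\mathrm{SO}(3) \setminus \mathrm{U}(1)$ in the collar $B \setminus B_0$, because $\gamma$ has non-zero winding in $\mathrm{U}(1)$ and therefore cannot be contracted to $\mathrm{id}$ \emph{within} $\mathrm{U}(1)$. I would bypass this by arranging agreement with $\xi$ only over the smaller ball $B_0$ and using the parametric-Darboux equivalence $\mathcal{C}(Y, \xi, B_0) \simeq \mathcal{C}(Y, \xi, B)$ to import classes. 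A secondary bookkeeping task is fixing signs in the connecting map $\pi_2(S^2) \to \pi_1(\mathrm{U}(1))$ and in Lemma \ref{reparamdefn}, which together determine whether the final exponent is $+2$ or $-2$.
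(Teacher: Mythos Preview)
Your argument is correct and rests on the same two ingredients as the paper's proof: the connecting map $\pi_2 S^2 \to \pi_1 \mathrm{U}(1)$ of the fibration $\mathrm{U}(1)\to\mathrm{SO}(3)\to S^2$ is multiplication by $2$, and Lemma~\ref{reparamdefn} identifies the resulting class in $\pi_0\mathrm{Cont}_0(Y,\xi,B)$ with $\tau_{\partial B}^{\pm 2}$. The difference is in packaging. The paper first reduces to the local model $(\mathbb{B}^3,\xi_{\mathrm{st}})$ with a smaller ball $B\subset\mathbb{B}^3$, and there reads off $\delta'(\mathcal{O}_{\xi_{\mathrm{st}}})$ from a commuting diagram of three fibrations (the $ev_B$, $\mathrm{Diff}$, and $\mathrm{Cont}$ fibrations stacked vertically, with base spaces $S^2$, $\mathrm{SO}(3)$, $\mathrm{U}(1)$); the general case then follows by naturality under a contact embedding $B\subset\mathbb{B}^3\hookrightarrow Y$. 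Your approach instead works directly in $Y$: you build the rotational disk $g$ by hand, produce two lifts $H$ and $\Psi$ of its boundary loop, and compare them via $L=H^{-1}\Psi$ to land in the reparametrisation loop to which Lemma~\ref{reparamdefn} applies.

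Both are valid. The paper's local-plus-naturality argument is cleaner in that it sidesteps the $B_0$ versus $B$ issue you flag (no cutoff is needed, since in $\mathbb{B}^3$ the entire diagram is available) and avoids constructing explicit Moser lifts. Your argument has the advantage of being self-contained in $Y$ and making the two-lift mechanism behind the connecting maps visible. One minor point: your notation $H_\theta := R_{\gamma(\theta)}$ conflates $\gamma(\theta)\in\mathrm{U}(1)$ with the corresponding boundary point of $D^2$; and to get $L_0=\mathrm{id}$ you are implicitly arranging $\tilde f$ so that the basepoint of $\partial D^2$ maps to $\mathrm{id}\in\mathrm{SO}(3)$, which is fine but worth stating.
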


\begin{proof}
Consider first the case when $(Y , \xi )$ is the contact unit ball $( \mathbb{B}^3 , \xi_{\mathrm{st}} = \mathrm{Ker}( dz + \frac{1}{2}xdy - \frac{1}{2}ydx ) )$ and $B \subset \mathbb{B}^3 $ a subball of smaller radius with center at $0$. The fibrations from \S \ref{fibrations} fit into a commuting diagram
\[ \begin{tikzcd}
\mathcal{C}(\mathbb{B}^3 , \xi_{\mathrm{st}} , B ) \arrow{r} & \mathcal{C}(\mathbb{B}^3 , \xi_{\mathrm{st}}  ) \arrow{r}{ev_B} &  S^2 \\  
\mathrm{Diff}_0 (\mathbb{B}^3 , B ) \arrow{u} \arrow{r} & \mathrm{Diff}_{ 0} (\mathbb{B}^3 ) \arrow{r} \arrow{u}  & \mathrm{Emb} ( \mathbb{B}^3  ,  \mathbb{B}^3 ) \simeq \mathrm{SO}(3) \arrow{u} \\
\mathrm{Cont}_0(\mathbb{B}^3 , \xi_{\mathrm{st}} ,  B ) \arrow{u} \arrow{r} & \mathrm{Cont}_{ 0} (\mathbb{B}^3 , \xi_{\mathrm{st}}) \arrow{r} \arrow{u}  & \mathrm{Emb} ( ( \mathbb{B}^3 , \xi_{\mathrm{st}} )  , ( \mathbb{B}^3 , \xi_{\mathrm{st}} ) )\simeq \mathrm{U}(1) \arrow{u} 
\end{tikzcd}
\]

In the third vertical fiber sequence the map $\pi_2 S^2 = \mathbb{Z} \rightarrow \pi_1 \mathrm{U}(1) = \mathbb{Z}$ is multiplication by $2$. From the diagram we see that the image of $\mathcal{O}_{\xi_{\mathrm{st}}} \in \pi_1 \mathcal{C}(\mathbb{B}^3 , \xi_{\mathrm{st}} , B )$ in $\pi_0 \mathrm{Cont}_0 (\mathbb{B}^3 , \xi_{\mathrm{st}} , B ) $ can be alternatively calculated as the image of $2 \in \mathbb{Z} = \pi_1 \mathrm{U}(1)$ in $\pi_0 \mathrm{Cont}_0 (\mathbb{B}^3 , \xi_{\mathrm{st}} , B )$. From Lemma \ref{reparamdefn} this is the class of $\tau_{\partial B}^2$.

For an arbitrary $(Y , \xi )$ and a Darboux ball $B \subset Y$ the result then follows from the previous local calculation by extending the contact embedding $B \hookrightarrow Y$ to a contact embedding $B \subset \mathbb{B}^3 \hookrightarrow Y$, and considering the commuting diagram
\[ \begin{tikzcd}
\pi_2 S^2 \arrow{r} \arrow{d}  & \pi_1 \mathcal{C}( \mathbb{B}^3 , \xi_{\mathrm{st}} , B ) \arrow{r} \arrow{d}  &  \pi_0 \mathrm{Cont}_{0}(\mathbb{B}^3 , \xi_{\mathrm{st}} , B) \arrow{d} \\
\pi_2 S^2 \arrow{r}    & \pi_1 \mathcal{C}( Y , \xi , B) \arrow{r}  &  \pi_0 \mathrm{Cont}_{0}(Y , \xi , B) . 
\end{tikzcd}
\]
\end{proof}

\begin{Corollary}\label{criterionB}
Suppose $Y$ is aspherical (i.e. irreducible and with infinite fundamental group). Then $\tau_{\partial B}^2$ is isotopic to the identity rel. $B$ if and only if the evaluation mapping (\ref{ev}) admits a homotopy section (i.e. the obstruction class $\mathcal{O}_\xi$ vanishes).
\end{Corollary}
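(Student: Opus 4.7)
The easy direction is immediate from Proposition \ref{moserdehn}: since $\tau_{\partial B}^2 = \delta'(\mathcal{O}_\xi)$, vanishing of $\mathcal{O}_\xi$ forces vanishing of $\tau_{\partial B}^2$. For the converse, my plan is, starting from a null-isotopy of $\tau_{\partial B}^2$, to build an explicit homotopy section of $ev_B: \mathcal{C}(Y, \xi) \to S^2$, from which $\mathcal{O}_\xi = 0$ follows at once.

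By Lemma \ref{reparamdefn}, $\tau_{\partial B}^2$ is the image of the winding-$2$ reparametrisation loop in $\pi_1 \mathrm{Emb}\big((\mathbb{B}^3, \xi_{\mathrm{st}}), (Y, \xi)\big)$ under the connecting map of the fibration (\ref{contball}). Hence if $\tau_{\partial B}^2 = 0$, exactness lets me lift this winding-$2$ loop to a loop $\tilde{f}: S^1 \to \mathrm{Cont}_0(Y, \xi)$ based at the identity, obtained by concatenating the canonical path from $\mathrm{id}$ to $\tau_{\partial B}^2$ with a chosen contact isotopy from $\tau_{\partial B}^2$ back to $\mathrm{id}$ rel $B$. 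Viewing $\tilde{f}$ inside $\mathrm{Diff}_0(Y)$ and projecting to $\mathrm{Emb}(\mathbb{B}^3, Y) \simeq Y \times \mathrm{SO}(3)$, the projected loop is the winding-$2$ loop in $\mathrm{U}(1) \subset \mathrm{SO}(3)$, which is null-homotopic since $\pi_1 \mathrm{U}(1) \to \pi_1 \mathrm{SO}(3) = \mathbb{Z}/2$ sends $2$ to $0$. Lifting a chosen null-homotopy through the fibration (\ref{diffball}) yields a disk $\tilde{H}: D^2 \to \mathrm{Diff}_0(Y)$ with $\tilde{H}|_{\partial D^2} = \tilde{f}$. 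This is where $Y$ aspherical enters: the $\pi_1 Y$-component of the projection of $\tilde{f}$ is already zero, so I can first homotope $\tilde{f}$ to satisfy $\tilde{f}_t(0) = 0$ for all $t$, and then use $\pi_2 Y = 0$ to choose the null-homotopy (and hence $\tilde{H}$) so that $\tilde{H}_{s, t}(0) = 0$ for all $(s, t) \in D^2$.

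Now I apply Moser's action $f \mapsto f_\ast \xi$ from Lemma \ref{lem:Gray} to $\tilde{H}$, producing a disk in $\mathcal{C}(Y, \xi)$ whose boundary is the loop $\{(\tilde{f}_t)_\ast \xi\}_t$. Since $\tilde{f}$ takes values in $\mathrm{Cont}_0(Y, \xi)$, this boundary is the constant loop $\xi$, so the disk descends to a map $\sigma: S^2 \to \mathcal{C}(Y, \xi)$. Because $\tilde{H}_{s, t}(0) = 0$, we have $(\tilde{H}_{s, t})_\ast \xi(0) = d(\tilde{H}_{s, t})_0 (\xi(0))$, so $ev_B \circ \sigma$ factors as the composition of the null-homotopy $D^2 \to \mathrm{SO}(3)$ with the orbit map $\mathrm{SO}(3) \to S^2$, $A \mapsto A \xi(0)$. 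By the standard identification $\partial : \pi_2 S^2 \xrightarrow{\cdot 2} \pi_1 \mathrm{U}(1)$ for the Hopf fibration $\mathrm{U}(1) \to \mathrm{SO}(3) \to S^2$, any null-homotopy of the winding-$2$ loop realises the generator of $\pi_2 S^2$; hence $ev_B \circ \sigma$ has degree one, so $\sigma$ is a homotopy section of $ev_B$ and $\mathcal{O}_\xi = 0$.

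The main obstacle I expect is the basepoint adjustment in the second paragraph: without $\pi_2 Y = 0$ one cannot ensure $\tilde{H}_{s, t}(0) = 0$ over the interior of $D^2$, and without this the Moser push-forward at $0$ fails to reduce to the linear action of $d\tilde{H}_0$ on $\xi(0)$, breaking the degree computation. This is precisely where asphericity of $Y$ is indispensable.
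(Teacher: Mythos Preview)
Your argument has a genuine gap at the lifting step. You write that ``lifting a chosen null-homotopy through the fibration (\ref{diffball}) yields a disk $\tilde{H}: D^2 \to \mathrm{Diff}_0(Y)$ with $\tilde{H}|_{\partial D^2} = \tilde{f}$,'' but the homotopy lifting property does not produce this. Applied to the null-homotopy $S^1 \times [0,1] \to \mathrm{Emb}(\mathbb{B}^3, Y)$ with initial lift $\tilde{f}$, it only gives a \emph{cylinder} $S^1 \times [0,1] \to \mathrm{Diff}_0(Y)$ whose terminal end is some loop $g$ in the fibre $\mathrm{Diff}_0(Y, B)$, not a constant. To cap this to a disk you must know that $[g] = 0$ in $\pi_1 \mathrm{Diff}_0(Y, B)$ (equivalently that $[\tilde{f}] = 0$ in $\pi_1 \mathrm{Diff}_0(Y)$). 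This is exactly the hard input in the paper's proof: one shows $\pi_1 \mathrm{Diff}(Y, B) = 0$ for aspherical $Y$, which rests on the full computation of the homotopy type of $\mathrm{Diff}_0(Y)$ due to Hatcher, Ivanov, Gabai, McCullough--Soma, et al. Your invocation of asphericity---only $\pi_2 Y = 0$, used for the basepoint adjustment---is far weaker and holds already for any irreducible $Y$.

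Without closing the cylinder, your construction gives only a map $D^2 \to \mathcal{C}(Y, \xi)$ whose boundary is the loop $g_\ast \xi$ in $\mathcal{C}(Y, \xi, B)$. Your degree computation (which is fine as far as it goes) then shows that $\pm\mathcal{O}_\xi = [g_\ast \xi]$ lies in the image of $\pi_1 \mathrm{Diff}_0(Y, B) \to \pi_1 \mathcal{C}(Y, \xi, B)$, i.e.\ that $\delta'(\mathcal{O}_\xi) = 0$ by exactness of (\ref{moserB}). But that is precisely the hypothesis $\tau_{\partial B}^2 = 0$, so the argument is circular. Once one does grant $\pi_1 \mathrm{Diff}(Y, B) = 0$, your explicit section does work---but at that point the paper's one-line deduction from the exact sequence of (\ref{moserB}) is more direct.
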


\begin{proof}
By the fibration (\ref{moserB}) we have the exact sequence
\begin{align*}
    \pi_1 \mathrm{Diff}_0 (Y, B ) \rightarrow \pi_1 \mathcal{C}(Y, \xi , B ) \rightarrow \pi_0 \mathrm{Cont}_0 (Y, \xi , B )
\end{align*}
so by Proposition \ref{moserdehn} the result will follow from $\pi_1 \mathrm{Diff}_0(Y, B ) = 0$. Let us now explain why the latter group vanishes. By the fibration (\ref{diffball}) we have an exact sequence
\begin{align*}
     1 \rightarrow \pi_1 \mathrm{Diff}_0(Y , B ) \rightarrow \pi_1 \mathrm{Diff}_0(Y  ) \rightarrow \pi_1 \mathrm{Fr}(Y ) \cong \pi_1 Y \times \mathbb{Z}_2 .
\end{align*}
Here, to have a $1$ on the left we use $\pi_2 Y = 0$ (which follows from $Y$ being aspherical). Since the homomorphism $\pi_1 \mathrm{Diff}_0(Y  ) \rightarrow  \pi_1 Y$, and hence $\pi_1 \mathrm{Diff}_0(Y  ) \rightarrow \mathrm{Fr}(Y)$, is injective it follows by exactness that $\pi_1 \mathrm{Diff}_0(Y , B )=0$ as required. The fact that $\pi_1 \mathrm{Diff}_0 (Y) \rightarrow \pi_1 Y$ is injective follows from the calculation of the homotopy type of the group $\mathrm{Diff}_0 (Y)$ for all aspherical\footnote{For the irreducible $3$-manifolds with finite fundamental group, the calculation of the homotopy type of $\mathrm{Diff}_0 (Y)$ has also been completed \cite{elliptic,bk1,bk2}. Thus, the homotopy type of $\mathrm{Diff}_0 (Y)$ is known for all prime $3$-manifolds.} $3$-manifolds. More precisely, the papers \cite{hatcher1, hatchersmale,S1xS2,gabai,ivanov,mcsoma} cover all aspherical $3$-manifolds with the exception of the non-Haken infranil manifolds (see \cite{mcsoma} for a nice summary). The latter consist of the non-trivial $S^1$-bundles over $T^2$, which are covered by \cite{bk3}. In all these cases $\mathrm{Diff}_0 (Y)$ has the homotopy type of $(S^1)^k$ where $k$ is the rank of the center of $\pi_1 Y$ and $\pi_1 \mathrm{Diff }_0(Y) \rightarrow \pi_1 Y$ is the inclusion of the center. The proof is now complete.
\end{proof}

In the local model $(Y, \xi ) = (\mathbb{B}^3 , \xi_{\mathrm{st}} = \mathrm{Ker}(dz + \frac{1}{2}xdy - \frac{1}{2}ydx ) )$, and letting $B \subset \mathbb{B}^3$ be any concentric sub-ball, we have the following unique characterisation of the obstruction class:

\begin{Lemma}\label{Lemma:Olocal}
The evaluation  of contact structures on $\mathbb{B}^3$ along the radial line $\{ (0 , 0 , z) \, | \, z \in [0,1] \} \subset \mathbb{B}^3$ identifies the evaluation fibration on $\mathcal{C}(\mathbb{B}^3 ,\xi_\mathrm{st} )$ with the path fibration on $S^2$:
\[
\begin{tikzcd}
\mathcal{C}(\mathbb{B}^3 , \xi_\mathrm{st} , B ) \arrow{r} \arrow{d}{\simeq} & \mathcal{C}(\mathbb{B}^3 , \xi_\mathrm{st} ) \arrow{r}{ev_B} \arrow{d}{\simeq} & S^2 \arrow{d}{=}\\
\Omega S^2 \arrow{r} & P S^2 \arrow{r} & S^2.
\end{tikzcd}
\]
Thus, the obstruction class $\mathcal{O}_{\xi_{\mathrm{st}} } \in \pi_1 \mathcal{C}(\mathbb{B}^3, \xi_{\mathrm{st}} , B )$ corresponds to the standard generator of $\pi_1 \Omega S^2 = \pi_2 S^2 $.\end{Lemma}

\begin{proof}
By the Eliashberg--Mischachev Theorem \cite{EM}, the space $\mathcal{C}(\mathbb{B}^3 , \xi_\mathrm{st} )$ is contractible. So is the path space $PS^2$, so the desired result follows.
\end{proof}

\subsubsection{Geometric description of the obstruction class}

It is instructive to describe an explicit loop $ (\xi_\varphi )_{\varphi \in S^1}$  of contact structures on $Y$ fixed over a Darboux ball $B \subset Y$ which represents the obstruction class $\mathcal{O}_\xi \in \pi_1 \mathcal{C}(Y, \xi , B )$, and we do this now. However, we won't use this construction in the remainder of the article.

By definition of the connecting map $\delta : \pi_2 S^2 \rightarrow \pi_1 \mathcal{C}(Y, \xi, B )$ associated to the fibration (\ref{ev}), a based loop $\xi_{\varphi} \in \mathcal{C}(Y, \xi , B )$ represents the obstruction class $\mathcal{O}_{\xi}$ precisely when there exists a $\mathbb{B}^2$-family of contact structures $\xi_{r, \varphi} \in \mathcal{C}(Y, \xi )$ (here, the unit $2$-ball $\mathbb{B}^2$ is parametrised using polar coordinates $(r, \varphi )$ ) with $\xi_{r, 0 } = \xi$ such that $\xi_{1, \varphi} = \xi_{\varphi}$ and the mapping $\mathbb{B}^2 \ni (r, \varphi ) \mapsto ev_B (\xi_{r,\varphi} ) = \xi_{r , \varphi} (p ) \in S^2$ induces a \textit{degree one} mapping $\mathbb{B}^2 / \partial \mathbb{B}^2 \rightarrow S^2$ (note that the map out of $\mathbb{B}^2 / \partial \mathbb{B}^2$ is well-defined because $\xi_{1 , \varphi} (p )$ is constant in $\varphi$). 

Such a family $\xi_{r, \varphi}$ can be constructed as follows. First, it suffices to consider the case $(Y, \xi ) = (\mathbb{B}^3 , \xi_{\mathrm{st}} )$, and construct a family $\xi_{r, \varphi} \in \mathcal{C}(\mathbb{B}^3 , \xi , B )$ as above (and with $\xi_{r,\varphi} = \xi_{\mathrm{st}}$ near $\partial \mathbb{B}^3$). For this, choose any smooth mapping $q : [0,1] \times S^1 \rightarrow \mathrm{SO}(3)/\mathrm{U}(1)$ with 
\[
q(0 , \varphi ) = q (1, \varphi ) = [\mathrm{id}]  \, , \quad q (r, 0 ) = q(r, 2 \pi ) = [\mathrm{id}] 
\]
such that the induced map $\Sigma S^1 \rightarrow \mathrm{SO}(3) / \mathrm{U}(1)$ has degree one. Here $\Sigma S^1 \approx S^2$ is the reduced suspension of $S^1$, and in what follows we regard $S^1$ as $\mathbb{R}/2 \pi \mathbb{Z}$. Next, regarding $q$ as a homotopy of based maps $S^1 \rightarrow \mathrm{SO}(3) / \mathrm{U}(1)$, we may lift it along the fibration $\mathrm{SO}(3) \rightarrow \mathrm{SO}(3)/\mathrm{U}(1)$ to produce a family of matrices $A_{r, \varphi} \in \mathrm{SO}(3)$ parametrised by $(r, \varphi ) \in [0,1] \times S^1$ such that 
\begin{align}
[A_{r, \varphi }] = q(r, \varphi ) \, , \quad A_{0 , \varphi } = \mathrm{id} \, \quad A_{r , 0 } = A_{r, 2 \pi } = \mathrm{id}. \label{condition}
\end{align}

Because of the second condition in (\ref{condition}), it follows that $A_{r , \varphi}$ is, in fact, a family of matrices parametrised by the unit $2$-ball $\mathbb{B}^2 \cong [0 , 1] \times S^1 / 0 \times S^1$. At this point, we would like to take the $\mathbb{B}^2$-family of contact structures on $\mathbb{B}^3$ given by $\xi_{r , \varphi} = (A_{r, \varphi})_\ast \xi_{\mathrm{st}}$. By the first condition in (\ref{condition}) we then have $A_{1, \varphi} \in \mathrm{U}(1)$, and from this it follows that $\xi_{1 , \varphi}$ is a loop of contact structures on $\mathbb{B}^3$ fixed over the ball $B \subset \mathbb{B}^3$ (in fact this loop is constant everywhere on $\mathbb{B}^3$, $\xi_{1 , \varphi} = \xi_{\mathrm{st}}$) and the induced map $\mathbb{B}^2 / \partial \mathbb{B}^2 \rightarrow S^2$ given by $(r, \varphi ) \mapsto \xi_{r, \varphi}(0 )$ has degree one, as required. However, the $\mathbb{B}^2$-family $\xi_{r , \varphi}$ is not constant near the boundary of $\mathbb{B}^3$, so we must appropriately "cut off" this family near the boundary.

We can do this as follows. Introduce a smooth cutoff function $\beta$ on $ \mathbb{B}^3$ which is identically one over $B \subset \mathbb{B}^3$ and vanishes near $\partial \mathbb{B}^3$. For each $(r , \varphi ) \in [0,1] \times S^1$ we consider the following vector field supported in the interior of $\mathbb{B}^3$ 
\[
V_{r, \varphi} (x) = \beta (x) \frac{\partial}{\partial r} A_{r, \varphi} \cdot x
\]
We regard $V_{r, \varphi}$ as a $\varphi$-family of $r$-dependent vector fields on $\mathbb{B}^3$, and consider the associated $\varphi$-family of flows $\Phi_{\varphi}^r : \mathbb{B}^3 \rightarrow \mathbb{B}^3$ starting at time $r = 0$, namely
\[
\frac{\partial}{\partial r} \Phi_{\varphi}^r (x) = V_{r, \varphi} ( \Phi_{ \varphi}^{r} (x) ) \, , \quad \Phi_{ \varphi}^{0} (x) = x \, .
\]
Over the ball $B \subset \mathbb{B}^3$ we have, by construction, that $\Phi_{\varphi}^r (x) = A_{r,\varphi} \cdot x$, and near the boundary of $\mathbb{B}^3$ we have $\Phi_{r, \varphi} = \mathrm{id}$. Hence, the $\mathbb{B}^2$-family of contact structures defined by $\xi_{r, \varphi} := (\Phi_{\varphi}^r)_\ast \xi_{\mathrm{st}}$ is now constant near the boundary of $\mathbb{B}^3$, and still has the required properties.

The loop $\xi_{1 , \varphi}$ of contact structures in $\mathcal{C}(\mathbb{B}^3 , \xi_{\mathrm{st}} , B )$ thus constructed is then an explicit representative of the obstruction class $\mathcal{O}_\xi \in \pi_1 \mathcal{C}(\mathbb{B}^3 , \xi_{\mathrm{st}} , B )$. This loop can then be implanted into an arbitrary contact $3$-manifold $(Y, \xi)$ along a Darboux chart $(\mathbb{B}^3 , \xi_\mathrm{st}) \subset (Y, \xi )$ to give a representative of the obstruction class for arbitrary $(Y , \xi )$.

\subsection{Formal triviality of $\tau_{\partial B}^2$}

We continue in the setting of the previous section, and we show

\begin{Lemma}\label{ftrivialtwist}
Suppose the Euler class of $\xi$ vanishes. Then both the loop of contact structures given by the obstruction class $\mathcal{O}_\xi \in \pi_1 \mathcal{C}(Y, \xi , B )$ and the squared Dehn twist $\tau_{\partial B}^2 \in \pi_0 \mathrm{Cont}_0 (Y, \xi , B )$ are formally trivial rel. $B$.
\end{Lemma}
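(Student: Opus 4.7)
The plan is to first prove that the loop $\mathcal{O}_\xi$ is formally trivial rel.\ $B$ and then deduce the formal triviality of $\tau_{\partial B}^2$ rel.\ $B$ from it.

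For the reduction, I would consider the formal counterpart of the evaluation fibration (\ref{ev}): the map $\Xi(Y,\xi) \to S^2$, $\eta \mapsto \eta(0)$, is again a fibration, by essentially the same argument as in Lemma \ref{evfib} (in fact easier, since there is no contactness to preserve), with fiber $\Xi(Y,\xi,B)$ over $\xi(0)$. The inclusion $\mathcal{C}(Y,\xi) \hookrightarrow \Xi(Y,\xi)$ intertwines the two evaluation fibrations, so by naturality of connecting maps the image of $\mathcal{O}_\xi$ under $\pi_1 \mathcal{C}(Y,\xi,B) \to \pi_1 \Xi(Y,\xi,B)$ is precisely the connecting image of the generator of $\pi_2 S^2$ in the formal fibration. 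Hence a (homotopy) section of $\Xi(Y,\xi) \to S^2$ forces the formal triviality of $\mathcal{O}_\xi$. Granting this, the formal triviality of $\tau_{\partial B}^2$ follows from Proposition \ref{moserdehn} together with the ball-relative analog of Corollary \ref{comp}: since $\tau_{\partial B}^2 = \delta'(\mathcal{O}_\xi)$ is the image under $\delta'$ of a formally trivial class, and $\delta'$ is compatible with the formal version of the fibration (\ref{moserB}) supplied by the relative analog of Lemma \ref{moserflemma}, it follows that $\tau_{\partial B}^2$ is formally trivial rel.\ $B$.

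The key step is then to construct the section of the formal evaluation, and this is where the Euler class hypothesis enters. Since $e(\xi) = 0$, the oriented rank-two bundle $\xi$ is trivializable. Coupled with the trivialization of the oriented normal line bundle $\nu = TY/\xi$ given by coorientation, one obtains a trivialization $\psi : TY \xrightarrow{\cong} Y \times \mathbb{R}^3$ in which $\xi$ corresponds to the constant $2$-plane field $\mathbb{R}^2 \times \{0\}$ cooriented by $e_3$. For each $v \in S^2 \subset \mathbb{R}^3$ I would define a $2$-plane field $\eta_v$ on $Y$ by $\eta_v(p) := \psi_p^{-1}(v^{\perp})$, cooriented by the pullback of $v$. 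The assignment $v \mapsto \eta_v$ is a continuous map $s : S^2 \to \Xi(Y)$ satisfying $s(e_3) = \xi$, so by connectedness of $S^2$ the map $s$ factors through $\Xi(Y,\xi)$; by construction, evaluating $\eta_v$ at $0$ recovers $v$ under the identification of $S^2$ with co-oriented planes in $T_0 Y$. Thus $s$ is an honest section of the formal evaluation.

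The substantive content of the proof is confined to the construction of $s$, which is straightforward once Euler class vanishing is invoked. The remaining work is bookkeeping: extending Lemma \ref{moserflemma} and Corollary \ref{comp} to the ball-relative setting we need here. Both extensions are routine applications of the same fibrant-replacement argument used in the absolute case, so I expect no real obstacle beyond careful compatibility checks on the relevant commuting diagrams of fibrations.
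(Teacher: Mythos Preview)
Your proposal is correct and follows essentially the same approach as the paper: both construct a section of the formal evaluation fibration $\Xi(Y,\xi)\to S^2$ using the vanishing of the Euler class, and then invoke the ball-relative analogue of Corollary \ref{comp} to pass from formal triviality of $\mathcal{O}_\xi$ to that of $\tau_{\partial B}^2$. Your ``constant plane fields'' $\eta_v$ built from a trivialization of $TY$ adapted to $\xi$ are exactly the paper's section by ``constant maps $Y\to S^2$'' under the identification $\Xi(Y,\xi)\simeq \mathrm{Map}_0(Y,S^2)$.
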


\begin{proof}
On the space of co-oriented plane fields we have an analogous evaluation mapping (a fibration also, in fact)

$$\Xi (Y , \xi ) \rightarrow S^2 \quad , \quad \xi^\prime \mapsto \xi^\prime (0 ) .$$

When the Euler class of $\xi$ vanishes then we may identify $\Xi (Y, \xi )$ with the space $\mathrm{Map}_0 (Y_ , S^2 )$ of null-homotopic smooth maps $Y \rightarrow S^2$. The evaluation mapping becomes identified with the obvious evaluation mapping on this latter space. Clearly this fibration admits a section given by the constant maps $Y \rightarrow S^2$. Hence, the corresponding obstruction class vanishes, and hence $$ \mathcal{O}_\xi \in \mathrm{Ker} \big( \pi_1 \mathcal{C}(Y, \xi , B ) \rightarrow \pi_1 \Xi (Y, \xi , B ) \big)$$ 
so $\mathcal{O}_\xi$ is formally trivial. From the rel. $B$ analogue of Corollary \ref{comp} it follows that $\tau_{\partial B}^2$ is formally trivial also. \end{proof}

\subsection{Behaviour of $\mathcal{O}_\xi$ under sum}

We proceed by discussing how the obstruction class $\mathcal{O}_\xi$ from (\ref{obstr}) interacts with formation of connected sums. 

First, we briefly review a convenient model for the contact connected sum \cite{colin,geiges}. We write $(Y_0,\xi_0)=([-1,1]\times S^2,\ker(zds+\frac{1}{2}xdy-\frac{1}{2}ydx))$. Let $(Y_{\pm} , \xi_{\pm} ) $ be two contact $3$-manifolds with Darboux balls $B_\pm   \subset Y_\pm$, and coordinates $\phi_{\pm}:(Y_0,\xi_0)\hookrightarrow (Y_\pm,\xi_\pm)$ around $\partial B_{\pm}$ such that $\phi_{\pm}(\{0\}\times S^2)=\partial B_\pm$ and $\phi_\pm(Y_0)\cap Y_\pm\backslash B_\pm=\phi_\pm((0,1]\times S^2)$. Consider the smaller ball $B^0_\pm=B_\pm\backslash \phi_\pm(Y_0) \subset B_\pm$. To define the contact connected sum we use the gluing contactomorphism $G$ of $(Y_0,\xi_0)$ given by $G(s,x,y,z)=(-s,-x,-y,-z)$. 
\begin{Definition}\label{connectedsum}
The \textit{connected sum} of contact manifolds $$(Y_\# , \xi_\# ) = (Y_- , \xi_- ) \# (Y_+ , \xi_+ ) $$ is defined is defined to be $(Y_-\backslash B^0_-,\xi_-)\cup_G (Y_+\backslash B^0_+,\xi_+)$.
\end{Definition}

The connected sum of contact manifolds is well-defined and independent of choices up to contactomorphism \cite{colin}.


We will fix a Darboux ball $B_\# \subset Y_\#$ inside the neck region $[-1,1]\times  S^2=\phi_-(Y_0)=\phi_+(Y_0) \subset Y_\#$. We also have natural inclusions $\mathcal{C}(Y_{\pm} , \xi_{\pm} , B_{\pm} ) \subset \mathcal{C}(Y_\# , \xi_\# , B_\# )$. We consider their induced maps on $\pi_1$
\begin{align*}
   (-) \# \xi_+ : \pi_1 \mathcal{C}(Y_- , \xi_- , B_- ) \rightarrow \pi_1 \mathcal{C}(Y_\# , \xi_\# , B_\# )\\
    \xi_- \# (-) : \pi_1 \mathcal{C}(Y_+ , \xi_+ , B_+ ) \rightarrow \pi_1 \mathcal{C}(Y_\# , \xi_\# , B_\# )
\end{align*}

\begin{Proposition}\label{obstrsum}
The obstruction class $\mathcal{O}_{\xi_\#} \in \pi_1 \mathcal{C}(Y_\#, \xi_\# , B_\# )$ is given by $$\mathcal{O}_{\xi_\#} = ( \mathcal{O}_{\xi_-} \# \xi_+ ) \cdot ( \xi_- \# \mathcal{O}_{\xi_+} ) .$$
\end{Proposition}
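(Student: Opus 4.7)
The strategy is to exhibit a $D^2$-family of contact structures in $\mathcal{C}(Y_\#, \xi_\#)$ whose evaluation at the center of $B_\#$ has degree one and whose boundary loop in $\mathcal{C}(Y_\#, \xi_\#, B_\#)$ is, by construction, the concatenation $(\mathcal{O}_{\xi_-}\#\xi_+)\cdot(\xi_-\#\mathcal{O}_{\xi_+})$. The identity will then follow immediately from the definition of $\mathcal{O}_{\xi_\#}$ as the image of $1 \in \pi_2 S^2$ under the connecting map of the evaluation fibration (\ref{ev}).

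First, I would fix convenient models for $\mathcal{O}_{\xi_\pm}$. By the construction in the proof of Lemma \ref{evfib}, each $\mathcal{O}_{\xi_\pm}$ is realised as the boundary loop of a $D^2$-family $\tilde\xi^{(\pm)}_d = (\phi^{(\pm)}_d)_\ast \xi_\pm$, where $\phi^{(\pm)}_d$ is a $D^2$-family of diffeomorphisms of $Y_\pm$ compactly supported in $B_\pm$ realising a degree-one map $D^2 \to \mathrm{SO}(3)/\mathrm{SO}(2) \cong S^2$ via its action on the central tangent frame. After applying the homotopy equivalence of Lemma \ref{evfib}, we may further assume that on $\partial D^2$ the contact structures $\tilde\xi^{(\pm)}_d$ lie in $\mathcal{C}(Y_\pm, \xi_\pm, B_\pm)$, so that the connected-sum maps $i_\pm$ apply directly to these boundary loops to produce representatives of $(\mathcal{O}_{\xi_-}\#\xi_+)$ and $(\xi_-\#\mathcal{O}_{\xi_+})$ in $\mathcal{C}(Y_\#, \xi_\#, B_\#)$.

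Next, I would assemble the $D^2$-family $\tilde\eta$ on $(Y_\#, \xi_\#)$ from these two ingredients using the neck model of Definition \ref{connectedsum}. Decomposing the disk as $D^2 = D^2_-\cup D^2_+$ along a diameter, on each half-disk $D^2_\pm$ I would deploy the family $\tilde\xi^{(\pm)}_d$ inside the summand $Y_\pm \subset Y_\#$, while keeping the fixed structure on the opposite summand. Along the common diameter the two one-sided families are linked by a path of contactomorphisms supported in a neighborhood of $B_\#$, chosen so that at the center of $B_\#$ the combined family $\tilde\eta_d$ sweeps out $S^2$ with total degree one. By construction, the restriction of $\tilde\eta$ to $\partial D^2 = \partial D^2_-\cup\partial D^2_+$ then reproduces the concatenation of the loops $i_-(\mathcal{O}_{\xi_-})$ and $i_+(\mathcal{O}_{\xi_+})$, yielding the proposition.

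The main obstacle is the degree-one verification of $ev_{B_\#}\circ\tilde\eta$. Since each one-sided family $\tilde\xi^{(\pm)}_d$ has support disjoint from $B_\#$, its own evaluation at $0 \in B_\#$ is constant and contributes degree zero; the full degree must therefore be concentrated in the interpolating family along the common diameter. One must verify that the interpolating family of contactomorphisms supported near $B_\#$ can be chosen with the correct net rotation of the central frame, consistently with Proposition \ref{moserdehn} and the computation $\pi_2 S^2 \to \pi_1\mathrm{SO}(2)$ being multiplication by $2$. This is the technical core of the argument and relies on the explicit Liouville model of the connected-sum neck from Definition \ref{connectedsum} together with the parametric Darboux theorem to guarantee that the interpolation can be performed with the required rotational behavior.
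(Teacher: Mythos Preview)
Your strategy differs from the paper's, and the obstacle you flag in your last paragraph is genuine and not overcome by the argument you sketch. The paper does not split $D^2$ into two halves. Instead it builds a \emph{single} $\mathbb{B}^2$-family of flows $\phi^\varphi_r$ on $\mathbb{R}^3$ (supported in the ball $B$) and applies these flows directly to the symplectic form $\omega_{\mathrm{st}}$ and Liouville field $v_{\mathrm{st}}$ on the $1$-handle $H \subset \mathbb{R}^4$ of Definition \ref{connectedsum}. Because the handle model uses the same $(x,y,z)$ coordinates on both hyperplanes $\{t = \pm 1\}$ (identified with $B_\pm \subset Y_\pm$), this one family simultaneously produces the families $\xi^\pm_{r,\varphi}$ on $Y_\pm$ and the family $\xi_{\#,r,\varphi}$ on $Y_\#$, with the tautological relation $\xi_{\#,r,\varphi} = \xi^-_{r,\varphi} \# \xi^+_{r,\varphi}$ holding for all $(r,\varphi)$. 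The degree-one evaluation at $B_\#$ and the factorisation of the boundary loop as $(\mathcal{O}_{\xi_-}\#\xi_+)\cdot(\xi_-\#\mathcal{O}_{\xi_+})$ thus come from the same construction, with no interpolation step.

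In your approach there are two related problems. First, the half-disk families $\tilde\xi^{(\pm)}_d$ live in $\mathcal{C}(Y_\pm,\xi_\pm)$ and are \emph{not} rel $B_\pm$ in the interior of $D^2_\pm$, so they do not extend to $Y_\#$ via the inclusions $i_\pm$; only their boundary loops do. You have not said what the family on $Y_\#$ actually is over the interiors of $D^2_\pm$. Second, and more seriously, even granting some extension with support disjoint from $B_\#$, your own observation is fatal: $ev_{B_\#}$ is then constant on each half-disk, and a one-dimensional interpolation along the diameter cannot supply two-dimensional degree --- the resulting map $D^2 \to S^2$ factors through a one-dimensional set and has degree zero. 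To repair this you would need a genuine $2$-cell near the diameter on which $ev_{B_\#}$ sweeps $S^2$, but the boundary of that cell then contributes an extra loop in $\mathcal{C}(Y_\#,\xi_\#,B_\#)$ that you must show is trivial, which is essentially the statement you are trying to prove. The paper's handle-based construction is not a technicality but the mechanism that makes the identity visible.
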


\begin{proof}
It suffices to prove the corresponding statement in the local model where $(Y_\pm , \xi_\pm) = (\mathbb{B}^3 , \xi_\mathrm{st} ) $, $B_\pm \subset Y_\pm$ are smaller concentric sub-balls and $(Y_\# , \xi_\# ) = (Y_- \setminus B_-, \xi_- ) \cup_{\partial B_- = - \partial B_+} (Y_+ , \xi_+ ).$ 
We let $B_\# \subset Y_\#$ be a Darboux chart contained in the interior. 

Consider the map $j : \mathcal{C}(Y_- , \xi_- , B_- ) \times \mathcal{C}(Y_+ , \xi_+ , B_+ ) \rightarrow \mathcal{C}(Y_\#, \xi_\# , B_\# )$ given by gluing together the contact structures on the pieces $Y_\pm \setminus B_\pm$ to produce a contact structure on $Y_\#$. Choose paths $\gamma_\pm \subset Y_\#$ which connect the ball $B_\#$ to the component $\partial Y_\pm $ of the boundary of $Y_\#$. There is a commuting diagram
\[
\begin{tikzcd}
\mathcal{C}(Y_\#, \xi_\#, B_\# )  \arrow{r}{ev_{\gamma_-} \times ev_{\gamma_+}} & (\Omega S^2)^2\\
\mathcal{C}(Y_- , \xi_- , B_- ) \times \mathcal{C}(Y_+ , \xi_+ , B_+ ) \arrow{u}{j} \arrow{r}{\simeq}  & (\Omega S^2)^2 \arrow{u}{=}
\end{tikzcd}
\]
where the bottom horizontal map is given by the homotopy equivalences of Lemma \ref{Lemma:Olocal}. By a similar argument to that in the proof of Lemma \ref{Lemma:Olocal} one shows that the top horizontal map is a homotopy equivalence. Thus $j$ is also a homotopy equivalence.

Next, consider the following diagram of spaces and maps, where the bottom row is two copies of the path fibration over $S^2$, and $\Delta$ is the diagonal map.
\[
\begin{tikzcd}
\mathcal{C}(Y_\# , \xi_\# , B_\# ) \arrow{d}{ev_{\gamma_-} \times ev_{\gamma_+}} \arrow{r}  &  \mathcal{C}(Y_\# , \xi_\# ) \arrow{d}{ev_{\gamma_-} \times ev_{\gamma_+}} \arrow{r} {ev_{B_\#} } & S^2 \arrow{d}{\Delta } \\ 
(\Omega S^2 )^2 \arrow{r} & (P S^2 )^2 \arrow{r} &(S^2)^2 .
\end{tikzcd}
\]

The path fibration on $S^2$ can be identified with the evaluation fibration on $\mathcal{C}(Y_\pm, \xi_\pm )$ at the center of $B_\pm$, by Lemma \ref{Lemma:Olocal}. Under this identification, the left-most vertical map in the second diagram becomes the homotopy inverse of the map $j$ (which follows from the first diagram). Combining these observations and passing to the long exact sequence in homotopy groups in the second diagram yields a commutative square
\[
\begin{tikzcd}
\pi_2 S^2 \arrow{r}{\delta} \arrow{d} &   \pi_1 \mathcal{C}(Y_\# , \xi_\# , B_\# )  \\
\pi_2 S^2 \times \pi_2 S^2  \arrow{r}{\delta \times \delta} & \pi_1 \mathcal{C}(Y_{-}\setminus B_- , \xi_-  ) \times \pi_1 \mathcal{C}(Y_{+} \setminus B_+ , \xi_+ ) \arrow{u}{j} \end{tikzcd}
\]
from which the desired result follows.
\end{proof}

\begin{Remark} In particular, it follows from Propositions \ref{obstrsum} and \ref{moserdehn} that we have the relation $\tau_{\partial B_+}^2 \tau_{\partial B_-}^2 = \tau_{B_\#}^2$ in $\pi_0 \mathrm{Cont}_0 ( Y_\# , \xi_\# ,  \partial B_\# )$. 
\end{Remark}


\subsection{Examples: trivial Dehn twists}

For comparison with Theorem \ref{mainthm} we now exhibit examples where the squared Dehn twist on a connected sum becomes trivial as a contactomorphism.

\subsubsection{Quotients of $S^3$}
 Let $\Gamma$ be a finite subgroup of $\mathrm{U}(2)$. Then $\Gamma$ preserves the standard contact structure $\xi_{\mathrm{st}} = \mathrm{ker}(\sum_{j = 1,2} x_j dy_j - y_j dx_j )$ on the unit $3$-sphere $S^3$, so it descends onto the quotient $M_\Gamma = S^3 /\Gamma$. The $M_\Gamma$'s are the spherical $3$-manifolds and include, among others, the lens spaces $L(p,q)$ and the Poincaré sphere $\Sigma(2,3,5)$.

\begin{Lemma}\label{quotients}
The squared Dehn twist $\tau_{\partial B}^2$ on the boundary of a Darboux ball $B \subset M_\Gamma$ is contact isotopic to the identity rel. $B$. Hence the squared Dehn twist $\tau_{S_\#}^2$ on the separating sphere $S_\#$ in $(Y , \xi ) \# ( M_\Gamma , \xi_{\mathrm{st}} )$ is contact isotopic to the identity.
\end{Lemma}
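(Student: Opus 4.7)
My plan is to first prove the assertion for $M_\Gamma$ (that $\tau_{\partial B}^2$ is contact isotopic to the identity rel $B$) and then deduce the corresponding statement for the connected sum using the reparametrisation argument from the Sketch of Proof of Theorem \ref{mainthm}(A).

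For the first part, I will use Lemma \ref{reparamdefn}, which identifies $\tau_{\partial B}$ in $\pi_0 \mathrm{Cont}(M_\Gamma, \xi_{\mathrm{st}}, B)$ with the image of the generator $1 \in \pi_1 \mathrm{U}(1)$ under the reparametrisation-plus-connecting map $\pi_1 \mathrm{U}(1) \to \pi_1 \mathrm{Emb}((\mathbb{B}^3, \xi_{\mathrm{st}}), (M_\Gamma, \xi_{\mathrm{st}})) \to \pi_0 \mathrm{Cont}(M_\Gamma, \xi_{\mathrm{st}}, B)$ associated to the fibration (\ref{contball}). By exactness, it suffices to exhibit a loop in $\mathrm{Cont}_0(M_\Gamma, \xi_{\mathrm{st}})$ whose orbit of a fixed Darboux ball realises the reparametrisation class in $\pi_1 \mathrm{Emb}$; this will even yield the stronger conclusion that $\tau_{\partial B}$ itself, not merely $\tau_{\partial B}^2$, is isotopic to the identity rel $B$.

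The candidate loop is the Hopf action: I will exploit that the centre $\mathrm{U}(1)_H = \{e^{i\theta}I\} \subset \mathrm{U}(2)$ commutes with $\Gamma$, so it descends to a contact $\mathrm{U}(1)$-action $\theta \mapsto \Phi_\theta$ on $M_\Gamma$ (the Reeb flow of $\alpha_{\mathrm{st}}$). To compare its image in $\pi_1 \mathrm{Emb} \simeq \pi_1 \mathrm{CFr}(M_\Gamma, \xi_{\mathrm{st}})$ with the reparametrisation class, my strategy is to lift everything to the universal cover $S^3$. Under the trivialisation $\mathrm{CFr}(S^3, \xi_{\mathrm{st}}) \simeq S^3 \times \mathrm{U}(1)$, the Hopf and reparametrisation loops based at $\tilde p$ correspond to $\theta \mapsto (e^{i\theta}\tilde p, e^{i\theta})$ and $\theta \mapsto (\tilde p, e^{i\theta})$ respectively; these are manifestly homotopic via $(e^{is\theta}\tilde p, e^{i\theta})$ for $s \in [0,1]$, using crucially the simple connectivity of $S^3$. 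Since $\Gamma$ acts freely on $S^3$, the induced covering $\mathrm{CFr}(S^3, \xi_{\mathrm{st}}) \to \mathrm{CFr}(M_\Gamma, \xi_{\mathrm{st}})$ is a fibrewise isomorphism on $\mathrm{U}(1)$-fibers, so the homotopy descends and identifies the two classes in $\pi_1 \mathrm{CFr}(M_\Gamma, \xi_{\mathrm{st}})$ (with $\pi_2 M_\Gamma = 0$ ensuring the injectivity of the fiber inclusion $\pi_1 \mathrm{U}(1) \hookrightarrow \pi_1 \mathrm{CFr}(M_\Gamma, \xi_{\mathrm{st}})$). This yields the desired triviality $\tau_{\partial B}^2 = 0$ in $\pi_0 \mathrm{Cont}_0(M_\Gamma, \xi_{\mathrm{st}}, B)$.

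For the second assertion, I set $(Y_+, \xi_+) = (M_\Gamma, \xi_{\mathrm{st}})$ in the decomposition $Y_\# = Y_- \# Y_+$. By Lemma \ref{reparamLemma}, $\tau_{\partial B_-}^{-1}\tau_{\partial B_+}$ lies in the kernel of $\pi_0 \mathrm{Cont}(Y_\#, \xi_\#, S_\#) \to \pi_0 \mathrm{Cont}(Y_\#, \xi_\#)$, so $\tau_{\partial B_-} = \tau_{\partial B_+}$ in $\pi_0 \mathrm{Cont}(Y_\#, \xi_\#)$; combining with $\tau_{S_\#} = \tau_{\partial B_-}$ from the Sketch of Proof gives $\tau_{S_\#}^2 = \tau_{\partial B_+}^2$. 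An isotopy of $\tau_{\partial B_+}^2$ to the identity rel $B_+$ inside $M_\Gamma$, supplied by the first part, is supported in $M_\Gamma \setminus B_+ \subset Y_\#$ and extends by the identity on the $Y$-side to give the required contact isotopy in $(Y_\#, \xi_\#)$.

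The main technical obstacle is the identification of the Hopf-loop and reparametrisation classes in $\pi_1 \mathrm{CFr}(M_\Gamma, \xi_{\mathrm{st}})$: while both loops visibly lie in the fiber subgroup, checking that they represent the same generator requires carefully combining the simple connectivity of the universal cover $S^3$ with the fact that the $\Gamma$-action on $\mathrm{CFr}(S^3, \xi_{\mathrm{st}})$ restricts to the identity on each $\mathrm{U}(1)$-fiber.
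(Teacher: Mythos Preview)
Your overall strategy coincides with the paper's: use the central $\mathrm{U}(1)\subset\mathrm{U}(2)$ acting by contactomorphisms on $M_\Gamma$, compute its image in $\pi_1\mathrm{CFr}(M_\Gamma,\xi_{\mathrm{st}})$, and invoke Lemma~\ref{reparamdefn}. The deduction for the connected sum via Lemma~\ref{reparamLemma} is also fine.

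However, there is a computational error in your framing calculation which invalidates the stronger claim. You assert that under the trivialisation $\mathrm{CFr}(S^3,\xi_{\mathrm{st}})\simeq S^3\times\mathrm{U}(1)$ the Hopf loop is $\theta\mapsto(e^{i\theta}\tilde p,\,e^{i\theta})$. It is not: it is $\theta\mapsto(e^{i\theta}\tilde p,\,e^{2i\theta})$. Concretely, trivialise $\xi_{\mathrm{st}}$ by the quaternionic frame $q\mapsto jq$. The differential of $\Phi_\theta$ carries $jq$ to $e^{i\theta}jq$, while the reference frame at the new basepoint is $j(e^{i\theta}q)=e^{-i\theta}jq$ (using $ji=-ij$). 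Hence $d\Phi_\theta(jq)=e^{2i\theta}\cdot j(e^{i\theta}q)$, so the $\mathrm{U}(1)$-component winds \emph{twice}. This is precisely the computation the paper carries out, concluding that the Hopf loop maps to $(e,2)\in\Gamma\times\mathbb{Z}\cong\pi_1(M_\Gamma\times\mathrm{U}(1))$.

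Consequently your homotopy $(e^{is\theta}\tilde p,\,e^{i\theta})$ does not connect the Hopf loop to the reparametrisation loop; it connects the reparametrisation loop to something else. The Hopf loop is homotopic to \emph{twice} the reparametrisation loop, which by Lemma~\ref{reparamdefn} shows exactly that $\tau_{\partial B}^2$ is trivial rel~$B$---the statement of the lemma---but not that $\tau_{\partial B}$ itself is. Once you correct the factor of $2$, your argument becomes essentially identical to the paper's.
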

\begin{proof}
The center of $\mathrm{U}(2)$ is given by the subgroup $\cong \mathrm{U}(1)$ of diagonal matrices with diagonal $(\lambda , \lambda )$ for some $\lambda \in \mathrm{U}(1)$. This subgroup acts on $M_\Gamma$ by contactomorphisms and thus also on the space of Darboux balls, which is homotopy equivalent to $M_\Gamma \times \mathrm{U}(1)$ by \ref{embfr}. This gives a map $\pi_1 \mathrm{U}(1) = \mathbb{Z} \rightarrow \pi_1 (M_\Gamma \times \mathrm{U}(1) ) = \Gamma \times \mathbb{Z}$ which we assert is given by $1 \mapsto ( e ,  2 )$ where $e \in \Gamma$ is the identity element. From Lemma \ref{reparamdefn} and this assertion, the result would follow.

That the component $\mathbb{Z} \rightarrow \Gamma$ is trivial follows from $\mathrm{U}(1)$ being the center of $\mathrm{U}(2)$. To verify that $\mathbb{Z} \rightarrow \mathbb{Z}$ is multiplication by $2$ we need to calculate the change in contact framing under the action of $\mathrm{U}(1)$. We view $S^3$ as the unit sphere in the quaternions $\mathbb{H} = \mathbb{R}\langle 1, i , j , k \rangle$, so the tangent space at $q \in S^3$ is given by $T_q S^3 = \mathbb{R}\langle i q , jq , kq \rangle$ and the standard contact structure is $\xi_{\mathrm{st}}(q) = \mathbb{R}\langle jq , kq \rangle = \mathbb{C}\langle jq \rangle$. Thus, the frame $jq$ trivializes $\xi_{\mathrm{st}} \cong \mathbb{C}$ as a complex line bundle. The center subgroup $\mathrm{U}(1) \subset \mathrm{U}(2)$ acts on $S^3$ by $( \lambda , q  ) \mapsto \lambda q$, and the action of $\mathrm{U}(1)$ on the frame $jq$ is 
$$ \lambda \cdot jq =  j \overline{\lambda} q = \lambda^2 \cdot j (\lambda q )  
$$ 
and thus the action on $\xi_{\mathrm{st}} \cong \mathbb{C}$ is by multiplication by $\lambda^2$ on the fibres. This establishes our assertion, and hence the proof is complete.
\end{proof}

\begin{Remark}
When $\Gamma \subset \mathrm{SU}(2)$, an alternative proof of Lemma \ref{quotients} can be obtained by instead exhibiting a section of $ev_B : \mathcal{C}(M_\Gamma , \xi_{\mathrm{st}} ) \rightarrow S^2$. The point is that the radial vector field $x \partial_x + y \partial_y + z \partial_z + w \partial_w $ is a Liouville vector field for each of the symplectic forms $\omega_u$, $u \in S^2$, in the flat hyperkähler structure of $\mathbb{R}^4$. The induced $S^2$-family of contact structures $\xi_u$ on $S^3$ descends to the quotients $M_\Gamma$ (with $\Gamma \subset \mathrm{SU}(2)$) and provides a section of $ev_B$. 
\end{Remark}

\subsubsection{$S^1 \times S^2$}

Consider the unique tight contact structure on $S^1 \times S^2$, given by $\xi_0 = \mathrm{Ker}(zd\theta + \frac{1}{2} xdy - \frac{1}{2}ydx )$.
\begin{Lemma}\label{S1S2}
The squared Dehn twist $\tau_{\partial B}^2$ on the boundary of a Darboux ball $B \subset S^1 \times S^2$ is contact isotopic to the identity rel. $B$. Hence the squared Dehn twist $\tau_{S_\#}^2$ on the separating sphere $S_\#$ in any contact connected sum of the form $  (Y , \xi ) \# ( S^1 \times S^2 , \xi_{0} ) $ is contact isotopic to the identity.
\end{Lemma}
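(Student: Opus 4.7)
The plan mirrors the strategy of Lemma~\ref{quotients}: I will exhibit a $\mathrm{U}(1)$-subgroup of $\mathrm{Cont}(S^1 \times S^2, \xi_0)$ that, applied to a suitable Darboux ball, winds once around the reparametrisation factor of Lemma~\ref{reparamdefn}. The natural candidate is the rotation
\[
\rho_s(x,y,z,\theta) = (R_s(x,y), z, \theta),
\]
where $R_s$ is rotation by angle $s$ in the $xy$-plane. A direct check shows $\rho_s^\ast \alpha_0 = \alpha_0$, so $\rho_s$ is a contactomorphism of $(S^1 \times S^2, \xi_0)$, and its fixed point set contains the two pole circles $\{(0,0,\pm 1)\} \times S^1$.

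First I would take the Darboux ball $B$ centered at the fixed point $p_N := (0,0,1,0)$, with a contact embedding $\phi:(\mathbb{B}^3, \xi_{\mathrm{st}}) \hookrightarrow (S^1 \times S^2, \xi_0)$ such that $d\phi(0)$ sends $(\partial_x, \partial_y, \partial_z)$ to $(\partial_x, \partial_y, \partial_\theta)$; this is compatible with $\xi_0(p_N) = \mathrm{span}(\partial_x, \partial_y)$ and its co-orientation $+\partial_\theta$. Since $\rho_s$ fixes $p_N$, the loop $s \mapsto \rho_s \circ \phi$ lies in $\mathrm{Emb}((\mathbb{B}^3, \xi_{\mathrm{st}}), (S^1 \times S^2, \xi_0))$, and via the equivalence (\ref{embfr}) with $(S^1 \times S^2) \times \mathrm{U}(1)$ it corresponds to the loop that stays constant at $p_N$ while rotating the frame $(\partial_x, \partial_y)$ of $\xi_0(p_N) \cong \mathbb{C}$ by $e^{is}$. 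Hence its class in $\pi_1((S^1 \times S^2) \times \mathrm{U}(1)) = \mathbb{Z} \times \mathbb{Z}$ is $(0,1)$.

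By Lemma~\ref{reparamdefn}, the image of $(0,1)$ under the connecting map of the fibration (\ref{contball}) is exactly the Dehn twist $\tau_{\partial B}$. Since our loop lifts to $\rho_s \in \pi_1 \mathrm{Cont}(S^1 \times S^2, \xi_0)$, exactness of the long exact sequence forces $\tau_{\partial B}$ (and a fortiori $\tau_{\partial B}^2$) to vanish in $\pi_0 \mathrm{Cont}_0(S^1 \times S^2, \xi_0, B)$, establishing the first assertion. For the second assertion, I would use that $\tau_{\partial B_+}$, viewed as a contactomorphism of $(Y_+, \xi_+) = (S^1 \times S^2, \xi_0)$ supported outside $B_+$, extends by the identity across the neck of the connected sum to a contactomorphism of $(Y_\#, \xi_\#)$ which is contact isotopic to $\tau_{S_\#}$, because an exterior sphere parallel to $\partial B_+$ is isotopic to the separating sphere $S_\#$ through the neck. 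The rel.~$B_+$ contact isotopy provided by the first assertion then extends by the identity over $Y_- \setminus B_-$ to produce a global contact isotopy in $(Y_\#, \xi_\#)$, yielding $\tau_{S_\#}^2 \simeq \mathrm{id}$.

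The main issue to verify is the bookkeeping in the frame computation: one must make sure that the identification $\mathrm{Emb} \simeq (S^1 \times S^2) \times \mathrm{U}(1)$ of (\ref{embfr}) and the reparametrisation loop of Lemma~\ref{reparamdefn} are normalised compatibly with the action of $d\rho_s|_{p_N}$ on $\xi_0(p_N)$, so that the winding number computed is indeed $+1$. In contrast to the $S^3$ case of Lemma~\ref{quotients}, no doubling $\lambda \mapsto \lambda^2$ phenomenon occurs here, which means the argument in fact proves the stronger statement that $\tau_{\partial B}$ itself is contact isotopic to the identity rel.~$B$.
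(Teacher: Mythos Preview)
Your argument is correct and follows precisely the route sketched in the paper: use the $\mathrm{U}(1)$-action rotating the $S^2$ factor about the $z$-axis to show that the map $\pi_1 \mathrm{Cont}(S^1\times S^2,\xi_0)\to\pi_1\mathrm{Emb}((\mathbb{B}^3,\xi_{\mathrm{st}}),(S^1\times S^2,\xi_0))\to\pi_1\mathrm{U}(1)$ is surjective, then invoke Lemma~\ref{reparamdefn} and exactness of (\ref{contball}). Your frame computation at $p_N$ is accurate, and your observation that here the winding is $+1$ (rather than $+2$ as in Lemma~\ref{quotients}), so that already $\tau_{\partial B}$ is contact isotopic to the identity rel.~$B$, is a correct and worthwhile sharpening; the paper only states and needs the squared version.
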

\begin{proof} Let $R_\varphi$ be the counterclockwise rotation in the $xy$ plane of angle $\varphi$. By considering the subgroup $\{F_\varphi:\varphi \in S^1\}\simeq \mathrm{U}(1) \subset \mathrm{Cont}(S^1 \times S^2 , \xi_0 )$, given by $F_\varphi(\theta, x,y,z):=(\theta, R_\varphi(x,y),z)$, one easily checks that $\pi_1(\mathrm{Cont}(S^1\times S^2,\xi_0)\rightarrow \pi_1\mathrm{Emb}((\mathbb{B}^3,\xi_\mathrm{st}),(S^1\times S^2,\xi_0))\rightarrow \pi_1\mathrm{U}(1)$ is surjective, so the result follows. 
\end{proof}

\begin{Remark}
In turn, the contact Dehn twist on the non-trivial sphere in $( S^1 \times S^2, \xi_0 )$ is non-trivial (and with infinite order). However, it is formally non-trivial already and therefore not exotic, see \S \ref{Gompfsection}.
\end{Remark}

\subsubsection{Sum with an overtwisted contact $3$-manifold}

Let $(r,\theta,z)\in\mathbb{R}^3$ be cylindrical coordinates. Consider the contact structure $\xi_{\mathrm{ot}}$ in $\mathbb{R}^3$ defined by the kernel of 
$$ \alpha_{\mathrm{ot}}= \cos r dz+r\sin r d\theta. $$

The disk $\Delta_{\mathrm{ot}}=\{(r,\theta,z)\in\mathbb{R}^3:z=0, r\leq \pi\}$ is an \em  overtwisted disk.\em 

\begin{Definition}[Eliashberg \cite{EliashbergOT}]
An overtwisted contact $3$-manifold is a contact $3$-manifold that contains an embedded overtwisted disk. 
\end{Definition}

Let $\mathcal{C}(Y, \Delta_{\mathrm{ot}})$ be the space of contact structures in $Y$ with a fixed overtwisted disk $\Delta_{\mathrm{ot}}\subset Y$. Let $\Xi (Y , \Delta_{\mathrm{ot}} )$ be the space of co-oriented plane fields in $Y$ tangent to $\Delta_{\mathrm{ot}}$ at the point $0 \in \Delta_{\mathrm{ot}}$. A foundational result of Eliashberg, generalised in higher dimensions by Borman, Eliashberg and Murphy, is 

\begin{Theorem}[Eliashberg \cite{EliashbergOT,BEM} ]\label{thm:HPrincipleOT}
The inclusion $$\mathcal{C}(Y , \Delta_{\mathrm{ot}})\rightarrow \Xi (Y , \Delta_{\mathrm{ot}} )$$ is a homotopy equivalence. 
\end{Theorem}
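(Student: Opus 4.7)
The plan is to derive this as a parametric-relative form of Eliashberg's h-principle for overtwisted $3$-manifolds, reducing to the case where the contact germ near $\Delta_{\mathrm{ot}}$ is fixed to the standard overtwisted model. First, I would fix a tubular neighborhood $N$ of $\Delta_{\mathrm{ot}}$ in $\mathbb{R}^3$ equipped with the standard overtwisted contact germ $\xi_N = \mathrm{Ker}(\cos r \, dz + r \sin r \, d\theta)$, and identify $N$ with a neighborhood of $\Delta_{\mathrm{ot}}$ in $Y$. Let $\mathcal{C}^N(Y) \subset \mathcal{C}(Y, \Delta_{\mathrm{ot}})$ and $\Xi^N(Y) \subset \Xi(Y, \Delta_{\mathrm{ot}})$ denote the subspaces of contact structures, resp. plane fields, whose restriction to $N$ equals $\xi_N$.

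The cornerstone of the argument is the parametric relative form of Eliashberg's h-principle, which asserts that the inclusion $\mathcal{C}^N(Y) \hookrightarrow \Xi^N(Y)$ is a weak homotopy equivalence; this was proved in dimension $3$ by Eliashberg \cite{EliashbergOT} and in all dimensions by Borman--Eliashberg--Murphy \cite{BEM}, and I would use it as a black box. To transfer this to the statement of the theorem, I would realize both $\mathcal{C}(Y, \Delta_{\mathrm{ot}})$ and $\Xi(Y, \Delta_{\mathrm{ot}})$ as total spaces of Serre fibrations with fibers $\mathcal{C}^N(Y)$ and $\Xi^N(Y)$ respectively. Concretely, restriction to $N$ (resp. to the tangent plane at $0$) gives maps $\mathcal{C}(Y, \Delta_{\mathrm{ot}}) \to \mathcal{G}_{\mathcal{C}}$ and $\Xi(Y, \Delta_{\mathrm{ot}}) \to \mathcal{G}_{\Xi}$, where $\mathcal{G}_{\mathcal{C}}$ is the space of germs near $\Delta_{\mathrm{ot}}$ of contact structures having $\Delta_{\mathrm{ot}}$ as an overtwisted disk, and $\mathcal{G}_{\Xi}$ is the space of germs at $0$ of plane fields agreeing with $T_0 \Delta_{\mathrm{ot}}$. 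These are Serre fibrations by Gray stability and isotopy extension on the contact side, and a cutoff argument on the plane field side.

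The final step is to show both germ spaces are weakly contractible. Contractibility of $\mathcal{G}_{\Xi}$ is elementary, via linear interpolation to $\xi_N$. Contractibility of $\mathcal{G}_{\mathcal{C}}$ is the technical heart of the argument: any germ of contact structure having $\Delta_{\mathrm{ot}}$ as an overtwisted disk is isotopic to $\xi_N$ by a parametric Moser--Gray argument combined with the local uniqueness of the overtwisted disk model, and this isotopy can be upgraded to a contraction of the germ space. Combining these, $\mathcal{C}(Y, \Delta_{\mathrm{ot}}) \simeq \mathcal{C}^N(Y)$ and $\Xi(Y, \Delta_{\mathrm{ot}}) \simeq \Xi^N(Y)$ compatibly with the inclusions, so the theorem follows from the h-principle. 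I expect the main obstacle to be executing the parametric Moser argument for $\mathcal{G}_{\mathcal{C}}$, since the characteristic foliations of a general member of $\mathcal{G}_{\mathcal{C}}$ need not match the standard one a priori.
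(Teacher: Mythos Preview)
The paper does not give a proof of this statement. Theorem~\ref{thm:HPrincipleOT} is stated with attribution to Eliashberg \cite{EliashbergOT} (and to Borman--Eliashberg--Murphy \cite{BEM} in higher dimensions) and is used throughout as a black box; no argument is supplied.

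Your proposal is therefore not comparable to anything in the paper, but a brief comment on its content: the reduction you outline is sound in spirit, yet it is essentially circular. The ``black box'' you invoke---the parametric relative $h$-principle asserting that $\mathcal{C}^N(Y)\hookrightarrow\Xi^N(Y)$ is a weak equivalence---\emph{is} Eliashberg's theorem in its standard formulation (contact structures fixed on a neighbourhood of the overtwisted disk, versus plane fields fixed there). What you are adding is the passage from ``fixed on a neighbourhood $N$'' to the paper's slightly asymmetric formulation (overtwisted disk fixed on the contact side, only the tangent plane at $0$ fixed on the formal side). That passage is indeed handled by the fibration/contractible-germ argument you sketch, and the Gray-stability step for $\mathcal{G}_{\mathcal{C}}$ is routine once one uses that the characteristic foliation of an overtwisted disk is prescribed by definition. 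So your write-up is a correct repackaging, but the substantive content remains the cited theorem itself.
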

\begin{Remark}\label{rmk:RelativeH-Principle}
A relative version Eliashberg's h-principle is available. Suppose $A\subseteq Y\setminus \Delta_{\mathrm{ot}}$ is compact and $Y\backslash A$ is connected. Given a family of co-oriented plane fields $\xi^k\in\Xi(Y , \Delta_{\mathrm{ot}})$ that is contact over an open neighbourhood of $A$ there exists a homotopy rel. $A$ from $\xi^k$ to a family of contact structures.
\end{Remark}

Using Eliashberg's $h$-principle we obtain

\begin{Lemma}\label{lem:OT}
Let $(Y,\xi)$ be a contact $3$-manifold with vanishing Euler class. Then, for every overtwisted contact $3$-manifold $(M,\xi_{\mathrm{ot}})$ the squared contact Dehn twist $\tau_{S_\#}^2$ in $(Y,\xi)\# (M,\xi_{\mathrm{ot}})$ is contact isotopic to the identity. 
\end{Lemma}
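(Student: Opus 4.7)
My plan is to combine the formal triviality of $\tau_{\partial B_-}^2$ (coming from the vanishing of $e(\xi)$) with an $h$-principle for contactomorphisms derived from Eliashberg's theorem. First, recall from the introduction's sketch of Theorem~\ref{mainthm}(A) that $\tau_{S_\#}^2$ agrees with the image of $\tau_{\partial B_-}^2$ (the squared contact Dehn twist on an exterior sphere parallel to $\partial B_-$ in $Y$, extended by the identity to $Y_\#$) under the map $\pi_0\mathrm{Cont}(Y,\xi,B_-)\to\pi_0\mathrm{Cont}(Y_\#,\xi_\#)$. It therefore suffices to prove that $\tau_{\partial B_-}^2$ is contact isotopic to the identity in $(Y_\#,\xi_\#)$. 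I fix an overtwisted disk $\Delta_{\mathrm{ot}}\subset M\subset Y_\#$ disjoint from the $Y$-side of the neck, so that $\tau_{\partial B_-}^2$ acts as the identity on a neighbourhood of $\Delta_{\mathrm{ot}}$.

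Since $e(\xi)=0$, Lemma~\ref{ftrivialtwist} supplies a formal contact isotopy rel.~$B_-$ from the identity to $\tau_{\partial B_-}^2$ in $\mathrm{FCont}_0(Y,\xi,B_-)$. Because this isotopy is the identity on $B_-$ (together with its derivatives), it extends by the identity on the complementary region $Y_\#\setminus(Y\setminus B_-)$ of $Y_\#$, which contains $\Delta_{\mathrm{ot}}$. This yields a formal contact isotopy from the identity to $\tau_{\partial B_-}^2$ in $\mathrm{FCont}_0(Y_\#,\xi_\#,\Delta_{\mathrm{ot}})$; that is, $\tau_{\partial B_-}^2$ is formally trivial rel.~$\Delta_{\mathrm{ot}}$.

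It remains to upgrade this formal triviality to an actual contact isotopy, which is the main step. I plan to compare the two fibrations from Lemma~\ref{moserflemma}, taken rel.~$\Delta_{\mathrm{ot}}$:
\begin{align*}
\mathrm{Cont}_0(Y_\#,\xi_\#,\Delta_{\mathrm{ot}})&\to\mathrm{Diff}_0(Y_\#,\Delta_{\mathrm{ot}})\to\mathcal{C}(Y_\#,\xi_\#,\Delta_{\mathrm{ot}}),\\
\mathrm{FCont}_0(Y_\#,\xi_\#,\Delta_{\mathrm{ot}})&\to\mathrm{FDiff}_0(Y_\#,\Delta_{\mathrm{ot}})\to\Xi(Y_\#,\xi_\#,\Delta_{\mathrm{ot}}).
\end{align*}
The middle vertical is a homotopy equivalence essentially by definition of formal diffeomorphisms, while the right vertical is a weak equivalence by Eliashberg's $h$-principle (Theorem~\ref{thm:HPrincipleOT}) in its relative formulation (Remark~\ref{rmk:RelativeH-Principle}), using the overtwisted disk $\Delta_{\mathrm{ot}}\subset Y_\#$. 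A five-lemma argument applied to the associated long exact sequences gives that the left vertical is also a weak equivalence, so formal and actual contactomorphism classes rel.~$\Delta_{\mathrm{ot}}$ coincide. Consequently, the formal triviality of $\tau_{\partial B_-}^2$ upgrades to a genuine contact isotopy to the identity in $(Y_\#,\xi_\#)$, completing the proof. The hardest point to verify rigorously is precisely this $h$-principle-based upgrade: both in the derivation of the relative equivalence for $\mathcal{C}\to\Xi$ from Eliashberg's theorem, and in checking that the formal isotopy truly extends to an object fixing a neighbourhood of $\Delta_{\mathrm{ot}}$.
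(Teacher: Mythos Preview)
Your proposal is correct and follows essentially the same approach as the paper: both use Lemma~\ref{ftrivialtwist} to obtain formal triviality of $\tau_{\partial B}^2$ rel.~$B$, extend this by the identity to a formal isotopy rel.~an overtwisted disk in $M$, and then invoke Eliashberg's $h$-principle together with Lemma~\ref{moserflemma} to upgrade formal to genuine contact isotopy. The only minor difference is that the paper phrases the final step as working on the contact manifold with convex boundary $Y\#(M\setminus B_{\mathrm{ot}})$ rather than rel.~$\Delta_{\mathrm{ot}}$ on the closed manifold, but this is equivalent.
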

\begin{proof}
Let $B\subset (Y,\xi)$ be a Darboux ball that we remove when performing the connected sum. By Lemma \ref{ftrivialtwist} we have that $\tau_{\partial B}^{2}$ is formally contact isotopic to the identity rel. $B$. It follows that $\tau_{S_\#}^{2}$ is formally contact isotopic to the identity on $Y \# M$, in fact relative to a small ball $B_{\mathrm{ot}}$ containing an overtwisted disk $\Delta_{\mathrm{ot}}\subset M$. At this point, by Eliashberg's Theorem \ref{thm:HPrincipleOT} and Lemma \ref{moserflemma} applied to the contact $3$-manifold with convex boundary $(Y\# (M\setminus B_{\mathrm{ot}}) ,\xi\#\xi_{\mathrm{ot}})$ we see that the group of contactomorphisms fixing $\Delta_{\mathrm{ot}}$ is homotopy equivalent to the corresponding space of formal contactomorphisms. The result now follows.
\end{proof}

In \S\ref{OTsection} we will see that Lemma \ref{lem:OT} implies exotic $1$-parametric phenomena in overtwisted contact $3$-manifolds.

\subsection{The Reidemeister I Move and Gompf's Contactomorphism}\label{Gompfsection}

We now describe the contact Dehn twist diagrammatically by means of front projections of Legendrian arcs. This approach is in the spirit of Gompf's description \cite{gompf} of the contact Dehn twist. For convenience we consider the unit ball $(\mathbb{B}^3,\xi=\ker (dz-ydx))$. Let $Y_0 = [-1,1]\times S^2$ be the complement in $\mathbb{B}^3$ of a small open ball $B_{\varepsilon }$ around the origin.
Consider the standard Legendrian arc $l:[-1,1]\rightarrow \mathbb{B}^3 \, , \, t\mapsto(t,0,0).$ Perform two Reidemeister I moves to the Legendrian $l$ to obtain a second Legendrian arc $\hat{l}$. We may assume that $\hat{l}$ coincides with $l$ over the $B_\varepsilon$. The front of these arcs are depicted in Figure \ref{fig:ReidemeisterI}. 

\begin{figure}[h!]
\centering
 \includegraphics[scale=0.8]{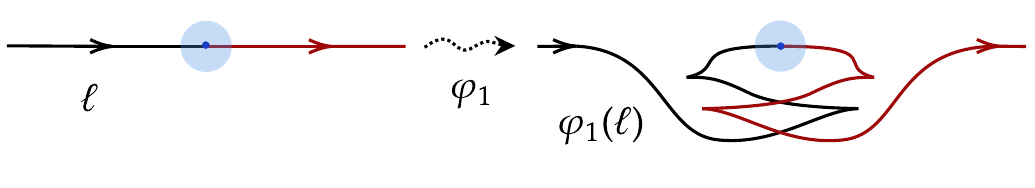}
 \caption{Front projection of $l$ and $\hat{l}$. The blue ball represents the small ball $B_\varepsilon \subset \mathbb{B}^3$.}
 \label{fig:ReidemeisterI}
\end{figure}

These arcs are Legendrian isotopic, so there exists a contact isotopy $\varphi_t\in\mathrm{Cont}(\mathbb{B}^3,\xi)$ with $\varphi_0 = \mathrm{id}$ and $\varphi_1 \circ l=\hat{l}$. Moreover, $\varphi_1$ can be taken to be the identity over $B_\varepsilon$. Therefore, $\varphi_1$ gives a contactomorphism $\tau$ of the contact manifold with convex boundary $(Y_0,\xi)$. From now on, we will denote the restrictions of $l$ and $\hat{l}$ to the red segments in Figure \ref{fig:ReidemeisterI} by the same letters for convenience. We have $\tau (l)= \hat{l}$ and the arc $\hat{l}$ is obtained in $(Y_0,\xi)$ from $l$ by a positive stabilization, see Figure \ref{fig:ContactDehnTwist}. In particular, $$\mathrm{rot}(\tau(l))=\mathrm{rot}(l)+1.$$

 \begin{figure}[h!]
 \centering
 \includegraphics[scale=0.8]{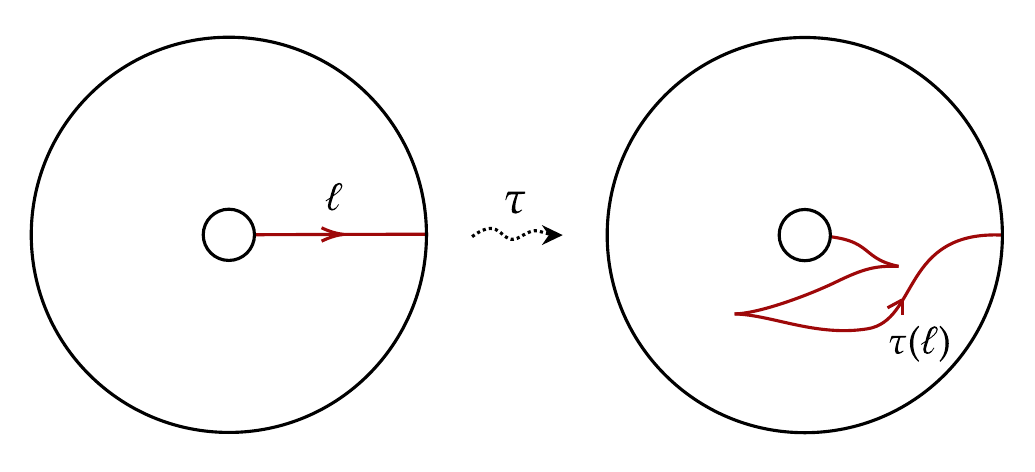}
 \caption{The image of $l$ under $\tau$.}
 \label{fig:ContactDehnTwist}
 \end{figure}
 
It follows that $\tau$ is not (formally) contact isotopic to the identity as a contactomorphism of $(Y_0,\xi)$ rel. $\partial Y_0$. This contactomorphism is contact isotopic to the contact Dehn twist as we have defined it in this section. In fact, as we will see in Lemma \ref{lem:DehnR1Move}, since the complement of $l$ is a tight $3$-ball any contactomorphism of $(Y_0,\xi)$ can be described, up to contact isotopy, just in terms of its effect on $l$ and, therefore, just by means of front projections of Legendrian arcs. First, we observe that the path-connected components of the space $\mathrm{Leg}(Y_0,\xi)$ of Legendrian embeddings of arcs that coincide with $l$ at the end points can be easily understood:
 
\begin{Lemma}
The map $\mathrm{rot}:\pi_0\mathrm{Leg}(Y_0,\xi)\rightarrow \mathbb{Z}\,, \, L\mapsto \mathrm{rot}(L),$ is an isomorphism. 
\end{Lemma}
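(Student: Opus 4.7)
The plan is to prove surjectivity and injectivity of $\mathrm{rot}$ separately.

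For surjectivity, I would appeal directly to the Reidemeister I construction described just above the lemma. Each positive (respectively negative) Reidemeister I stabilization of $l$, performed inside a small Darboux sub-ball of $Y_0$ so as not to disturb the boundary conditions, produces a new Legendrian arc in $\mathrm{Leg}(Y_0, \xi)$ whose rotation number equals $\mathrm{rot}(l)+1$ (respectively $\mathrm{rot}(l)-1$). Iterating $|n|$ such moves of the appropriate sign then exhibits an element of $\mathrm{Leg}(Y_0, \xi)$ with rotation number $n$ for every $n \in \mathbb{Z}$.

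For injectivity, let $L_0, L_1 \in \mathrm{Leg}(Y_0, \xi)$ share the same rotation number. The strategy is to reduce the problem to the Eliashberg--Fraser classification of Legendrian unknots in $(S^3, \xi_{\mathrm{st}})$. First, I would glue the ball $B_\varepsilon$ back in with its standard contact structure, recovering the ambient $(\mathbb{B}^3, \xi_{\mathrm{st}})$, and embed the latter into $(S^3, \xi_{\mathrm{st}})$. Next, I would close off each arc $L_i$ to a Legendrian unknot $\widehat{L}_i \subset (S^3, \xi_{\mathrm{st}})$ by attaching a fixed Legendrian completion arc joining the endpoints of $l$ in the exterior. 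The rotation numbers of $\widehat{L}_0$ and $\widehat{L}_1$ coincide, since they differ from those of $L_0$ and $L_1$ by the same fixed contribution coming from the capping arc; the Thurston--Bennequin numbers should also coincide because the unknottedness hypothesis on each $L_i$ forces the corresponding closure to saturate the Bennequin equality $\mathrm{tb}(\widehat{L}_i) = -1 - |\mathrm{rot}(\widehat{L}_i)|$. Eliashberg--Fraser then supplies a Legendrian isotopy between $\widehat{L}_0$ and $\widehat{L}_1$ in $(S^3, \xi_{\mathrm{st}})$, and a standard parametric Legendrian isotopy extension argument lets us arrange this isotopy to be supported away from the capping region; restricting back to $Y_0$ delivers the required Legendrian isotopy from $L_0$ to $L_1$.

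The main obstacle will be the Thurston--Bennequin equality in the injectivity step: namely, verifying that the unknottedness hypothesis built into the definition of $\mathrm{Leg}(Y_0, \xi)$, together with the fixed boundary data at the endpoints of $l$, pins down the Thurston--Bennequin number of the Legendrian closure purely in terms of its rotation number. This essentially requires that an "unknotted" arc in the sense of $\mathrm{Leg}(Y_0, \xi)$ always yields a maximally unknotted Legendrian closure in the ambient contact $3$-sphere. Once that geometric input is secured, the surrounding argument becomes a routine application of the Eliashberg--Fraser classification and relative isotopy extension.
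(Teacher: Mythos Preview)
Your injectivity argument has a genuine gap, precisely at the point you flagged as ``the main obstacle''. The claim that the closures $\widehat{L}_0, \widehat{L}_1$ must saturate the Bennequin inequality is false: smooth unknottedness of a Legendrian does not force maximal $\mathrm{tb}$. Concretely, take $L_0 = l$ and let $L_1$ be its double stabilization (one positive and one negative zigzag) performed inside $Y_0$. Both are smoothly unknotted arcs with the same rotation number, yet after closing up with a fixed capping arc their Thurston--Bennequin invariants differ by $2$. So Eliashberg--Fraser gives no Legendrian isotopy between $\widehat{L}_0$ and $\widehat{L}_1$ in $S^3$, and the argument collapses.

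The paper's proof embraces rather than avoids this $\mathrm{tb}$ discrepancy. After concatenating with an auxiliary arc, the two resulting long unknots in $(\mathbb{B}^3,\xi)$ share $\mathrm{rot}$ but possibly not $\mathrm{tb}$; Eliashberg--Fraser then says they differ by a finite sequence of \emph{double} stabilizations. The crucial geometric input (Figure~\ref{fig:KillingDoubleStabilization}) is that a double stabilization of an arc can be Legendrian-isotoped away \emph{inside $Y_0$}, essentially by sliding the pair of cusps around the inner boundary sphere $\partial B_\varepsilon$. This move is exactly what is unavailable in $\mathbb{B}^3$ or $S^3$ (where $\mathrm{tb}$ is an invariant) but becomes available in the spherical shell $Y_0$, and it is the real content of the lemma that your approach misses.
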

\begin{proof} Two smoothly isotopic Legendrian arcs with the same rotation number are Legendrian isotopic after adding a finite number of double stabilizations (pairs of positive and negative stabilizations) because of the Fuchs-Tabachnikov Theorem \cite{FuchsTabachnikov}. As depicted in Figure \ref{fig:KillingDoubleStabilization}, this can done by a Legendrian isotopy in $(Y_0,\xi)$.  Therefore, the proof follows from the $3$-dimensional lightbulb Theorem.
\end{proof}

 \begin{figure}[h!]
 \centering
 \includegraphics[scale=0.8]{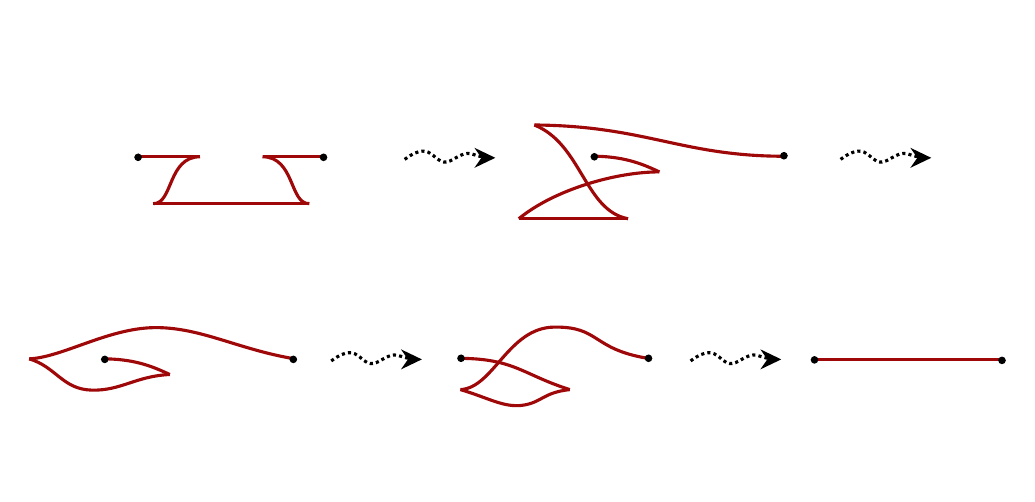} 
 \caption{Legendrian isotopy from a double stabilization of $l$ to $l$ in $(Y_0,\xi)$.}
 \label{fig:KillingDoubleStabilization}
 \end{figure}
 
We conclude the following
 
\begin{Lemma}\label{lem:DehnR1Move}
The map $\mathrm{Cont}(Y_0,\xi)\rightarrow \mathrm{Leg}(Y_0,\xi),f \mapsto f\circ l$ is a homotopy equivalence. In particular, 
$$ \pi_0 \mathrm{Cont}(Y_0,\xi)\rightarrow \mathbb{Z} \quad, \quad  f\mapsto \mathrm{rot}(f\circ l)$$ 
is an isomorphism. Moreover, the contact Dehn twist is characterized, up to contact isotopy, by the relation $$\mathrm{rot}(f(l))=\mathrm{rot}(l)+1.$$
\end{Lemma}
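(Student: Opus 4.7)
The plan is to realize the evaluation $f \mapsto f \circ l$ as a Serre fibration via parametric contact isotopy extension, and to reduce the homotopy equivalence to (i) the computation $\pi_0\mathrm{Cont}(Y_0,\xi)\cong \mathbb{Z}$ from Lemma \ref{lem:ContSphericalAnnulus}, (ii) the rotation-number computation $\pi_0\mathrm{Leg}(Y_0,\xi)\cong\mathbb{Z}$ from the previous lemma, and (iii) a contractibility statement for the fiber. Concretely, the parametric contact isotopy extension theorem applied to Legendrian embeddings of arcs yields a Serre fibration
\[
\mathrm{Cont}(Y_0,\xi,l)\longrightarrow \mathrm{Cont}(Y_0,\xi)\longrightarrow \mathrm{Leg}(Y_0,\xi)
\]
onto the union of components in the $\mathrm{Cont}(Y_0,\xi)$-orbit of $l$, with fiber the subgroup of contactomorphisms fixing $l$ pointwise.

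For the induced map on $\pi_0$, the source is $\mathbb{Z}$ generated by the contact Dehn twist $\tau$ (Lemma \ref{lem:ContSphericalAnnulus}), the target is $\mathbb{Z}$ via rotation (previous lemma), and the Reidemeister I calculation of this subsection gives $\mathrm{rot}(\tau(l)) = \mathrm{rot}(l)+1$. Hence the induced map is the identity on $\mathbb{Z}$, which simultaneously shows surjectivity onto all components of $\mathrm{Leg}(Y_0,\xi)$ and the $\pi_0$ isomorphism.

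It remains to show the fiber $\mathrm{Cont}(Y_0,\xi,l)$ is weakly contractible. Fixing a standard contact neighborhood $N$ of (each component of) $l$, a parametric version of the Legendrian standard neighborhood theorem identifies $\mathrm{Cont}(Y_0,\xi,l) \simeq \mathrm{Cont}(Y_0,\xi,N)$, the subgroup of contactomorphisms fixing $N$ pointwise. Contractibility of the latter is reduced to analyzing the contact manifold with convex boundary $Y_0\setminus\mathring{N}$ relative to its boundary, via the Eliashberg--Mishachev $h$-principle, possibly combined with convex surface theory to break the complement into standard pieces. Once this is established the lemma follows from the fibration's long exact sequence, and the characterization of $\tau$ by $\mathrm{rot}(f(l)) = \mathrm{rot}(l)+1$ is immediate from the $\pi_0$ isomorphism.

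The main obstacle is the last step. If the restriction of $l$ to $Y_0$ is a disjoint union of two Legendrian arcs (as suggested by the red-segment convention following Figure \ref{fig:ReidemeisterI}), then $Y_0\setminus\mathring{N}$ is a solid torus rather than a $3$-ball, so one must decompose it further---for instance, along a convex meridional disk---to reduce to standard Darboux balls before the Eliashberg--Mishachev $h$-principle applies. Verifying that this decomposition can be carried out parametrically, and that the resulting fiber is contractible, is the delicate technical point of the argument.
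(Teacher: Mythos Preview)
Your approach coincides with the paper's: realize $f\mapsto f\circ l$ as a Serre fibration via contact isotopy extension, identify $\pi_0$ of source and target with $\mathbb{Z}$ (via Lemma~\ref{lem:ContSphericalAnnulus} and the previous lemma), use the Reidemeister~I calculation to see that the induced map is the identity on $\mathbb{Z}$, and reduce the homotopy equivalence to contractibility of the fiber $\mathrm{Cont}(Y_0,\xi,l)$, i.e.\ of the contactomorphism group of the complement of a neighbourhood of $l$.

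The paper dispatches this last step in one line by asserting that the complement is a tight $3$-ball, whence contractibility follows immediately from Eliashberg--Mishachev together with Hatcher's theorem. Under the two-arc reading of $l$ (the ``red segments'' convention), the complement is in fact a tight solid torus, exactly as you suspected, so your caution is well placed and the paper is slightly imprecise here. The fix you propose---cut along a convex meridional disk to reduce to a tight $3$-ball---works and can be made parametric by the same microfibration/convex-surface techniques developed in \S\ref{spheressection}. Your outline is correct and, on this particular point, more careful than the paper's own argument.
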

\begin{proof}
This follows by the previous Lemma, the Eliashberg--Mishachev Theorem \ref{thm:EM} and Hatcher's Theorem \cite{hatchersmale}, since the fiber of $\mathrm{Cont}(Y_0,\xi)\rightarrow \mathrm{Leg}(Y_0,\xi)$ can be identified with the contactomorphism group of the complement of a neighbourhood of $l$, and the latter is a tight $3$-ball.
\end{proof}

\section{Monopole Floer homology and families of contact structures} \label{monopolesection}

In this section we provide the necessary background on the Floer theoretic ingredients that come into the proof of Theorem \ref{mainthm}. For the remainder of this article, all homology groups are taken with $\mathbb{Q}$ coefficients for simplicity.

\subsection{Monopole Floer homology and the contact invariant}

For a quick introduction to Kronheimer and Mrowka's monopole Floer homology groups we recommend \cite{lin,monolens} and for a detailed treatment the monograph \cite{KM}. Here we just comment briefly on a few formal aspects.

Consider a $3$-manifold $Y$ together with spin-c structure $\mathfrak{s}$ (in this article the only spin-c structure that will be relevant is that induced by a contact structure $\xi$, denoted $\mathfrak{s}_\xi$). Associated to it there are various monopole Floer homology groups ($\mathbb{Q}$-vector spaces in this article). The ones relevant to us are the "to" and "tilde" flavors: $\HMto (Y, \mathfrak{s} )$ and $\HMtilde (Y, \mathfrak{s} )$. The former arises "formally" as the $S^1$-equivariant Morse homology of the Chern-Simons-Dirac functional. An algebraic manifestation of this equivariant nature is that $\HMto (Y, \mathfrak{s} )$ carries a module structure over the polynomial algebra $\mathbb{Q}[U] $ (i.e. the $S^1$-equivariant cohomology of a point, $\mathrm{H}_{S^1}^\bullet (\mathrm{point} ) = \mathbb{Q}[U]$) and $U$ decreases grading by two. In turn, the "tilde" flavor should be regarded as the (non-equivariant) Morse homology, and thus is an $\mathrm{H}_\bullet (S^1 ) = \mathbb{Q}[\chi]/(\chi^2 ) $-module, with $\chi$ raising degree by one. A standard \textit{Gysin sequence} relates the two groups: 
\[
\begin{tikzcd}
\cdots \arrow{r}{p} & \HMto_{\bullet}(Y , \mathfrak{s}) \arrow{r}{U} & \HMto_{\bullet-2} (-Y, \mathfrak{s}) \arrow{r}{j} & \HMtilde_{\bullet-1}(-Y , \mathfrak{s} ) \arrow{r}{p} & \cdots 
\end{tikzcd}
\]
and the map $\chi$ is recovered from this by $\chi = j p$. A common feature of all flavors of the monopole groups is a canonical grading by the set of homotopy classes of plane fields $\pi_0 \Xi (Y )$, which carries a natural $\mathbb{Z}$-action.
 
The \textit{contact invariant} $\mathbf{c}(\xi)$ is an element of $\HMto_{[\xi]} (-Y , \mathfrak{s}_\xi )$ which is well-defined up to a sign, and is canonically attached to a contact structure $\xi$ on $Y$. It was defined by Kronheimer, Mrowka, Ozsváth and Szabó in \cite{monolens}, but its definition goes back essentially to the earlier paper \cite{monocont}. Ozsváth and Szabó gave a definition of $\mathbf{c}(\xi )$ in Heegaard-Floer homology \cite{OScontact}. Under the isomorphism between the monopole and Heegaard-Floer groups \cite{KLT,CGH} the contact invariants are shown to agree. Some of the basic properties of $\mathbf{c}(\xi )$ are:
\begin{itemize}
    \item $\mathbf{c}(\xi ) = 0$ if $(Y , \xi )$ is overtwisted \cite{MR}
    \item $\mathbf{c}(\xi ) \neq 0$ if $(Y ,\xi )$ admits a strong symplectic filling \cite{mariano} 
    \item $\mathbf{c} (\xi )$ is natural under symplectic cobordisms \cite{mariano}: if $(W, \omega )$ is a symplectic cobordism $(Y_1 , \xi_1 ) \leadsto (Y_2 , \xi_2 )$ (here the convex end is $(Y_2, \xi_2)$) then 
    $$\HMto (-W , \mathfrak{s}_\omega ) \cto ( \xi_2 ) = \cto (\xi_1 )$$
    \item $U \cdot \mathbf{c}(\xi ) = 0$ (this is clear from the Heegaard-Floer point of view; in the monopole case this follows from Theorem \ref{commdiag} below).
\end{itemize}

\subsection{Families contact invariant}\label{familiessubsec}

\begin{Remark} Throughout this section we assume that $\mathbf{c}(\xi )\neq 0$ because it simplifies a little the exposition that follows (otherwise one should consider homologies with twisted coefficients, see \cite{yo}). We also resolve the sign ambiguity of $\mathbf{c}(\xi )$ by fixing one of the two. All homologies are taken with $\mathbb{Q}$ coefficients.
\end{Remark}

A version of the contact invariant for a family of contact structures was introduced by the second author in \cite{yo}. We summarize now some of those results. We have homomorphisms 
\begin{align}
    & \fc_\bullet : \mathrm{H}_\bullet ( \mathcal{C}(Y, \xi ) ) \longrightarrow \HMto_{[\xi]+\bullet}(-Y , \mathfrak{s}_{\xi}) \label{fc} \\
    & \widetilde{\fc}_\bullet : \mathrm{H}_\bullet ( \mathcal{C}(Y, \xi , B ) ) \longrightarrow \HMtilde_{[\xi]+\bullet}(-Y , \mathfrak{s}_{\xi} ). \label{fctilde}
\end{align}

The invariant $\fc_\bullet$ recovers the usual contact invariant: we have $\mathrm{H}_0 ( \mathcal{C}(Y , \xi ) ) = \mathbb{Q}$ and then $\fc_0 (1) = \mathbf{c}(\xi )$. 
The main property about them that we will exploit is the following. Associated to the fibration $ev_B : \mathcal{C}(Y , \xi ) \rightarrow S^2$ there is the Serre spectral sequence in homology. The latter collapses on the $E^3$ page and assembles into the \textit{Wang} long exact sequence:
\[
\begin{tikzcd}
\cdots \arrow{r} & \mathrm{H}_\bullet (\mathcal{C}(Y , \xi ) ) \arrow{r}{U_B}  & \mathrm{H}_{ \bullet-2} ( \mathcal{C}(Y , \xi , B ) ) \arrow{r} & \mathrm{H}_{\bullet-1} ( \mathcal{C}(Y,\xi , B ) ) \arrow{r}{\iota_\ast} & \cdots
\end{tikzcd}
\]
where $U_B$ takes the intersection of cycles in the total space of the fibration with the fiber, and $\iota_\ast$ is the map induced by inclusion $\iota : \mathcal{C}(Y, \xi, B )\rightarrow \mathcal{C}(Y , \xi )$. We note that the obstruction class $\mathcal{O}_\xi$ for $ev_B$ to admit a homotopy section arises here homologically as the image of $1$ under $\mathbb{Q} = H_0 ( \mathcal{C}(Y, \xi, B ) ) \rightarrow H_1 ( \mathcal{C}(Y, \xi , B ) )$. 

\begin{Theorem}[\cite{yo}]\label{commdiag} There is a commutative diagram (up to signs)

\begin{tikzpicture}[baseline= (a).base]
\node[scale=1, trim left=-6cm] (a) at (0,0){
\begin{tikzcd}
\cdots \arrow{r}{p} & \HMto_{[\xi] + \bullet }(-Y , \mathfrak{s}_\xi) \arrow{r}{U} & \HMto_{[\xi] + \bullet-2} (-Y ,\mathfrak{s}_\xi ) \arrow{r}{j} & \HMtilde_{[\xi]+ \bullet-1}(-Y, \mathfrak{s}_\xi ) \arrow{r}{p} & \cdots \\
\cdots \arrow{r} & \mathrm{H}_\bullet (\mathcal{C}(Y , \xi ) ) \arrow{r}{U_B} \arrow{u}{\fc_\bullet } & \mathrm{H}_{ \bullet-2} ( \mathcal{C}(Y , \xi , B ) ) \arrow{r} \arrow{u}{(\fc_{\bullet -2}) \circ \iota_\ast }& \mathrm{H}_{\bullet-1} ( \mathcal{C}(Y,\xi , B ) ) \arrow{u}{\fctilde_{\bullet-1}}\arrow{r}{\iota_\ast} & \cdots
\end{tikzcd}
};
\end{tikzpicture}
\end{Theorem}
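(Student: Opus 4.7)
\medskip

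\noindent\textbf{Proof plan.} The strategy is to verify the commutativity of each of the three squares in the diagram using the moduli-theoretic definition of $\fc_\bullet$ and $\fctilde_\bullet$ from \cite{yo}, which I would set up first. A cycle $\sigma \colon P \to \mathcal{C}(Y, \xi)$ represented by a smooth $P$-family $\{\xi_p\}$ of contact structures yields a $P$-parametrized family of Seiberg--Witten moduli spaces on a suitable cylindrical-end capping of $-Y$, and $\fc_\bullet(\sigma) \in \HMto_{[\xi]+\bullet}(-Y, \mathfrak{s}_\xi)$ is extracted by evaluating the parametrized moduli chain at $-Y$. The invariant $\fctilde_\bullet$ is defined analogously on cycles in $\mathcal{C}(Y,\xi,B)$, with the Darboux ball $B$ providing a canonical normalization of the spin-c connection at a basepoint $p_0 \in B$; this extra datum is precisely what converts $\HMto$ into $\HMtilde$ in the $S^1$-equivariant setup.

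The central square, involving $U$ and $U_B$, is the heart of the matter. On the Floer side, $U$ is realized by imposing a codimension-two evaluation constraint at $p_0$. On the parameter side, $U_B$ is the Wang differential, which on chains corresponds to intersecting a $P$-family with the fiber $\mathcal{C}(Y,\xi,B) \subset \mathcal{C}(Y,\xi)$ of $ev_B$ over a generic plane in $T_{p_0}Y$. Under this identification, the intersection is geometrically a basepoint constraint on the family of plane fields at $p_0$; pulling this back into the parametrized moduli space for $\fc_\bullet$ pins down the same codimension-two cycle used to define $U$, establishing $U \circ \fc_\bullet = \fctilde_{\bullet-2} \circ \iota_\ast \circ U_B$ up to sign. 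Translating the picture carefully into the chain-level definitions is the main technical step.

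For the left and right squares, I would exploit the fact that both the Gysin sequence in monopole Floer homology and the Wang sequence for $ev_B$ are instances of Serre-type spectral sequences for $S^1$-fibrations: namely the blow-up of the reducible locus versus $\mathcal{C}(Y,\xi,B) \to \mathcal{C}(Y,\xi) \to S^2$. The maps $j$ and $\iota_\ast$ both correspond geometrically to forgetting the basepoint constraint, while $p$ and the Wang connecting homomorphism both insert it. The commutativity of these two squares then reduces to naturality of $\fc_\bullet$ and $\fctilde_\bullet$ under the inclusion $\iota$ and under the basepoint enhancement, which follows essentially by inspection from the moduli-space definitions once the chain-level models for the two spectral sequences are aligned.

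The main obstacle I anticipate is analytic rather than conceptual: one must ensure that the parametrized moduli spaces used to define $\fc_\bullet$ and $\fctilde_\bullet$ can be made transverse compatibly with the evaluation map at $p_0$, and that orientations can be matched across all three squares, which is the source of the ``up to signs'' clause. In \cite{yo} this is accomplished by combining families transversality with standard gluing arguments for Seiberg--Witten moduli spaces on cylindrical-end $4$-manifolds, and my proof would ultimately be a summary reduction to that machinery.
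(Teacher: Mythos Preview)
This theorem is quoted from \cite{yo}; the present paper does not prove it but only summarises the construction in \S\ref{familiessubsec} and the Remark at the end of that section. Your heuristic---that both $U$ and $U_B$ arise from a codimension-two point constraint---is correct, but your proposed implementation diverges from the actual argument in a way that leaves a genuine gap.

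The main issue is your description of $\fctilde_\bullet$. It is \emph{not} defined by ``a canonical normalization of the spin-c connection at a basepoint''; rather, $\HMtilde$ is realised as the mapping cone of a chain-level $U$ map, and $\fctilde_\bullet$ is the map induced by $\widetilde{\psi} = (\psi\circ\iota_\ast,\theta)$, where $\theta$ is a specific chain homotopy witnessing $U\cdot(\psi\circ\iota_\ast)\simeq 0$. This $\theta$ is built from moduli spaces $\mathcal{M}([\mathfrak{a}],\Delta^n)$ that carry an \emph{additional free parameter} $t\in\mathbb{R}$: one constrains the spinor to lie in the $+2i$-eigenbundle $EK_J^{-1}$ at the \emph{moving} point $(t,0)$ on the cylindrical end. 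The entire diagram is then read off from the ends of the one-dimensional pieces of $\mathcal{M}([\mathfrak{a}],\Delta^n)$ as $t\to\pm\infty$. For a family lying in $\mathcal{C}(Y,\xi,B)$ the $t\to+\infty$ end is empty (since near the symplectisation end the canonical solution $\Phi_u$ sits in the $-2i$-eigenbundle and $\xi_u$ is fixed over $B$), which gives the chain-homotopy identity and hence the right-hand square. For a family in the full $\mathcal{C}(Y,\xi)$ the $t\to+\infty$ end is $M([\mathfrak{a}],\Delta_\ast^{n-2})$, where $\Delta_\ast^{n-2}\subset\Delta^n$ is the preimage of a fibre of $ev_B$; this is precisely the chain-level $U_B$, and yields the middle square.

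Your static ``evaluate at $p_0$'' picture does not by itself produce the chain homotopy $\theta$, so it is unclear how your map would land in $\HMtilde$ at all; and without letting the constraint point run to the symplectisation end there is no mechanism linking the Floer-theoretic point constraint to the contact-geometric condition $\xi_u(0)=\xi(0)$ that defines the fibre of $ev_B$. The side squares likewise fall out of the same $t\to\pm\infty$ analysis rather than from an abstract spectral-sequence naturality argument.
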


Some observations are in order:
\begin{itemize}
\item As a particular case, Theorem \ref{commdiag} recovers a property about the contact invariant $\mathbf{c}(\xi )$ which is well-known from the Heegaard--Floer point of view: that $U \cdot \mathbf{c}(\xi ) = 0 $ and we have a canonical element $\widetilde{\mathbf{c}}(\xi ):= \fctilde_0 (1) \in \HMtilde_{[\xi]} (-Y , \mathfrak{s}_\xi )$ such that $p \widetilde{\mathbf{c}}(\xi ) = \mathbf{c}(\xi )$. Conjecturally, the invariant $\mathbf{c}(\xi )$ corresponds to the Heegaard--Floer contact invariant that takes values in $\widehat{\mathrm{HF}}(-Y, \mathfrak{s}_\xi )$, which is defined in \cite{OScontact}.

\item For two-dimensional families, Theorem \ref{commdiag} gives us the simple formula $$U \cdot \fc_2 (\beta ) = \mathrm{deg}(\beta ) \mathbf{c}(\xi )$$ where $\mathrm{deg}(\beta ) = (ev_B )_\ast \beta \in H_2 ( S^2 ) = \mathbb{Q}$ is the \textit{degree} of the family $\beta \in \mathrm{H}_2 (\mathcal{C}(Y ,\xi , B )  )$. In particular, by Theorem \ref{commdiag} we have the following

\begin{Corollary}[\cite{yo}]\label{cor:NoLagrangianRot} If $\mathbf{c}(\xi ) \notin \mathrm{Im}U$ then the fibration $ev_B$ does not admit a homotopy section and thus the obstruction class $\mathcal{O}_\xi$ is non-vanishing homologically.
\end{Corollary}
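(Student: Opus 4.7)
My plan is to derive both assertions as one-line consequences of the commutative square of Theorem \ref{commdiag} in degree $\bullet = 2$. Applied to a class $\beta \in \mathrm{H}_2(\mathcal{C}(Y,\xi);\Q)$, that square reads
$$U \cdot \fc_2(\beta) \;=\; (\fc_0 \circ \iota_\ast \circ U_B)(\beta).$$
Since $U_B$ is intersection with the fiber of $ev_B$, I have $U_B(\beta) = \mathrm{deg}(\beta)\cdot 1$ in $\mathrm{H}_0(\mathcal{C}(Y,\xi,B);\Q) = \Q$; since both $\mathcal{C}(Y,\xi,B)$ and $\mathcal{C}(Y,\xi)$ are path-connected (the former by Lemma \ref{evfib}), the map $\iota_\ast$ is the identity on $\mathrm{H}_0$; and $\fc_0(1) = \cto(\xi)$ by definition. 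The identity thus simplifies to the key formula
$$U \cdot \fc_2(\beta) \;=\; \mathrm{deg}(\beta) \cdot \cto(\xi),$$
exactly as advertised in the paragraph immediately preceding the Corollary.

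I would then argue both assertions by contradiction. Assume first that the image of $\mathcal{O}_\xi$ in $\mathrm{H}_1(\mathcal{C}(Y,\xi,B);\Q)$ vanishes. As noted before Theorem \ref{commdiag}, this image is the value at $1$ of the Wang connecting map $\mathrm{H}_0(\mathcal{C}(Y,\xi,B)) \to \mathrm{H}_1(\mathcal{C}(Y,\xi,B))$ of the fibration $ev_B$. Its vanishing implies, by exactness of the Wang sequence, that $U_B : \mathrm{H}_2(\mathcal{C}(Y,\xi)) \to \mathrm{H}_0(\mathcal{C}(Y,\xi,B)) = \Q$ is surjective, so I can pick a class $\beta$ of degree one. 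The displayed formula then forces $\cto(\xi) = U \cdot \fc_2(\beta) \in \mathrm{Im}\,U$, contradicting the hypothesis. Hence $\mathcal{O}_\xi$ is non-vanishing homologically.

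For the statement about homotopy sections, a section $s : S^2 \to \mathcal{C}(Y,\xi)$ of $ev_B$ would directly produce a degree-one class $s_\ast[S^2] \in \mathrm{H}_2(\mathcal{C}(Y,\xi);\Q)$, without needing the Wang sequence at all, and plugging this into the formula again forces $\cto(\xi) \in \mathrm{Im}\,U$. Once Theorem \ref{commdiag} is granted the argument is essentially a one-line diagram chase, and I do not foresee any genuine obstacle. The only point that merits a moment of care is the identification of $U_B$ with intersection against the fiber class, which is the standard description of the nontrivial differential in the Serre spectral sequence of a fibration over $S^2$.
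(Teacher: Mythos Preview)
Your proof is correct and follows essentially the same approach as the paper: the corollary is presented there as an immediate consequence of the degree formula $U \cdot \fc_2(\beta) = \mathrm{deg}(\beta)\,\cto(\xi)$, which you derive correctly from the $\bullet = 2$ square of Theorem \ref{commdiag}. Your use of the Wang exact sequence to upgrade ``no homotopy section'' to ``$\mathcal{O}_\xi$ non-vanishing homologically'' is exactly the right step and makes explicit what the paper leaves to the reader.
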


\item Other statements that are easily derived from Theorem \ref{mainthm} are:
\begin{center}
    $\mathbf{c}(\xi) \notin \mathrm{Im}U$ \textit{  if and only if  } $\fctilde_1 (\mathcal{O}_\xi ) \neq 0$
\end{center}
\begin{center}
    $\fctilde_1 (\mathcal{O}_\xi ) = \chi \widetilde{\mathbf{c}}(\xi ).$
\end{center}

\item If we define a $\mathbb{Q}[U]$-module structure on $\mathrm{H}_\bullet (\mathcal{C}(Y , \xi ) )$ by setting $U:= \iota_\ast \circ U_B$ then Theorem \ref{commdiag} asserts, in particular, that the homomorphism $\fc_\bullet : H_\bullet (\mathcal{C}(Y , \xi , B ) ) \rightarrow \HMto_{[\xi]+ \bullet} (- Y , \mathfrak{s}_\xi )$ is a map of $\mathbb{Q}[U]$-modules. Notice that we have, in fact, a $\mathbb{Q}[U]/(U^2)$-module structure on $\mathrm{H}_{2} (\mathcal{C}(Y, \xi) ) $, i.e. the action of $U^2$ on  $\mathrm{H}_\bullet ( \mathcal{C}(Y, \xi ) )$ vanishes. This can be regarded as a manifestation of the following geometric fact, that we have already encountered in \S \ref{dehnsection}. Consider two disjoint Darboux balls $B , B^\prime \subset Y$. Whereas the spaces $\mathcal{C}(Y , \xi )$ and $\mathcal{C}(Y , \xi , B  )$ are related in a possibly non-trivial way by the fibration $ev_B$, the spaces $\mathcal{C}(Y, \xi , B )$ and $\mathcal{C}(Y , \xi , B \cup B^\prime )$ are related in a straightforward way:
$$ \mathcal{C}(Y, \xi , B \cup B^\prime ) \simeq \Omega S^2 \times \mathcal{C}(Y , \xi , B )  .$$
Indeed, the evaluation map corresponding to the ball $B^\prime$ gives a fibration
$$\mathcal{C}(Y, \xi , B \cup B^\prime ) \rightarrow \mathcal{C}(Y , \xi , B ) \xrightarrow{ev_{B^\prime }} S^2 $$
but now the map $ev_{B^\prime }$ is null-homotopic, as can be seen by dragging the evaluation point (the center of $B^\prime$) into the first ball $B$. 
\end{itemize}

\subsection{Summary of the construction of the families invariants}

We summarise in this section the construction of the invariants $\fc$ and $\fctilde$, carried out in detail by the second author in \cite{yo}. This is included here for background purposes. However, the contents of this subsection will not be used later in this article.

\subsubsection{The invariant $\fc$}
We begin with some general observations. Let $X$ be a $4$-manifold together with a non-degenerate $2$-form $\omega$ i.e. $\omega^2 $ is a volume form. We use $\omega^2$ to orient $X$. Choose an almost complex structure $J$ compatible with $\omega$, which by definition gives a metric $g = \omega ( . , J . ) $. The space of choices of $J$ is contractible. The structure $J$ equips $X$ with a spin-c structure, i.e. a lift of the $\mathrm{SO}(4)$-frame bundle of $X$ along the map $\mathrm{Spin}^{c}(4 ) \rightarrow \mathrm{SO}(4)$. In differential-geometric terms this yields rank-two complex hermitian bundles $S^\pm \rightarrow X$ and Clifford multiplication $\rho : TX \rightarrow \mathrm{Hom}(S^+ , S^- )$ satisfying the "Clifford identity" $\rho(v)^\ast\rho ( v) = g(v,v) \mathrm{Id}$. We follow the notation and conventions from \S 1 in \cite{KM} and we assume the reader is familiar with these.


The Clifford action of the $2$-form $\omega$ on $S^+$ splits the bundle $S^+$ into $\mp 2i$ eigen-subbundles of rank $1$. These are given by $S^+ = E \oplus E K_{J}^{-1}$, where $K_J$ is the canonical bundle of $(X, J )$ and $E$ is a complex line bundle which is easily verified to be trivial. Choose a unit length section $\Phi_0$ of $E$. A simple calculation shows that there is a unique spin-c connection $A_0$ on $S^+$ such that $\nabla_{A_0} \Phi_0$ is a $1$-form with values in the $+2i$ eigenspace $E K_{J}^{-1}$. At this point, the symplectic condition comes in through the following calculation involving the coupled Dirac operator $D_{A_0} : \Gamma (S^+) \rightarrow \Gamma (S^- )$

\begin{Lemma}[Taubes \cite{taubessymp}]
The non-degenerate $2$-form $\omega$ is symplectic (i.e. $d \omega = 0$) if and only if $D_{A_0} \Phi_0 =0$. 
\end{Lemma}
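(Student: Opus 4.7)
The plan is to compute $D_{A_0}\Phi_0$ explicitly and show that its vanishing is equivalent to $d\omega=0$. First, I would exploit the Dolbeault decomposition induced by $J$. The splitting $T^*X\otimes \mathbb{C}=T^{1,0}\oplus T^{0,1}$ yields canonical identifications
\[
S^+ \otimes E^{-1} \cong \Lambda^{0,0}X\oplus \Lambda^{0,2}X,\qquad S^-\otimes E^{-1}\cong \Lambda^{0,1}X,
\]
under which Clifford multiplication by a real $1$-form $\alpha$ becomes $\sqrt{2}\bigl(\alpha^{0,1}\wedge(\cdot)-\iota_{\alpha^{1,0}}(\cdot)\bigr)$. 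Under this identification, $\Phi_0$ corresponds to the constant section $1\in \Gamma(\Lambda^{0,0}X)$, and the defining property of $A_0$, namely that $\nabla_{A_0}\Phi_0$ takes values in $EK_J^{-1}$, translates into the requirement that the $\Lambda^{0,0}$-component of $\nabla_{A_0}\Phi_0$ vanishes. This requirement uniquely determines a unitary connection $a_0$ on $E$.

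Next, I would carry out the key local calculation. Choose a local orthonormal coframe $(e^1,e^2,e^3,e^4)$ adapted to $J$ (so $e^2=J^*e^1$, $e^4=J^*e^3$, and $\omega=e^{12}+e^{34}$), write $\nabla_{A_0}=\nabla^{LC}+a_0$ on $S^+\otimes E^{-1}$, and expand
\[
D_{A_0}\Phi_0=\sum_{i=1}^4 \rho(e^i)\bigl(\nabla^{LC}_{e_i}+a_0(e_i)\bigr)\Phi_0\in \Gamma(\Lambda^{0,1}X).
\]
Using Koszul's formula to express $\nabla^{LC}$ in terms of derivatives of $g=\omega(\cdot,J\cdot)$, together with the defining condition for $a_0$, one finds after careful bookkeeping that all purely metric contributions cancel and only components of $d\omega$ survive. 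Crucially, the Nijenhuis tensor $N_J$ only enters through the projection of $\nabla_{A_0}\Phi_0$ onto $\Lambda^{0,0}\otimes E$, which has been killed by the choice of $a_0$; hence the sole obstruction to $D_{A_0}\Phi_0=0$ is $d\omega$. Concretely, the computation yields
\[
D_{A_0}\Phi_0=\text{(universal constant)}\cdot \rho\bigl((d\omega)^{0,2}\bigr)\,\Phi_0,
\]
which vanishes precisely when the $(0,2)$-part of $d\omega$ does. Since $d\omega$ is a real $3$-form and $\omega$ is of type $(1,1)$, the vanishing of $(d\omega)^{0,2}$ (together with its conjugate $(d\omega)^{2,0}$, and the remaining $(2,1)+(1,2)$ part being determined in turn) is equivalent to $d\omega=0$.

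I expect the main obstacle to be the bookkeeping in the second step: juggling Clifford-algebra sign conventions, spin-c versus Chern connections, and the Levi-Civita connection's failure to preserve $J$ (measured by $\nabla^{LC}J$, which itself is expressible through both $d\omega$ and $N_J$). The content of the lemma is the algebraic miracle, exploited throughout Taubes' work on $\mathrm{SW}\Rightarrow\mathrm{Gr}$, that the coefficient of $N_J$ in $D_{A_0}\Phi_0$ vanishes thanks to the specific normalization of $A_0$, leaving only $d\omega$ as the obstruction.
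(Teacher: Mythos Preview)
The paper does not prove this lemma; it is simply quoted with attribution to Taubes, so there is nothing in the paper to compare your argument against. Your overall strategy---use the Dolbeault identification $S^+\cong\Lambda^{0,0}\oplus\Lambda^{0,2}$, $S^-\cong\Lambda^{0,1}$ and carry out a local computation in an adapted coframe---is the standard one and is sound in outline, and your remark that the normalisation of $A_0$ kills the Nijenhuis contribution is correct.

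There is, however, a genuine error at the end. Your displayed formula
\[
D_{A_0}\Phi_0=\text{(const)}\cdot\rho\bigl((d\omega)^{0,2}\bigr)\Phi_0
\]
cannot be right: $d\omega$ is a $3$-form on a manifold of complex dimension $2$, so its only possible Hodge bidegrees are $(2,1)$ and $(1,2)$. There is no $(0,2)$ component of a $3$-form, and the sentence that follows---that the $(2,1)+(1,2)$ part is then ``determined in turn''---has no content. What the local computation actually produces, up to a nonzero constant, is $\rho(d\omega)\Phi_0$, equivalently Clifford multiplication by the Lee form $\theta$ determined by $d\omega=\theta\wedge\omega$ (the map $\alpha\mapsto\alpha\wedge\omega$ is an isomorphism $\Lambda^1\to\Lambda^3$ in real dimension~$4$). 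The final step is then the observation that for a \emph{real} $1$-form $\theta$ one has $\rho(\theta)\Phi_0=\sqrt{2}\,\theta^{0,1}$ under $S^-\cong\Lambda^{0,1}$, so $\rho(\theta)\Phi_0=0$ iff $\theta^{0,1}=0$ iff $\theta=0$ iff $d\omega=0$. Replace your flawed bidegree argument by this.
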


We now bring in a smoothly varying family of symplectic structures $\omega_u $ parametrised by a smooth manifold $U \ni u$, with each $\omega_u$ in the same deformation class as $\omega$. Again, we equip the $\omega_u$'s with compatible almost complex structures $J_u$ varying smoothly, which provide us with a family of metrics $g_u$. From our original Clifford bundle $(S^\pm , \rho )$ we canonically obtain new ones as follows. The bundles $S^\pm$ remain the same but new Clifford structures $\rho_u$ are obtained by setting $\rho_u = \rho \circ b_u$ where $b_u$ is the canonical isometry $(TX , g_u ) \xrightarrow{\cong} (TX , g )$ (the unique isometry which is positive and symmetric with respect to $g_u$). The Clifford action of $\omega_u $ again decomposes $S^+$ into eigenspaces $S^+ = E_u \oplus E_u  K_{J_u}^{-1}$. Each $E_u$ is trivializable individually but the family $( E_u )_{u \in U}$ might give a non-trivial line bundle over $U \times X$. When $U$ is contractible then we may choose a family of trivialising sections $\Phi_u$ of $E_u$ with unit length, and as before these determine unique spin-c connections $A_u$ with $D_{A_u} \Phi_u = 0 $. Then, associated to our family $(\omega_u , J_u )$ and the choices of $\Phi_u$ we have a family of "deformed" Seiberg-Witten equations on $X$ given by 
\begin{align*}
    & \frac{1}{2} \rho_u (F^{+}_A )- (\Phi \Phi^\ast )_0  = \frac{1}{2} \rho_u (F^{+}_{A_u} ) - ( \Phi_u \Phi_{u}^\ast )_0  \\
    & D_{A} \Phi = D_{A_u } \Phi_u .
\end{align*}
For each $u \in U$ this is an equation on the pair $(A, \Phi )$, where $A$ is a connection on $\Lambda^2 S^+$ and $\Phi $ is a section of $S^+$. In this "deformed" version of the equations the configurations $(A_u , \Phi_u )$ solve the equation for $u$.

We apply now the above considerations to a special case. Let $(Y, \xi )$ be a closed contact $3$-manifold with a contact form $\alpha$, and let $(X, \omega ) $ be the \textit{symplectisation} $X = [1, + \infty ) \times Y$, with the exact symplectic form $\omega = d (\frac{t^2}{2} \alpha )$. The structure $J$ is chosen to be invariant under the Liouville flow, and the associated Riemannian metric on $X$ is conical. We now bring into the picture a family of contact structures $\xi_u$ parametrised by $U = \Delta^n$, to which we would like to associate an element in the Floer chain complex of $-Y = \partial X$. Here $\Delta^n$ is the standard $n$-simplex. We equip our family $\xi_u$ with corresponding contact forms $\alpha_u$. This gives a family $\omega_u$ of symplectic structures on $X$. 

The construction now proceeds by forming a manifold $Z^+$ by gluing the cylinder $Z = (- \infty , 0 ] \times Y$ with the symplectic manifold $X$. We extend all metrics $g_u$ over to $Z^+$ in such a way that they all agree with a fixed translation-invariant metric on the cylinder $Z$. Then the bundle $S^+$, together with its splitting $S^+ = E \oplus E K_{J}^{-1}$, extends over $Z^+$ naturally in a translation-invariant manner. The $U$-family of metrics and spin-c structures thus constructed on $Z^+$ are independent of $u$ over $Z$, so we have effectively trivialised our data over the cylinder end $Z \subset Z^+$. In order to extend the Seiberg-Witten equations over $Z^+$ we cut off the perturbation term on the right-hand side of the equations so that it vanishes on the cylinder end $Z$. This way, we have a $U$-parametric family of Seiberg-Witten equations over $Z^+$, and natural boundary conditions for these equations (modulo gauge) are
\begin{itemize}
    \item on the cylinder $Z$ solutions should approach a translation-invariant solution $\mathfrak{a}$ (a generator of the "to" Floer complex $\widecheck{\mathrm{C}} (- Y , \mathfrak{s}_\xi  ) $, i.e. $\mathfrak{a}$ is an irreducible or boundary stable monopole on $-Y$)
    \item on the symplectic end $X$ solutions should approach the configuration $(A_u , \Phi_u )$.
\end{itemize}
This way we obtain parametrised moduli spaces of solutions $$\pi : M ([\mathfrak{a}] , \Delta^n ) \rightarrow \Delta^n . $$ By introducing suitable perturbations we may achieve the necessary transversality \cite{yo} and $M([\mathfrak{a}] , \Delta^n ) $ will be $C^1$-manifolds of finite dimension. At this point we note that, because of the gauge-invariance of the equations, a different choice of trivialisations $\Phi_u$ would yield diffeomorphic moduli spaces. The connected components of $M ( [\mathfrak{a}] , \Delta^n )$ where the index of $\pi$ is $-n$ consist of a finite number of isolated points lying over values in the interior of $\Delta^n$, and a signed count of these points gives an integer $\# M([\mathfrak{a}] , \Delta^n ) \in \mathbb{Z}$. We organise these counts into a Floer chain $\psi (\Delta^n )$ $$\psi (\Delta^n ) = \sum_{[\mathfrak{a}]} \# M ([\mathfrak{a}] , \Delta^n ) \cdot [\mathfrak{a}]  \in \widecheck{\mathrm{C}}(- Y , \mathfrak{s}_\xi ) .$$ 
The assignment $\Delta^n \mapsto \psi (\Delta^n )$ can be made into a chain map $$\psi : \mathrm{C}_\bullet (\mathcal{C}(Y , \xi ) )  \rightarrow \widecheck{\mathrm{C}}_{\bullet}(- Y , \mathfrak{s}_\xi ) $$
from the complex of singular chains on $\mathcal{C}(Y , \xi )$. Taking homology yields the families invariant (\ref{fc}). The analytic underpinning that make all the above rigorous are discussed in \cite{yo}, and are essentially no different than those of \cite{monocont,taubesestimates}.

\subsubsection{The invariant $\fctilde$}

In terms of the "to" Floer complex $\widetilde{C}_\bullet$, the "tilde" Floer complex can be defined by taking the mapping cone of (a suitable chain level version of) the $U$ map. We have $\widetilde{C}_\bullet (Y , \mathfrak{s} ) = \widecheck{C}_{\bullet}( Y , \mathfrak{s} ) \oplus \widecheck{C}_{\bullet-1}( Y , \mathfrak{s} )$ with differential given by the matrix (ignoring signs) $$\widetilde{\partial}= \begin{pmatrix} \widecheck{\partial} & 0 \\ U & \widecheck{\partial}  \end{pmatrix}. $$

If a family $\beta \in \mathrm{H}_n ( \mathcal{C}(Y , \xi ) )$ is in the image of $\iota_\ast : \mathrm{H}_n ( \mathcal{C}(Y , \xi , B ) ) \rightarrow \mathrm{H}_n ( \mathcal{C}(Y , \xi  ) )$ then it is proved in \cite{yo} that $U \cdot \fc(\beta ) = 0$. At the chain level this is witnessed by a canonical chain homotopy $\theta$:
\begin{align}
U \cdot \psi \circ \iota_\ast  = \widecheck{\partial} \theta + \theta \partial . \label{chainid}
\end{align}
From this we build the chain map $$\widetilde{\psi} = (\psi \circ \iota_\ast , \theta ) : \mathrm{C}_\bullet ( \mathrm{C}(Y , \xi, B ) ) \rightarrow \widetilde{C}_\bullet (-Y , \mathfrak{s}_\xi )$$
which, upon taking homology gives the definition of (\ref{fctilde}). The chain homotopy $\theta$ is roughly constructed as follows. We introduce a new parameter $t \in \mathbb{R}$ and let $0 \in Y$ be the center of the ball $B$. Consider the moduli space $$\mathcal{M}([\mathfrak{a}] , \Delta^n ) \rightarrow \mathbb{R} \times \Delta^n$$ consisting of quadruples $(A, \Phi , u , t )$ such that $(A, \Phi , u )$ solve the previous set of equations and boundary conditions subject to the further constraint that at the point $(t,0 ) \in \mathbb{R} \times Y \cong Z^+$ the spinor $\Phi$ lies in the second component of the splitting $S^+ = E \oplus E K_{J}^{-1}$. By a simple modification of this construction one can again achieve transversality and ensure that the $\mathcal{M}([\mathfrak{a}], \Delta^n )$ are $C^1$-manifolds of finite dimension. Then we set
\begin{align*}
    \theta ( \Delta^n ) = \sum_{[\mathfrak{a}]} \# \mathcal{M}([\mathfrak{a}] , \Delta^n )\cdot [\mathfrak{a}].
\end{align*}

Theorem \ref{commdiag} is established by carefully analysing the "boundary at infinity" of the $1$-dimensional components of the moduli $\mathcal{M}([\mathfrak{a}], \Delta^n )$, see \cite{yo}.



\section{The space of standard convex spheres in a tight contact $3$-manifold}\label{spheressection}

In this section we provide background on an $h$-principle for standard convex embeddings in tight contact $3$-manifolds which was established in work of the first author with J. Mart\'inez-Aguinaga and F.Presas \cite{fmp}. For the sake of completeness, we will provide here a detailed account which isn't quite the same as in \cite{fmp}. 

Throughout this section $(Y, \xi )$ will be a tight contact $3$-manifold. Recall that given a contact $3$-manifold a $(Y,\xi)$, by a standard convex embedding of $S^2$ we mean a convex embedding $e:S^2 \hookrightarrow (Y,\xi)$ such that its oriented characteristic foliation $( e^* \xi )\cap TS^2$ coincides with the characteristic foliation of the sphere $$e_0:S^2\hookrightarrow \{0\}\times S^2\subset (Y_0,\xi_0)=([-1,1]\times S^2,\ker(zds+\frac 12 xdy-\frac 12 ydx)).$$

In fact, by this property we obtain a (homotopically) unique contact embedding of a neighbourhood of $e_0 (S^2 ) \subset Y_0$ inside $Y$ such that $e_0$ identifies with $e$. We recall that the north pole of $e$ is then a positive elliptic point and the south pole a negative elliptic point. See Figure \ref{fig:sphere}.

All of our arguments below work well for any other foliation of a convex sphere. The key fact is that the space of tight convex spheres with fixed characteristic foliation is $C^0$-dense inside the space of smooth spheres when the contact $3$-manifold is tight, because of Giroux's Genericity and Realisation Theorems and Giroux's Tightness Criterion \cite{convexite,GirouxCriterion}. 

\begin{Remark} The tightness condition is just required "locally", and therefore the results described in this section hold in overtwisted contact $3$-manifolds if one replaces the space of smooth spheres by the space of smooth spheres with a tight neighbourhood.
\end{Remark}

The main goal of this section is Theorem \ref{thm:LinksSpheres}, which states that the space of standard embeddings of spheres into $(Y,\xi)$ fixed near the south pole is homotopy equivalent to the corresponding space of smooth embeddings. In order to prove this result we will first study the closely related space of "mini-disks". 

\begin{figure}[h!]
\centering
\includegraphics[scale=0.8]{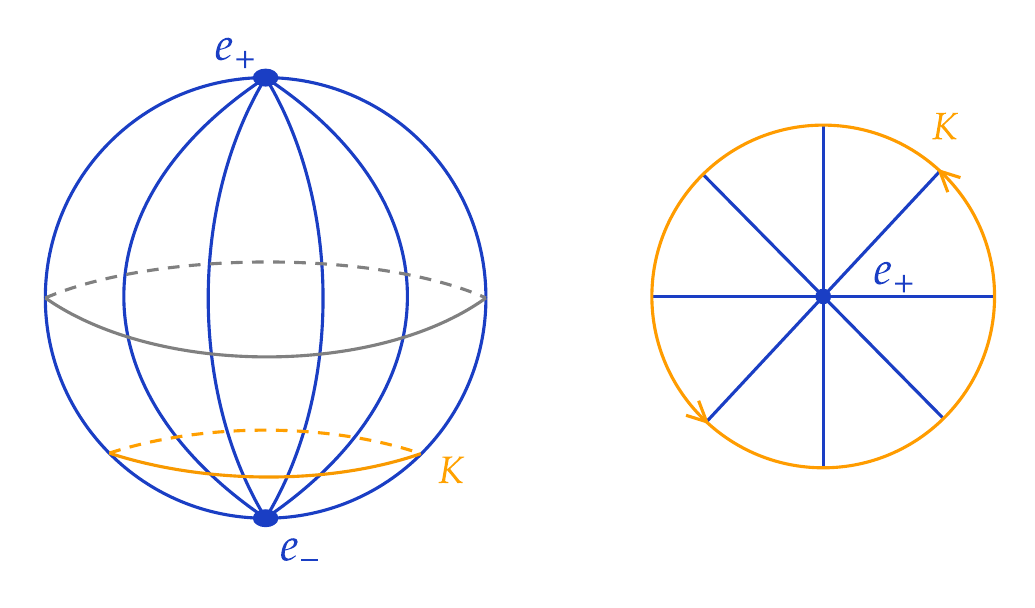}
\caption{Schematic depiction of the standard sphere and the transverse curve $K$ on the left. The mini-disk is depicted on the right.}
\label{fig:sphere}
\end{figure}

\subsection{Mini-disks in a tight $3$-manifold}

Pick a small positively transverse curve $K\subseteq e(S^2)$ surrounding the negative elliptic point $e(s)$. The curve $K$ divides the standard embedded sphere $e(S^2)$ in two disjoint disks $e(S^2)\backslash K=D^2_+\cup D^2_{-}$. Here $D^{2}_+$ contains the positive elliptic point and $D^{2}_{-}$ the negative one. In particular, we observe that the self-linking number of $K$ is $-1$. The curve $K$ is oriented as the boundary of $D^2_{+}$. Each disk $D^2_{\pm}$ is equiped with a natural parametrization induced by $e$. In particular, we will still denote by $e:\mathbb{D}^2\rightarrow (Y,\xi)$ the parametrization of $D^2_+$. A smooth embedding of a disk with positive transverse boundary with self-linking number $-1$, which is convex and induces the same characteristic foliation as $e$ is called a \textit{mini-disk}\footnote{The terminology is due to F.Presas. See Figure \ref{fig:sphere}. Observe that a mini-disk can be contracted inside small neighbourhoods of the positive elliptic point.}. 

We will denote by $\mathrm{CEmb}(\mathbb{D}^2,(Y,\xi))$ the space of embeddings of mini-disks which coincide with $e$ over an open neighbourhood of the boundary $\partial \mathbb{D}^2\subset \mathbb{D}^2$. Define the space of smooth embeddings $\mathrm{Emb}(\mathbb{D}^2,Y)$ analogously.
A consequence of Giroux's Elimination Theorem and the tightness of $(Y,\xi)$ is the following result, which will be crucial to us:

\begin{Lemma}\label{Density}(\cite{convexite,EliashbergTransverse})
Let $f\in\mathrm{Emb}(\mathbb{D}^2,Y)$ be a smooth embedding. Then there exists a $C^0$-small isotopy of $f$, relative to an open neighbourhood of the boundary, that makes makes $f$ standard.
\end{Lemma}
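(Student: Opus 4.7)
The plan is to combine three classical tools of convex surface theory: Giroux's Genericity Theorem, the Bennequin--Eliashberg inequality together with Giroux's Tightness Criterion (to pin down the dividing set topology), and Giroux's Realisation Theorem (to prescribe the characteristic foliation).

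First, I would apply Giroux's Genericity Theorem to perturb $f$ by a $C^\infty$-small (and hence $C^0$-small) ambient isotopy, supported away from the collar of $\partial\mathbb{D}^2$ on which $f$ already coincides with $e$, to produce an embedding $f_1$ whose image is convex with positively transverse boundary $K$. The contact structure then determines a dividing multicurve $\Gamma\subset f_1(\mathbb{D}^2)\setminus K$.

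Next, I would show that $\Gamma$ is a single circle parallel to $K$, matching the dividing set of the model $e$. The Bennequin inequality in the tight manifold $(Y,\xi)$ reads $\mathrm{sl}(K)\leq -\chi(\mathbb{D}^2)=-1$, and in our setting this is an equality. Combined with Giroux's Tightness Criterion --- which forbids closed dividing curves bounding monochromatic disks in a tight neighbourhood --- saturation of the Bennequin bound forces $\Gamma$ to have exactly one component, necessarily a circle parallel to $K$.

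Finally, I would invoke Giroux's Realisation Theorem to modify the characteristic foliation of $f_1(\mathbb{D}^2)$, keeping $\Gamma$ fixed, until it agrees with that of $e$. Realisation supplies a $C^0$-small ambient isotopy supported in an arbitrarily thin collar of $f_1(\mathbb{D}^2)$ and disjoint from the boundary neighbourhood where $f=e$. Post-composing $f$ with an appropriate rel-boundary diffeomorphism of $\mathbb{D}^2$ then produces a mini-disk, yielding the desired $C^0$-small isotopy of the embedding. The main obstacle, in my view, is the dividing-set analysis in the second step: one must leverage tightness (through Giroux's Criterion) to rule out extra pairs of parallel closed dividing curves compatible with the self-linking number alone. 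Once this is achieved, the genericity and realisation steps are routine applications of convex surface theory.
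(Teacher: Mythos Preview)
Your route is correct. The paper does not spell out a proof but attributes the lemma to Giroux's \emph{Elimination} Lemma --- Eliashberg's original argument: perturb the characteristic foliation to be Morse--Smale, use tightness to rule out closed leaves and separatrix cycles, then cancel elliptic--hyperbolic pairs of the same sign until only a single positive elliptic point remains (the constraints $e_+-h_+=1$ and $e_--h_-=0$, coming from $\chi(\mathbb{D}^2)=1$ and $\mathrm{sl}(K)=-1$, make this possible). Your Genericity/dividing-set/Realisation package is the convex-surface-theory repackaging of the same mechanism, and is in fact exactly what the paper invokes a few lines earlier for the parallel density statement about spheres; both approaches use tightness at the same essential step. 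One precision on your middle step: Giroux's Criterion in its standard form is stated for closed surfaces or surfaces with Legendrian boundary, and your phrasing (``forbids dividing curves bounding monochromatic disks'') is not literally right here, since in the model itself the unique dividing circle already bounds a monochromatic $R_+$ subdisk. The clean way to run your argument is to cap $f_1(\mathbb{D}^2)$ with the fixed small disk $D^2_-$ (possible because $f$ agrees with $e$ near $\partial\mathbb{D}^2$) to obtain a convex sphere and then apply the $S^2$ version of the criterion, which forces a single dividing circle necessarily lying in the big-disk piece; Realisation then finishes as you describe.
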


This result is also explained in \cite{EliashbergTransverse}. See also \cite{colin}. Here, the tightness condition is crucial. 

We will prove the following $h$-principle:
\begin{Theorem}(\cite{fmp})\label{thm:MiniDisks}
The inclusion $\mathrm{CEmb}(\mathbb{D}^2,(Y,\xi))\hookrightarrow \mathrm{Emb}(\mathbb{D}^2,Y)$ is a homotopy equivalence whenever $(Y,\xi)$ is tight.
\end{Theorem}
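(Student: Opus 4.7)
The plan is to verify that the inclusion induces surjections and injections on all homotopy groups $\pi_k$ by a single parametric argument promoting Lemma \ref{Density} from a single mini-disk to smooth families of disks. The main tools are parametric versions of Giroux's Genericity and Realisation Theorems, together with Giroux's Criterion applied pointwise in the parameter space.

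For surjectivity on $\pi_k$, I would start with a smooth family $f : S^k \to \mathrm{Emb}(\mathbb{D}^2, Y)$. Since every $f(u)$ agrees with $e$ on an open neighbourhood of $\partial \mathbb{D}^2$, a parametric Giroux Genericity Theorem applied relative to this boundary neighbourhood produces an arbitrarily $C^\infty$-small isotopy $\{f_t\}_{t\in[0,1]}$ with $f_0 = f$ such that every $f_1(u)$ is convex, with dividing set varying smoothly in $u \in S^k$. By tightness of $(Y,\xi)$ and Giroux's Criterion, each such dividing set must be a single arc on $f_1(u)(\mathbb{D}^2)$ meeting the transverse boundary $K$ in two points, hence all dividing sets have the same topological type across the family. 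A parametric Giroux Realisation Theorem then furnishes an ambient isotopy, depending smoothly on $u$ and supported away from the boundary, that deforms the characteristic foliation on each $f_1(u)$ to the standard one, depositing the family inside $\mathrm{CEmb}(\mathbb{D}^2,(Y,\xi))$.

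For injectivity on $\pi_k$ one runs the same argument on a smooth extension $g : \mathbb{D}^{k+1} \to \mathrm{Emb}(\mathbb{D}^2, Y)$ of a given $g|_{S^k} : S^k \to \mathrm{CEmb}(\mathbb{D}^2,(Y,\xi))$, now relative to the boundary $S^k = \partial \mathbb{D}^{k+1}$ where $g$ already takes values in $\mathrm{CEmb}$. Both the parametric Genericity and Realisation steps admit such relative versions, so they leave $g|_{S^k}$ untouched and produce the required nullhomotopy inside $\mathrm{CEmb}(\mathbb{D}^2,(Y,\xi))$.

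The main obstacle is the rigorous execution of the parametric Genericity step relative to the prescribed boundary behaviour: one must ensure that no birth--death of dividing curves occurs across codimension-one strata of parameter space in a way that would break the uniform application of Giroux's Criterion. Tightness of $(Y,\xi)$ is what saves us: any dividing set on a mini-disk bounded by a positively transverse unknot of self-linking number $-1$ must be a single arc, because a closed dividing curve would bound a disk violating Giroux's Criterion. Hence the topological type of the dividing set is constant across the family. Once this control is in place, the parametric Realisation step is essentially a Moser-type argument depending smoothly on the parameter.
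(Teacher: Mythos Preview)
Your proposal has a genuine gap at the very first step. You invoke a ``parametric Giroux Genericity Theorem'' to arrange that \emph{every} $f_1(u)$ is convex after a small isotopy, but no such statement is available: convexity is $C^\infty$-dense among single embeddings, but it is \emph{not} generic in $k$-parametric families for $k\geq 1$. In a generic one-parameter family of surfaces the convex locus is the complement of a codimension-one subset of the parameter space where the characteristic foliation has a retrogradient connection, and at those parameter values there is no dividing set at all. Your subsequent appeal to Giroux's Criterion to control the dividing set therefore has nothing to act on at the bad parameters. The paper explicitly flags this obstruction in the remark preceding the proof: Colin's discretization trick, which is essentially what you are attempting, ``does not quite work parametrically due to the fact that convexity is not generic among $k$-parametric families, $k>0$.''

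The paper sidesteps this by a completely different mechanism. It does not try to make the whole family convex. Instead it thickens each disk to a tubular neighbourhood $E^k(\mathbb{D}^2\times[-1,1])$, which is a tight $3$-ball, and considers the space $\mathcal{X}$ of pairs (thickening, isotopy of the core disk to a mini-disk inside that thickening). The forgetful map $\mathcal{X}\to\mathcal{B}$ is a Serre microfibration with non-empty fibers (by the non-parametric Lemma~\ref{Density}) that are moreover contractible, since the fiber is the space of such isotopies inside a fixed tight ball and this is governed by the Eliashberg--Mishachev and Hatcher theorems. Weiss's lemma then upgrades the microfibration to a Serre fibration with contractible fibers, and a section-extension argument finishes the proof. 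The point is that the parametric difficulty is absorbed into an abstract lifting problem whose local input is only the $0$-parametric density statement plus the known homotopy type of $\mathrm{Cont}(\mathbb{B}^3,\xi_{\mathrm{st}})$ and $\mathrm{Diff}(\mathbb{B}^3)$.
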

\begin{Remark}
\begin{itemize}
    \item The $\pi_0$-surjectivity of the previous map follows from the previous Lemma. 
    \item The $\pi_0$-injectivity and also the $\pi_1$-surjectivity follows from Colin \cite{colin}. Colin proved this by applying his Discretization Trick. However, this does not quite work parametrically due to the fact that convexity is not generic among $k$-parametric families, $k>0$.
\end{itemize}
\end{Remark}

Here, we will use the approach of \cite{fmp} based on the notion of a \textit{microfibration}, introduced by M.Gromov \cite{GromovPDR}. We will apply the following "microfibration trick", which can also be applied to an arbitrary space of convex embeddings whenever this space is dense inside the space of smooth embeeddings (Lemma \ref{Density}) and we are able to establish a corresponding local version of the $h$-principle (i.e. in a neighbourhood of a smooth embedding). These ingredients are the same as those required to effectively apply Colin's trick. Our advantage with respect to Colin's is that our techniques work parametrically. However, we lose control of the geometric picture by using an algebraic construction.

\begin{Definition}
A map $p:Y\rightarrow X$ of topological spaces is a \textit{Serre microfibration} if for every homotopy $H:\mathbb{D}^k\times [0,1]\rightarrow X$ with a lift $h_0:\mathbb{D}^k\times \{0\}\rightarrow Y$ along $p$ at time $t=0$ there exists a positive real number $\varepsilon>0$ together with an extension $h:\mathbb{D}^k \times [0,\varepsilon]\rightarrow Y$ of $h_0$ such that $p\circ h=H_{|\mathbb{D}^k\times[0,\varepsilon]}$.
\end{Definition}

A key property about microfibrations that we will use is:

\begin{Lemma}(\cite{Weiss})\label{microfibration}
Every microfibration $p:Y\rightarrow X$ with \textbf{non-empty} and contractible fiber is a Serre fibration and, therefore, a weak homotopy equivalence.
\end{Lemma}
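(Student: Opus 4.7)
The statement splits into two logically distinct claims: that $p$ is a Serre fibration and that it is a weak homotopy equivalence. The second is an automatic consequence of the first, because the long exact sequence of homotopy groups of a Serre fibration together with the hypothesis that every fiber is non-empty and weakly contractible forces $p_\ast : \pi_\bullet (Y) \to \pi_\bullet (X)$ to be an isomorphism. So the whole content is to upgrade the short-time microlift supplied by the microfibration property to a full homotopy lift over $\mathbb{D}^k \times [0,1]$.

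My approach is to solve the canonical lifting problem
\begin{center}
\begin{tikzcd}
\mathbb{D}^k \times \{0\} \arrow[r, "h_0"] \arrow[d, hook] & Y \arrow[d, "p"] \\
\mathbb{D}^k \times [0,1] \arrow[r, "H"] \arrow[ur, dashed] & X
\end{tikzcd}
\end{center}
by a subdivide-and-glue argument. Using compactness of $\mathbb{D}^k \times [0,1]$, I would pick a sufficiently fine triangulation $\mathcal{T}$ and, for each simplex $\sigma$ of $\mathcal{T}$, extract a local lift of $H|_\sigma$ from the microfibration property. I then try to assemble these local lifts into a single global lift by inducting on the skeleta of $\mathcal{T}$, starting from $h_0$ on the bottom face.

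The contractibility of the fibers enters in the gluing step. When extending a partial lift from the boundary $\partial \sigma$ of a top-dimensional simplex to $\sigma$ itself, I have two candidate lifts of $H|_{\partial \sigma}$: the one inherited by induction, and the one supplied by the microfibration. After refining the subdivision further, their discrepancy is controlled by a map into a single fiber of $p$, which is null-homotopic by hypothesis; so the local lift can be deformed to agree with the inductive one on $\partial \sigma$ and then glued in. The main obstacle is organising this induction rigorously: the existence time $\varepsilon$ in the microfibration property depends on the parametric family being lifted, so the subdivision scale and the extensions across skeleta must be arranged to be compatible with this \emph{a priori} non-uniform data. Handling this bookkeeping carefully is the technical heart of Weiss's original proof in \cite{Weiss}, and once it is carried through one obtains the desired Serre fibration property, from which the weak equivalence statement follows as explained above.
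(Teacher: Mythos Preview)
The paper does not give its own proof of this lemma; it simply quotes the result from \cite{Weiss} and uses it as a black box. So there is nothing to compare your argument against here.

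That said, your outline is the right shape and matches Weiss's original strategy: subdivide $\mathbb{D}^k \times [0,1]$, use the microfibration property to produce short-time lifts over small cells, and use contractibility of the fibers to reconcile the local lifts along an induction over skeleta. You correctly identify the genuine difficulty, namely that the $\varepsilon$ in the microfibration axiom is not uniform and depends on the partial lift already constructed, so one cannot fix the subdivision once and for all in advance. What you have written is a plan rather than a proof; the actual bookkeeping (how to organise the induction so that at each stage one only needs a \emph{relative} lifting problem whose obstruction lives in a contractible fiber, and how to iterate this despite the non-uniform $\varepsilon$) is exactly the content of Weiss's argument, which you defer to. For the purposes of this paper that is entirely appropriate, since the authors themselves do the same.
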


\begin{proof}[Proof of Theorem \ref{thm:MiniDisks}]
Let $K$ be a compact parameter space and $G\subset K$ a subspace. Consider $e^k\in\mathrm{Emb}(\mathbb{D}^2,Y)$, $k\in K$, a family of smooth embeddings such that $e^k\in \mathrm{CEmb}(\mathbb{D}^2,(Y,\xi))$ for every $k\in G$. It is enough to establish the existence of a homotopy $e^k_t\in\mathrm{Emb}(\mathbb{D}^2,Y)$ such that 
\begin{itemize}
    \item $e^k_0=e^k,$
    \item $e^k_t=e^k_0$ for $k\in G$, 
    \item $e^k_1\in \mathrm{CEmb}(\mathbb{D}^2,(Y,\xi))$.
\end{itemize}
Consider any extension of the embeddings $e^k$ into a family of closed tubular neighbourhood embeddings $$E^k:\mathbb{D}^2\times [-1,1]\hookrightarrow Y$$ such that $$ E^k_{|\mathbb{D}^2\times\{0\}}=e^k.$$
Consider the space $\mathcal{B}$ of embeddings $E:\mathbb{D}^2\times [-1,1]\hookrightarrow Y$ such that $E_{|\mathbb{D}^2\times\{0\}}$ coincides with $e$ over an open neighbourhood of the boundary of $\mathbb{D}^2=\mathbb{D}^2\times\{0\}$. This space is the base of a microfibration 
$$ p:\mathcal{X}\rightarrow \mathcal{B}, (E,p_t)\mapsto E$$
where the space $\mathcal{X}$ is the space consisting of pairs $(E,p_t)$ such that 
\begin{itemize}
    \item $E\in\mathcal{B}$,
    \item $p_t\in\mathrm{Emb}(\mathbb{D}^2,E(\mathbb{D}^2\times[-1,1]))$, $t\in[0,1]$, is a homotopy of proper embeddings of disks into the closed ball $E(\mathbb{D}^2\times[-1,1])$, agreeing with the fixed embedding $e$ near the boundary, and joining $p_0=E_{|\mathbb{D}^2\times \{0\}}$ with a mini-disk embedding $p_1\in \mathrm{CEmb}(\mathbb{D}^2,(E(\mathbb{D}^2\times[-1,1]),\xi))$. 
\end{itemize}
The microfibration property is obviously satisfied. We will use Lemma \ref{microfibration} to conclude that $p$ is, in fact, a fibration. Observe that the fiber $\mathcal{F}_E$ of $p$ is non-empty because of Lemma \ref{Density}. We claim that the fiber is also contractible. This is equivalent to the fact that the space of mini-disks embeddings, fixed near the boundary, into a tight $3$-ball is homotopy equivalent to the space of  smooth embeddings, fixed near the boundary, which is a combination of Eliashberg--Mishachev's Theorem \cite{EM} and Hatcher's Theorem \cite{hatchersmale}. Indeed, denote by $(\mathbb{B}^3,\xi)=(E(\mathbb{D}^2\times[-1,1]),\xi)$ and consider any mini-disk embedding $f:\mathbb{D}^2\rightarrow (\mathbb{B}^3,\xi)$ which coincides near the boundary with $E_{|\mathbb{D}^2\times\{0\}}$. The complement of $f(\mathbb{D}^2)$ in $\mathbb{B}^3$ is given by two tight balls $\mathbb{B}^3_\pm$. Denote by $\mathrm{CEmb}(\mathbb{D}^2,(\mathbb{B}^3,\xi))$ the corresponding space of mini-disks embeddings and by $\mathrm{Emb}(\mathbb{D}^2,\mathbb{B}^3)$ the smooth analogue. There is a map between fibrations 
\[
\begin{tikzcd}
  \mathrm{Diff}(\mathbb{B}^3_+)\times \mathrm{Diff}(\mathbb{B}^3_-) \arrow{r} & \mathrm{Diff}(\mathbb{B}^3)  \arrow{r} & \mathrm{Emb}(\mathbb{D}^2,\mathbb{B}^3) \\ \mathrm{Cont}(\mathbb{B}^3_+,\xi)\times\mathrm{Cont}(\mathbb{B}^3_-,\xi)  \arrow{u}\arrow{r} &
\mathrm{Cont}(\mathbb{B}^3,\xi) \arrow{u} \arrow{r} &  \mathrm{CEmb}(\mathbb{D}^2,(\mathbb{B}^3,\xi)) \arrow{u}
\end{tikzcd}
\]
inducing homotopy equivalences of total spaces and fibers, and thus the claim follows. Then from lemma \ref{microfibration} we have that the map $p:\mathcal{X}\rightarrow \mathcal{B}$ is a Serre fibration with contractible fibers. This is enough to conclude the proof. Indeed, recall that we have built a map $j:K\rightarrow \mathcal{B}, k\mapsto E^k$, so $j^*\mathcal{X}\rightarrow K$ is a fibration with contractible non-empty fibers. We have a natural section over $G\subset K$ given by the constant homotopy: $$s:G\rightarrow \mathcal{X}, k\mapsto (E^k,p^k_t=e^k).$$ Hence, by the contractibility of the fibers we may extend this section over to $K$ and obtain $\hat{s}:K\rightarrow \mathcal{X},k\mapsto (E^k,p^k_t)$. The homotopy $$ e^k_t=p^k_t $$ solves our problem.
\end{proof}

We will need the following generalisation. Let $\mathrm{CEmb}(\sqcup_j \mathbb{D}^2,(Y,\xi))$ be the space of embeddings $e:\sqcup_j \mathbb{D}^2\hookrightarrow (Y,\xi)$ of $n$ mini-disks, all of them fixed at an open neighbourhood of $\sqcup_j \partial\mathbb{D}^2$. Denote also by $\mathrm{Emb}(\sqcup_j \mathbb{D}^2,Y)$ the corresponding space of smooth embeddings. 

\begin{Theorem}\label{MiniDisksLinks}
The natural inclusion $\mathrm{CEmb}(\sqcup_j \mathbb{D}^2,(Y,\xi))\hookrightarrow \mathrm{Emb}(\sqcup_j \mathbb{D}^2,Y) $ is a homotopy equivalence whenever $(Y,\xi)$ is tight.
\end{Theorem}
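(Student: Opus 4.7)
The plan is to adapt the microfibration argument used in the proof of Theorem \ref{thm:MiniDisks} to the case of $n$ disjoint disks. Let $K$ be a compact parameter space with a subspace $G\subset K$, and let $e^k\in\mathrm{Emb}(\sqcup_j\mathbb{D}^2,Y)$ be a family of smooth embeddings such that $e^k\in\mathrm{CEmb}(\sqcup_j\mathbb{D}^2,(Y,\xi))$ for every $k\in G$. I will construct a homotopy $e^k_t$ keeping $e^k$ fixed for $k\in G$ and $t=0$, and making $e^k_1$ into a disjoint union of mini-disks.

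The first step is to thicken each disk into a disjoint tubular neighbourhood: by standard arguments, and shrinking the tube radii continuously, one can extend the family $e^k$ to a family of \emph{disjoint} tubular embeddings $E^k:\sqcup_j(\mathbb{D}^2\times[-1,1])\hookrightarrow Y$ with $E^k_{|\mathbb{D}^2\times\{0\}}=e^k$. Let $\mathcal{B}$ be the space of such disjoint tubular embeddings that agree with the original $e$ near the boundary $\sqcup_j \partial\mathbb{D}^2$. Over $\mathcal{B}$ define the microfibration
\[
p:\mathcal{X}\rightarrow \mathcal{B},\quad (E,\{p^j_t\}_j)\mapsto E,
\]
where $\mathcal{X}$ consists of pairs $(E,\{p^j_t\}_j)$ with each $p^j_t$ being a homotopy of proper embeddings $\mathbb{D}^2\hookrightarrow E^j(\mathbb{D}^2\times[-1,1])$ agreeing with $e$ near the boundary, such that $p^j_0=E^j_{|\mathbb{D}^2\times\{0\}}$ and $p^j_1$ is a mini-disk in $(E^j(\mathbb{D}^2\times[-1,1]),\xi)$. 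The microfibration property is immediate because the tubes are disjoint, so homotopies in different tubes are independent.

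Next, verify that the fibers of $p$ are non-empty and contractible. Non-emptiness is Lemma \ref{Density} applied to each component individually, since each tube $E^j(\mathbb{D}^2\times[-1,1])$ lies inside the tight manifold $(Y,\xi)$ and is therefore a tight $3$-ball. Because the tubes are disjoint the fiber splits as a product $\prod_j \mathcal{F}_{E^j}$, and each factor $\mathcal{F}_{E^j}$ is precisely the fiber considered in the proof of Theorem \ref{thm:MiniDisks}, which was shown to be contractible by combining the Eliashberg--Mishachev $h$-principle (Theorem \ref{thm:EM}) with Hatcher's theorem \cite{hatchersmale}. Hence each fiber of $p$ is non-empty and contractible.

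Now Lemma \ref{microfibration} upgrades $p$ to an honest Serre fibration with contractible fibers. The family $k\mapsto E^k$ gives a map $j:K\rightarrow \mathcal{B}$, and over $G\subset K$ there is a tautological section $s(k)=(E^k,p^{j,k}_t\equiv e^k)$ of $j^*\mathcal{X}\rightarrow K$. By the contractibility of the fibers this section extends to a section $\hat{s}:K\rightarrow j^*\mathcal{X}$, and the homotopy $e^k_t:=\{p^{j,k}_t\}_j$ provides the required deformation from $e^k$ to an $n$-tuple of mini-disks. The only point requiring any care is checking that disjoint tubular neighbourhoods can be chosen continuously in $k$ (standard, by continuity of the tube radius), and otherwise the proof is a direct product-wise extension of that of Theorem \ref{thm:MiniDisks}; I do not expect any genuine obstacle beyond that bookkeeping.
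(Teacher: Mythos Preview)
Your proof is correct and follows essentially the same approach as the paper's: the paper simply remarks that the argument of Theorem \ref{thm:MiniDisks} goes through verbatim, with the microfibration now having fiber the space of isotopies of $n$ disks into $n$ disjoint tubular neighbourhoods. Your write-up just makes explicit the product splitting of the fiber and the bookkeeping for choosing disjoint tubes continuously, which is exactly the intended argument.
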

\begin{proof}
The proof follows word by word the proof of Theorem \ref{thm:MiniDisks}. In this case the microfibration built is going to have as fiber the space of isotopies of $n$ $2$-disks into $n$ disjoint tubular neighbourhoods$\cong \mathbb{D}^2\times[-1,1]$.
\end{proof}

\subsection{The space of standard spheres}

As a consequence of our previous discussion we are able to compare the homotopy types of the space of standard spheres and the space of smooth spheres in a tight contact $3$-manifold $(Y,\xi )$. For this, consider the space of smooth embeddings $\mathrm{Emb}(\sqcup_j S^2,Y)$ of $n$-disjoint spheres and the corresponding subspace of standard spheres $\mathrm{CEmb}(\sqcup_j S^2,(Y,\xi))$. Fix also an arbitrary standard embedding $e:\sqcup S^2\rightarrow (Y,\xi)$ and consider the subspaces $\mathrm{Emb}(\sqcup_j S^2,Y,\sqcup_j s_j)$ of embeddings that agree with $e$ on an open neighbourhood $\sqcup_j U_j$ of the south pole $s_j$ of each sphere. Here, we assume that the boundary $e_{|\sqcup_j \partial U_j}$ parametrizes $n$ disjoint positively transverse knots $K_j$ as in the previous section. Similarly, consider the analogous subspace of standard embeddings $\mathrm{CEmb}(\sqcup_j S^2,(Y,\xi),\sqcup_j s_j)$. Observe the following: the space $\mathrm{CEmb}(\sqcup_j S^2,(Y,\xi),\sqcup_j s_j)$ is homotopy equivalent to the space of $n$ mini-disks embeddings into the tight contact manifold with convex boundary obtained from $(Y,\xi)$ by removing an open neighbourhood of $e(\sqcup_j U_j)$ whose boundary parametrizes $K_j$. The same observation applies to the space $\mathrm{Emb}(\sqcup_j S^2,Y,\sqcup_j s_j)$. We obtain:

\begin{Theorem}\label{thm:LinksSpheres}
Assume that $(Y,\xi)$ is tight. Then 
\begin{itemize}
    \item The inclusion $\mathrm{CEmb}(\sqcup_j S^2,(Y,\xi),\sqcup_j s)\hookrightarrow\mathrm{Emb}(\sqcup_j S^2,Y,\sqcup_j s_j)$ is a homotopy equivalence.
    \item For every $k\geq 1$ the natural homomorphism
    $$ \pi_k(\mathrm{SO}(3)^n,\mathrm{U}(1)^n)\rightarrow \pi_k(\mathrm{Emb}(\sqcup_j S^2,Y),\mathrm{CEmb}(\sqcup_j S^2,(Y,\xi)))$$ induced by reparametrization on the source is an isomorphism.
\end{itemize}
\end{Theorem}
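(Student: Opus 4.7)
The first assertion follows by reducing to Theorem \ref{MiniDisksLinks}. Choose small open neighborhoods $U_j \subset S^2$ of the south poles $s_j$, and remove the $e(U_j)$ from $Y$ to obtain a tight contact 3-manifold with convex boundary $(Y', \xi')$ whose boundary components are parametrized by the positively transverse unknots $K_j$. Sphere embeddings fixed near the south poles correspond bijectively, at the level of topological spaces, to proper mini-disk embeddings in $(Y', \xi')$, both smoothly and in the contact category. Theorem \ref{MiniDisksLinks} applied to $(Y', \xi')$ then yields the homotopy equivalence.

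For the second assertion, my plan is to translate the pair $(\mathrm{Emb}, \mathrm{CEmb})$ into a comparison of frame bundles via 1-jet evaluation at the south poles. By parametric smooth isotopy extension, the evaluation $e \mapsto (e(s_j), de(s_j))_j$ defines a Serre fibration $\mathrm{Emb}(\sqcup_j S^2, Y) \to \mathrm{Fr}(Y)^n$ whose fiber is homotopy equivalent to $\mathrm{Emb}(\sqcup_j S^2, Y, \sqcup_j s_j)$. Since the tangent plane at an elliptic south pole of a standard convex sphere coincides with $\xi$, the analogous evaluation for contact embeddings lands in the bundle $\mathrm{CFr}(Y) \subset \mathrm{Fr}(Y)$ of co-oriented frames of $\xi$, and parametric contact isotopy extension gives a Serre fibration $\mathrm{CEmb}(\sqcup_j S^2, (Y,\xi)) \to \mathrm{CFr}(Y)^n$ with fiber $\mathrm{CEmb}(\sqcup_j S^2, (Y,\xi), \sqcup_j s_j)$. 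These fit into a commutative square compatible with the inclusions, and the induced map on fibers is a homotopy equivalence by the first assertion.

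Viewed as a fibration of pairs $(\mathrm{Emb}, \mathrm{CEmb}) \to (\mathrm{Fr}(Y)^n, \mathrm{CFr}(Y)^n)$ with fiber pair of vanishing relative homotopy, the long exact sequence for a fibration of pairs collapses to give
\[
\pi_k\bigl(\mathrm{Emb}(\sqcup_j S^2, Y),\, \mathrm{CEmb}(\sqcup_j S^2, (Y,\xi))\bigr) \;\cong\; \pi_k\bigl(\mathrm{Fr}(Y)^n,\, \mathrm{CFr}(Y)^n\bigr).
\]
Applying the same device to the fibration of pairs $(\mathrm{Fr}(Y)^n, \mathrm{CFr}(Y)^n) \to (Y^n, Y^n)$, whose fiber pair is $(\mathrm{SO}(3)^n, \mathrm{U}(1)^n)$ and whose base pair has vanishing relative homotopy, yields a further isomorphism with $\pi_k(\mathrm{SO}(3)^n, \mathrm{U}(1)^n)$, realized by the fiber inclusion at any chosen base point.

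It remains to check that this composite isomorphism is induced by reparametrization on the source. The composition $\mathrm{SO}(3)^n \to \mathrm{Emb} \to \mathrm{Fr}(Y)^n$ sends $r \mapsto (e(r \cdot s_j), de(r \cdot s_j) \circ dr(s_j))_j$. Since each $\mathrm{U}(1)$ is the stabilizer of $s_j$ in $\mathrm{SO}(3)$ and acts by rotations preserving the standard characteristic foliation, its image under the reparametrization composite already lies in the fiber of $\mathrm{CFr}(Y)^n$ over $(e(s_j))_j$ and agrees there with the canonical fiber inclusion. On $\mathrm{SO}(3)^n$, the reparametrization composite and the fiber inclusion are pair-homotopic: interpolate the base-point component from $e(s_j)$ to $e(r \cdot s_j)$ along any continuously chosen family of paths in $e(S^2)$, and note that this homotopy is constant on $\mathrm{U}(1)^n$ because $\mathrm{U}(1)$ stabilizes $s_j$. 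The hard step is the first assertion, which rests on the microfibration $h$-principle for mini-disks (Theorem \ref{MiniDisksLinks}); the rest is diagram chasing with Serre fibrations of pairs.
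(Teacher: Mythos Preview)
Your proof is correct and follows essentially the same approach as the paper: the first assertion by reduction to Theorem~\ref{MiniDisksLinks} after excising neighborhoods of the south poles, and the second by comparing the evaluation fibrations to frame bundles and using that the map on fibers is a homotopy equivalence. One small imprecision: the evaluation at the south poles lands in the frame bundle $\mathrm{Fr}_n(Y)$ over the ordered configuration space $\mathrm{Conf}_n(Y)$ rather than in $\mathrm{Fr}(Y)^n$ over $Y^n$ (the spheres are disjoint, so the south poles are distinct), and this is what the paper uses; however, since your argument only uses that the base pair has trivial relative homotopy and that the fiber pair is $(\mathrm{SO}(3)^n,\mathrm{U}(1)^n)$, the conclusion is unaffected.
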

\begin{proof}
As explained above, the proof of the first assertion follows from Theorem \ref{MiniDisksLinks}. For the second assertion note that there is a natural map of fibrations given by the evaluation at the $n$ south poles: 

\[
\begin{tikzcd}
  \mathrm{Emb}(\sqcup_j S^2,Y,\sqcup_j s_j) \arrow{r} & \mathrm{Emb}(\sqcup_j S^2,Y)  \arrow{r} & \mathrm{Fr}_n(Y)\\ \mathrm{CEmb}(\sqcup_j S^2,(Y,\xi),\sqcup_j s_j)  \arrow{u}\arrow{r} &
\mathrm{CEmb}(\sqcup_j S^2,(Y,\xi)) \arrow{u} \arrow{r} &  \mathrm{CFr}_n(Y,\xi) \arrow{u}
\end{tikzcd}
\]
in which the vertical maps are inclusions. Here, the base $\mathrm{Fr}_n (Y)$ is the space of framings over $n$ different points of $M$, that is, the total space of a fiber bundle over the configuration space $\mathrm{Conf}_n(Y)$ with fiber $\approx \mathrm{GL}^+(3)^n$, and likewise for $\mathrm{CFr}_n (Y,\xi)$ but with contact frames. Observe that the map between the fibers is a homotopy equivalence because of the first assertion, so that the homomorphism induced by the evaluation map
\begin{align*}
    \pi_k(\mathrm{Emb}(\sqcup_j S^2,Y),\mathrm{CEmb}(\sqcup_j S^2,(Y,\xi))) & \rightarrow  \pi_k (\mathrm{Fr}_n (Y),\mathrm{CFr}_n (Y,\xi)) \\ 
    & \cong\pi_k(\mathrm{SO}(3)^n,\mathrm{U}(1)^n)
\end{align*} 
is an isomorphism and defines an inverse to the reparametrization map. This concludes the proof. \end{proof}

\subsection{Standard spheres in sums of two irreducible $3$-manifolds}\label{embSO3}
In this section we establish Theorem \ref{embthm}. We first discuss its smooth counterpart. The relevant reference on this topic is Hatcher's work \cite{S1xS2}. Let $Y_\# = Y_- \# Y_+$ with $Y_{\pm}$ now \textit{irreducible}. Let $\mathrm{Emb} (S^2 , Y_\# )_{S_\#} \subset \mathrm{Emb}(S^2 , Y_\# )$ be the subspace of smooth co-oriented embeddings $S^2 \hookrightarrow{Y_\#}$ isotopic to a fixed given one $S_\#$, and let $$\mathcal{S} = \mathrm{Emb}(S^2 , Y_\# )_{S_\#} /\mathrm{Diff}(S^2)$$ be the space of \textit{unparametrised} co-oriented non-trivial spheres. Hatcher \cite{S1xS2} proved that $\mathcal{S}$ is contractible. We also have a fibration 
\begin{align*}
\mathrm{SO}(3) \simeq \mathrm{Diff}(S^2 ) \rightarrow \mathrm{Emb} (S^2 , Y_\# )_{S_\#} \rightarrow \mathcal{S}
\end{align*}
and hence $$\mathrm{Emb} (S^2 , Y_\# )_{S_\#} \simeq \mathrm{SO}(3).$$ 

\begin{proof}[Proof of Theorem \ref{embthm}] Immediate from the long exact sequence of pairs associated to the horizontal maps in the following commutative diagram 
\[
\begin{tikzcd}
\mathrm{CEmb}\big(S^2 , (Y_\#, \xi_\# ) \big)_{S_\#} \arrow{r} & \mathrm{Emb}\big(S^2 , Y_\# \big)_{S_\#}\\
\mathrm{U}(1) \arrow{r} \arrow{u} & \mathrm{SO}(3) \arrow{u}{\simeq}
\end{tikzcd}
\]
combined with Theorem \ref{thm:LinksSpheres}.
\end{proof}

\section{Families of contact structures on sums of contact $3$-manifolds} \label{connectedsumsection}

In this section we establish the main results of the article, Theorems \ref{mainthm}, \ref{mainthm2} and \ref{sumcontact2} by combining the tools discussed in \S \ref{monopolesection} and \S \ref{spheressection}.

\subsection{The space of tight contact structures on a sum}

Consider $n+1$ tight contact $3$-manifolds $(Y_j , \xi_j )$, $j = 0 , \ldots , n$ with $n\geq 1$. Let $(Y_\#, \xi_\# )$ be their connected sum, which we build as follows. We fix Darboux balls $B_{0-} \subset Y_0$, $B_{n+} \subset Y_n$ and for each $0 < j < n$ we fix two disjoint Darboux balls $B_{j\pm} \subset Y_j$. Then the connected sum $(Y_\#,\xi_\#)$ is formed by gluing in the following order
$$ \big( Y_0 \setminus B_{0-} \big) \bigcup_{\partial B_{0-} = -\partial B_{1+}} \big( Y_1 \setminus ( B_{1+} \cup B_{1-} ) \big) \cdots 
\bigcup_{\partial B_{(n-1)-} =- \partial B_{n+}} \big( Y_n \setminus B_{n+} \big). $$
We will denote by $e_j:S^2\hookrightarrow (Y_\#,\xi_\#)$, $j = 1,\ldots,n$, the embedding of the $j^{\mathrm{th}}$ separating standard sphere given by $\partial B_{(j-1)-}=-\partial B_{j+}$ in the connected sum $(Y_\#,\xi_\#)$. Denote by $s_j$ the south pole on the $j^{\mathrm{th}}$ sphere, regarded as a point in $e_j (S^2 ) \subset Y_\#$.

We will denote by $\mathrm{Tight}(Y,B)$ the space of tight contact structures on $Y$ that are fixed on a Darboux ball $B$ and by $\mathrm{Tight}(Y,B,B^{\prime})$ the subspace of $\mathrm{Tight}(Y,B)$ given by contact structures that are fixed on a second Darboux ball $B^\prime$ disjoint from $B$. A classical result of Colin \cite{colin} asserts that the contact manifold $(Y_\#,\xi_\#)$ is tight, and we have a well-defined map
\begin{equation}\label{eq:ConnectedSum}
\#_{n+1}: \mathrm{Tight}(Y_0,B_{0-})\times \prod_{j=1}^{n-1} \mathrm{Tight}(Y_j,B_{j+},B_{j-})\times \mathrm{Tight}(Y_n,B_{n+})\rightarrow \mathrm{Tight}(Y_\#).
\end{equation}

On the other hand, the evaluation map of each tight contact structure on $Y$ at the south poles $s_j$ defines a fibration 
\begin{equation}\label{eq:GeneralEvaluation}
 ev_{n+1}:\mathrm{Tight}(Y_\#)\rightarrow (S^2)^{n}.
\end{equation}

The fiber $\mathcal{F}$ of $ev_{n+1}$ over $(\xi_\# (s_j ) )$ has the homotopy type of the space of tight contact structures on $Y_\#$ that agree with $\xi_\#$ over $n$ disjoint Darboux balls $B_{\#j}$ around $s_j$. Therefore, there is a natural inclusion 
$$ i_\#:\mathrm{Tight}(Y_0,B_{0-})\times \prod_{j=1}^{n-1} \mathrm{Tight}(Y_j,B_{j+},B_{j-})\times \mathrm{Tight}(Y_n,B_{n+})\hookrightarrow \mathcal{F}. $$
We establish the following stronger version of Theorem \ref{sumcontact2}:

\begin{Theorem}\label{thm:ConnectedSumTight}
The inclusion $i_\#$ is a homotopy equivalence.
\end{Theorem}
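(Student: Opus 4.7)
The plan is to introduce an auxiliary ``enriched'' space $\mathcal{E}$ of contact structures in $\mathcal{F}$ together with a compatible system of standard convex separating spheres, and produce a zig-zag of homotopy equivalences $\mathcal{F} \xleftarrow{\pi} \mathcal{E} \xrightarrow{c} \prod_j \mathrm{Tight}(Y_j, B_{j\pm})$ fitting into a homotopy commutative triangle with $i_\#$. This runs in parallel with the smooth theory: cutting along separating spheres decomposes the sum into its pieces, but it is the parametric $h$-principle of \S \ref{spheressection} that makes this work in the contact setting.

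First, I would fix small Darboux sub-balls $B_{\#j} \subset Y_\#$ around each south pole $s_j$, chosen so that every $\xi' \in \mathcal{F}$ agrees with $\xi_\#$ on $\bigcup_j B_{\#j}$, and denote by $D_j := e_j(S^2) \cap B_{\#j}$ the mini-disk that $e_j$ cuts out near $s_j$. Then define
\begin{equation*}
\mathcal{E} := \bigl\{(\xi', S'_1, \ldots, S'_n) : \xi' \in \mathcal{F},\ S'_j \cap B_{\#j} = D_j,\ \text{the } S'_j \text{ are disjoint},\ [S'_j]=[e_j],\ S'_j \text{ standard convex for } \xi'\bigr\}.
\end{equation*}
The forgetful map $\pi : \mathcal{E} \to \mathcal{F}$ is a Serre fibration by parametric contact isotopy extension. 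Its fibre $\mathcal{S}(\xi')$ over $\xi'$ is the space of standard convex sphere systems in $(Y_\#, \xi')$ with the prescribed mini-disk behaviour near each $s_j$. The inclusion of $\mathcal{S}(\xi')$ into the corresponding space of \emph{smooth} sphere systems fixed near the south poles is a homotopy equivalence by a mini-disk enhancement of Theorems \ref{MiniDisksLinks}--\ref{thm:LinksSpheres}. The latter smooth space is contractible: once the mini-disks $D_j$ are pinned down, the complementary hemispheres are determined up to contractible choice by a parametric version of Hatcher's sphere-complex results applied to the decomposition $Y_\# = Y_0 \# \cdots \# Y_n$. Hence $\pi$ is a weak equivalence.

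Next I would show that cutting along the sphere system yields a homotopy equivalence
\begin{equation*}
c : \mathcal{E} \longrightarrow \mathrm{Tight}(Y_0, B_{0-}) \times \prod_{j=1}^{n-1} \mathrm{Tight}(Y_j, B_{j+}, B_{j-}) \times \mathrm{Tight}(Y_n, B_{n+}).
\end{equation*}
Given $(\xi', \Sigma) \in \mathcal{E}$, cutting $Y_\#$ along $\Sigma$ canonically identifies each piece with the corresponding $Y_j$ with its Darboux balls removed, and the induced contact structure is tight by Colin's theorem \cite{colin}. A homotopy inverse is furnished by the connected-sum construction of Definition \ref{connectedsum}, which simultaneously produces a tight contact structure on $Y_\#$ and canonical standard convex separating spheres in the neck regions. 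By construction $\pi \circ c^{-1}$ agrees with $i_\#$ up to canonical homotopy, which completes the proof.

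The main obstacle I anticipate is establishing the parametric contractibility of $\mathcal{S}(\xi')$. The standard-convex half rests squarely on Theorem \ref{thm:LinksSpheres}, but the smooth half requires a parametric rigidity statement for essential $2$-sphere systems in a (possibly reducible) tight $3$-manifold with mini-disks pinned down, which has to be extracted from Hatcher's work on sphere complexes and on diffeomorphism groups of reducible $3$-manifolds. A secondary technical point is verifying that the cutting/gluing correspondence induces weak equivalences on the \emph{full} spaces of contact structures rather than just on $\pi_0$; this uses a parametric version of the uniqueness of contact tubular neighbourhoods of standard convex spheres, together with the fact that the contact gluing data on the neck is contractible once the mini-disks are prescribed.
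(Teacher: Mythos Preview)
Your zig-zag strategy is sound in spirit but differs from the paper's argument and carries a genuine gap. The paper does \emph{not} need the fibre $\mathcal{S}(\xi')$ to be contractible, nor any absolute control on the smooth sphere space. Its key observation is that $\xi' \in \mathrm{Im}(i_\#)$ \emph{if and only if the fixed reference embeddings} $e_j$ are already standard convex for $\xi'$. So instead of varying the spheres, the paper varies the contact structures: given a $K$-family $\xi^k \in \mathcal{F}$, one builds the space $\mathcal{X}$ of pairs $(\xi, e_t)$ where $e_t$ is a smooth isotopy from the fixed $e$ to some standard sphere for $\xi$. The fibre of $\mathcal{X}\to\mathcal{F}$ over $\xi$ is the homotopy fibre of $\mathcal{CE}_\xi \hookrightarrow \mathcal{E}$ at the basepoint $e$, which is contractible by Theorem \ref{thm:LinksSpheres} alone. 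One then extends the trivial section over $G$ to all of $K$, applies smooth isotopy extension to the resulting isotopies $e^k_t$, and pulls back the contact structures so that the fixed $e$ becomes standard. No Hatcher-type input is ever used.

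By contrast, your argument genuinely requires the contractibility of the space of smooth sphere systems in $Y_\#$ (with mini-disks pinned), and this is where it breaks. Theorem \ref{thm:ConnectedSumTight} is stated for \emph{arbitrary} tight summands $(Y_j,\xi_j)$, with no irreducibility hypothesis. Hatcher's results quoted in \S\ref{embSO3} give the needed contractibility only in the case of irreducible summands; for general (possibly reducible) $Y_j$ the statement you invoke is not available in the paper and is a non-trivial input from $3$-manifold topology. The same issue recurs in your map $c$: cutting along $\Sigma$ only ``canonically identifies'' the pieces with the $Y_j \setminus B_{j\pm}$ after choosing an ambient isotopy from $\Sigma$ to the reference $(e_j)$, and the space of such isotopies is contractible precisely when the smooth sphere space is. So the obstacle you anticipate is real and not resolved by your outline. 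The paper's route sidesteps it entirely by using only the \emph{relative} statement $\mathcal{CE}_\xi \simeq \mathcal{E}$.
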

\begin{Remark}
    Since $S^2$ is simply connected we deduce from the long exact sequence in homotopy groups of (\ref{eq:GeneralEvaluation}) that 
    $$ \pi_0 \big( \mathrm{Tight}(Y_\#) \big) \cong \prod_{j=0}^{n} \pi_0\big( \mathrm{Tight}(Y_j)\big) $$
    which is the classical result of Colin \cite{colin}.  
    
\end{Remark}
\begin{proof}
Let $K$ be a compact parameter space and $G\subseteq K$ a subspace. It is enough to prove that: if $\xi^k\in \mathcal{F}$ is a $K$-family of tight contact structures on $Y_\#$ that coincide with $\xi_\#$ over the $n$ Darboux balls $B_{\#j }$ and such that $\xi^k\in \mathrm{Im} ( i_\# )$ for $k\in G$, then there exists a homotopy of tight contact structures $\xi^k_t$, $t\in[0,1]$, such that 
\begin{itemize}
    \item $\xi^k_0=\xi^k$,
    \item $\xi^k_t=\xi^k$ for $k\in G$ and 
    \item $\xi^k_1\in \mathrm{Im}(i_\# ).$
\end{itemize}
The key point is to observe that $\xi^k\in \mathrm{Im}(i_\# )$ if and only if the embeddings $e_j:S^2\hookrightarrow (Y_\#,\xi^k)$ are standard for $j = 1 , \ldots, n $. For a given tight contact structure $\xi$ denote by $$\mathcal{CE}_{\xi} := \mathrm{CEmb}(\sqcup_{j=1}^{n} S^2,(Y_\#,\xi),\sqcup_{j=1}^{n} s_j))$$ the space of standard embeddings of $n$ disjoint spheres that coincide with $(e_j )$ over a neighbourhood of the south poles $(s_j )$, and by $$\mathcal{E} := \mathrm{Emb}(\sqcup_{j=1}^{n} S^2,Y_\#,\sqcup_{j=1}^{n} s_j))$$ the analogous space of smooth embeddings. Consider the space $\mathcal{X}$ of pairs $(\xi,e_t)$ where $\xi\in\mathcal{F}$ and $e_t\in\mathcal{E}$, with $t\in[0,1]$, is a homotopy of embeddings with $e_0=e$ and $e_1\in\mathcal{CE}_\xi$. There is a natural forgetful map 
$$ p:\mathcal{X}\rightarrow \mathcal{F}, (\xi,e_t)\mapsto \xi, $$
which is in fact a fibration because of Lemma \ref{lem:Gray}. By Theorem \ref{thm:LinksSpheres} we know that the inclusion $ \mathcal{CE}_{\xi} \rightarrow \mathcal{E}$ is a homotopy equivalence. Therefore, the fibers of the previous fibration are contractible.

This is enough to conclude the proof. Indeed, our initial family $\xi^k$ is given by a map $j:K\rightarrow \mathcal{F}$ and the pullback fibration $j^*\mathcal{X}\rightarrow K$ has a well-defined section over $G\subseteq K$ given by the constant isotopy $e^{k}_{t}=e$, $(k,t)\in G\times [0,1]$. Since the fiber of this fibration is contractible we can extend this section over $K$ obtaining a section $e^{k}_{t}$, $(k,t)\in K\times [0,1]$. Then we apply the smooth isotopy extension theorem to this family of embeddings to find an isotopy $ \varphi^{k}_{t}\in \mathrm{Diff}(Y_\#)$, $(k,t)\in K\times [0,1]$, such that 
\begin{itemize}
    \item $\varphi^{k}_{0}=\mathrm{Id}$,
    \item $\varphi^{k}_{t}$ is the identity over a neighbourhood of the south poles $(s_j )$,
    \item $\varphi^{k}_{t}\circ e=e^{k}_{t}$, 
    \item $\varphi^{k}_{t}=\mathrm{Id}$ for $(k,t)\in G\times [0,1]$. 
\end{itemize}
The homotopy of contact structures $\xi^k_t=(\varphi^{k}_{t})^*\xi^k$ solves the problem since now $e=(\varphi^{k}_{1})^{-1}\circ e^{k}_{1}$ is standard for $(\varphi^{k}_{t})^* \xi^k$ because $e^{k}_{1}$ is standard for $\xi^k$.
\end{proof}

\subsection{Diffeomorphisms of connected sums of two irreducible $3$-manifolds} 

Consider $Y_\# = Y_- \# Y_+$ with $Y_{\pm}$ \textit{irreducible}. Recall from \S \ref{embSO3} that Hatcher \cite{S1xS2} proved $$\mathrm{Emb} (S^2 , Y_\# )_{S_\#} \simeq \mathrm{SO}(3).$$ This has the following useful consequence:

\begin{Lemma}\label{pi1diff} Suppose that $Y_{\pm}$ are aspherical (i.e. irreducible and with infinite fundamental group). Then $\pi_1 \mathrm{Diff} (Y_\# ) = 0.$
\end{Lemma}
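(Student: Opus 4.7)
The plan is to use the fibration (\ref{diffsphere}) applied to $S_\# \subset Y_\#$:
$$\mathrm{Diff}(Y_\#, S_\#) \to \mathrm{Diff}(Y_\#) \to \mathrm{Emb}(S^2, Y_\#)_{S_\#}.$$
Hatcher's theorem (recalled just before the statement of the lemma) identifies the base with $\mathrm{SO}(3)$ up to homotopy, so the relevant portion of the long exact sequence becomes
$$\pi_2\mathrm{SO}(3) \to \pi_1 \mathrm{Diff}(Y_\#, S_\#) \to \pi_1 \mathrm{Diff}(Y_\#) \to \pi_1 \mathrm{SO}(3) \xrightarrow{\delta} \pi_0 \mathrm{Diff}(Y_\#, S_\#).$$
Since $\pi_2\mathrm{SO}(3) = 0$ and $\pi_1 \mathrm{SO}(3) = \mathbb{Z}/2$, it is enough to check (i) that $\pi_1 \mathrm{Diff}(Y_\#, S_\#) = 0$, and (ii) that the connecting map $\delta$ is injective.

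For (i), my approach is to show $\mathrm{Diff}(Y_\#, S_\#) \simeq \mathrm{Diff}(Y_-, B_-) \times \mathrm{Diff}(Y_+, B_+)$. A standard tubular neighborhood argument---using the contractibility of the space of bicollars of $S_\#$ which restrict to the inclusion on $S_\#$ and respect a chosen co-orientation---reduces this to the analogous statement for diffeomorphisms fixing an entire bicollar pointwise. Such diffeomorphisms split as a product by cutting along $S_\#$ and identifying each half $Y_\pm \setminus \mathrm{int}(B_\pm)$ with the complement of a Darboux ball in $Y_\pm$. The proof of Corollary \ref{criterionB}, whose vanishing assertion only uses that the underlying manifold is aspherical, already gives $\pi_1 \mathrm{Diff}(Y_\pm, B_\pm) = 0$, hence the claim.

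For (ii), by the smooth analogue of the local computation in Lemma \ref{lem:ReparametrizationLoop}, the generator of $\pi_1\mathrm{SO}(3) = \pi_1\mathrm{Emb}(S^2, Y_\#)_{S_\#}$ (the reparametrisation loop) maps under $\delta$ to the smooth Dehn twist $\tau_{S_\#}$, viewed as a diffeomorphism fixing $S_\#$ pointwise (e.g.\ supported in a bicollar minus a smaller bicollar). By McCullough's theorem mentioned in the introduction, when $Y_\pm$ are aspherical this Dehn twist generates a non-trivial $\mathbb{Z}/2$ inside the kernel of $\pi_0\mathrm{Diff}(Y_\#) \to \mathrm{Out}(\pi_1 Y_\#)$; a fortiori $\tau_{S_\#}$ is non-trivial in $\pi_0 \mathrm{Diff}(Y_\#)$, and hence also in $\pi_0 \mathrm{Diff}(Y_\#, S_\#)$, so $\delta$ is injective. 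Combining (i) and (ii) yields $\pi_1 \mathrm{Diff}(Y_\#) = 0$.

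The step I expect to demand the most care is the identification of $\delta(1)$ with the class represented in McCullough's theorem, and parenthetically the justification of the product splitting in (i); both are standard cut-and-paste manipulations, but they must be set up cleanly so that the ``Dehn twist produced by the rotation loop'' is genuinely the same element as the one whose non-triviality McCullough establishes. Once these identifications are in place the rest of the argument is purely formal.
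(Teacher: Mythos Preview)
Your overall strategy---the fibration over $\mathrm{Emb}(S^2,Y_\#)_{S_\#}\simeq\mathrm{SO}(3)$, the product splitting $\mathrm{Diff}(Y_\#,S_\#)\simeq\mathrm{Diff}(Y_-,B_-)\times\mathrm{Diff}(Y_+,B_+)$, and the vanishing of $\pi_1\mathrm{Diff}(Y_\pm,B_\pm)$ from the proof of Corollary~\ref{criterionB}---is exactly the paper's approach, and step~(i) is fine.

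The problem is step~(ii). The smooth analogue of Lemma~\ref{lem:ReparametrizationLoop} does \emph{not} send the generator of $\pi_1\mathrm{SO}(3)$ to a single Dehn twist $\tau_{S_\#}$; it sends it to the product $\tau_{S_-}^{-1}\tau_{S_+}$ on two parallel copies, which under your product splitting is $(\tau_{\partial B_-},\tau_{\partial B_+})\in\pi_0\mathrm{Diff}(Y_-,B_-)\times\pi_0\mathrm{Diff}(Y_+,B_+)$. (A Dehn twist on $S_\#$ itself does not fix $S_\#$ pointwise, and any model ``supported in a bicollar minus a smaller bicollar'' necessarily involves twists on both sides.) More fatally, whatever $\delta(1)$ is, by exactness of the long exact sequence it lies in the kernel of $\pi_0\mathrm{Diff}(Y_\#,S_\#)\to\pi_0\mathrm{Diff}(Y_\#)$, so its image in $\pi_0\mathrm{Diff}(Y_\#)$ is \emph{trivial}. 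Hence you cannot detect it via McCullough's theorem, which concerns non-triviality in $\pi_0\mathrm{Diff}(Y_\#)$. The paper's fix is simply to observe that under the product splitting each factor $\tau_{\partial B_\pm}$ is non-trivial in $\pi_0\mathrm{Diff}(Y_\pm,B_\pm)$---this was also established in the proof of Corollary~\ref{criterionB}---and hence $\delta$ is injective.
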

\begin{proof}[Proof of Lemma \ref{pi1diff}]

From the fibration (\ref{diffsphere}) we have an exact sequence

\begin{tikzpicture}[baseline= (a).base]
\node[scale=1 , trim left=-6cm] (a) at (0,0){
\begin{tikzcd}
      \pi_1 \mathrm{Diff}(Y_- , B_- ) \times \pi_1 \mathrm{Diff}(Y_+ , B_+ ) \arrow{r} &  \pi_1 \mathrm{Diff}(Y_\# ) \arrow{r} &  \mathbb{Z}_2 \arrow[out=0, in=180]{dll} \\
      \pi_0 \mathrm{Diff}(Y_- , B_- ) \times \pi_0 \mathrm{Diff}(Y_+ , B_+ )  &  & 
\end{tikzcd}
};
\end{tikzpicture}
Under the connecting map, the non-trivial element in $ \mathbb{Z}/2$ maps to $\tau_{\partial B_-} \tau_{\partial B_+ } \in \pi_0 \mathrm{Diff} ( Y_- , B_- ) \times \pi_0 \mathrm{Diff}(Y_+ , B_+ )$. We saw in the proof of Corollary \ref{criterionB} that the Dehn twists $\tau_{\partial B_\pm} \in \pi_0 \mathrm{Diff}(Y_\pm , B_\pm )$ are non-trivial and $\pi_1 \mathrm{Diff}(Y_\pm , B_\pm ) = 0$. From this and the exact sequence above it now follows that $\pi_1 \mathrm{Diff}(Y_\# ) = 0$. \end{proof}

\subsection{Proof of Theorem \ref{mainthm}} As we've been doing so far, all homologies considered below are taken with $\mathbb{Q}$ coefficients, unless otherwise noted.

By Theorem \ref{thm:ConnectedSumTight} we have $$ \mathcal{C}(Y_\# , \xi_\# , B_\# ) \simeq \mathcal{C}(Y_- , \xi_- , B_- ) \times \mathcal{C}(Y_+ , \xi_+ , B_+ )$$ and then by Proposition \ref{obstrsum} the obstruction class $\mathcal{O}_{\xi_\#} \in \pi_1 \mathcal{C}(Y_\# , \xi_\# , B_\# )$ to finding a homotopy section of $ev_{\#} : \mathcal{C}(Y_\# , \xi_\# ) \rightarrow S^2$ corresponds to $$\mathcal{O}_{\xi_\#} \cong ( \mathcal{O}_{\xi_-}  , \mathcal{O}_{\xi_+ } ) \in \pi_1 \mathcal{C}(Y_- , \xi_- , B_- ) \times \pi_1 \mathcal{C}(Y_+ , \xi_+ , B_+ ) .$$
We recall that all homologies are taken with $\mathbb{Q}$ coefficients. A portion of the Wang long exact sequence for the fibration $ev_{B_\#}$ is 
\begin{align*}
     \mathbb{Q} \xrightarrow{\delta} \mathrm{H}_1 \big(  \mathcal{C}(Y_- , \xi_- , B_- )\big) \oplus \mathrm{H}_1 \big( \mathcal{C}(Y_+ , \xi_+ , B_+ ) \big) \rightarrow \mathrm{H}_1 \big( \mathcal{C}(Y_\# , \xi_\# ) \big) \rightarrow 0
\end{align*}
where $\delta (1 ) = \mathcal{O}_{\xi_\#} = ( \mathcal{O}_{\xi_-}  , \mathcal{O}_{\xi_+ } )$. In the latter formula, and in what follows, we will incur in a small abuse of notation by denoting the obstruction class and its image in homology by the same symbol.

The non-trivial input from Floer theory appears now. Because $\mathbf{c}(\xi_\pm ) \notin \mathrm{Im}U$ then by the Corollary \ref{cor:NoLagrangianRot} to Theorem \ref{commdiag} the classes $\mathcal{O}_{\xi_{\pm}}$ are non-trivial in $\mathrm{H}_1 \big( \mathcal{C}(Y_\pm , \xi , B_\pm) \big)$. It then follows from the Wang exact sequence that the class $(\mathcal{O}_{\xi_-} , 0 ) $ is not in the image of $\delta$, thus the image of $(\mathcal{O}_{\xi_-} , 0 )$ in $\mathrm{H}_1 \big(  \mathcal{C}(Y_\# , \xi_\# ) \big) $ is non-trivial.

Now, from Lemma \ref{pi1diff} we have $\pi_1 \mathrm{Diff}(Y_\# ) = 0$. Note that the hypothesis of that Lemma indeed apply, because the manifolds $Y_\pm $ are aspherical. To see this, recall that an irreducible $3$-manifold is aspherical precisely when it is not one of the quotients $M_\Gamma$ of $S^3$ by a finite subgroup $\Gamma$ of $SO(4)$. The manifolds $M_\Gamma $ have $\widecheck{\mathrm{HM}} ( - M_\Gamma , \mathfrak{s} ) = \mathbb{Q}[U, U^{-1}]$ in every spin-c structure $\mathfrak{s}$ because $M_\Gamma$ is a rational homology sphere with a positive scalar curvature metric (see Proposition 36.1.3 in \cite{KM}). In particular, the $U$ map is surjective on $\widecheck{\mathrm{HM}}(- M_\Gamma , \mathfrak{s} )$. The manifolds $Y_\pm$ are irreducible but can't be of the form $M_\Gamma$ since $\mathbf{c}(\xi_\pm) \notin \mathrm{Im}U$.

Then, by the long exact sequence in homotopy groups of (\ref{moser}) it follows that 
$$\mathrm{H}_1 \big( \mathcal{C}(Y_\#, \xi_\# ) ; \mathbb{Z} \big) \cong \mathrm{Ab}\Big(  \pi_0 \mathrm{Cont}_0 (Y_\#, \xi_\# ) \Big) .$$ Under this isomorphism, the non-trivial class $(\mathcal{O}_{\xi_-} , 0)$ corresponds to the class of the squared Dehn twist $\tau_{S_\#}^2$ by Proposition \ref{moserdehn}. This proves that $\tau_{S_\#}^2$ is not contact isotopic to the identity. Since we've shown that $(\mathcal{O}_{\xi_-} , 0 )$ is non-trivial \textit{rationally}, it follows that all the even powers of $\tau_{S_\#}$ (and therefore all the powers) are also not contact isotopic to the identity. This completes the proof of Theorem \ref{mainthm}(A).

We now establish Theorem \ref{mainthm}(B). By Lemma \ref{ftrivialtwist} we have that the image of $\tau_{\partial B_\pm }$ in $ \pi_0 \mathrm{FCont}_0 (Y_\pm, \xi_\pm , B_\pm )$ is trivial. Hence, so is the image of $\tau_{S_\#}^2$ in $\pi_0 \mathrm{FCont}_0 (Y_\# , \xi_\# )$. The proof of Theorem \ref{mainthm} is now complete. $\square$

\begin{Remark}
    Working with $\mathbb{Z}$ coefficients rather than $\mathbb{Q}$, we can establish the following analogue of Theorem \ref{mainthm}(A) by the same argument. If $2 \mathbf{c}(\xi_\pm ; \mathbb{Z} ) \neq 0$ in $\widecheck{\mathrm{HM}} (-Y_\pm )$ and $0 \neq k \in \mathbb{Z}$ satisfies $k  \mathbf{c}(\xi_\pm ; \mathbb{Z} ) \notin \mathrm{Im} U$, then the $2k$-fold iterate $\tau_{S_\#}^{2k}$ is not contact isotopic to the identity. All examples known to the authors where the latter hypothesis is satisfied for some $k \neq 0$ also satisfy the stronger $\mathbb{Q}$ version of the hypothesis. The assumption $2 \mathbf{c}(\xi_\pm ; \mathbb{Z} ) \neq 0$ guarantees the orientability of the moduli spaces involved in the construction of the families contact invariant (see Corollary 1.8 in \cite{yo}).
    \end{Remark}

\subsection{Proof of Theorem \ref{mainthm2}}

We write $(Y, \xi ) = (Y_0 , \xi_0 ) \# \cdots \# (Y_n , \xi_n ) \# (Y_{n+1}, \xi_{n+1} )$ where $(Y_0 , \xi_1 ), \ldots , (Y_n , \xi_n )$ are those prime summands of $(Y, \xi )$ such that $\mathbf{c}(\xi_j ) \notin \mathrm{Im}U$ and the Euler class of $\xi_j$ vanishes, and $(Y_{n+1}, \xi_{n+1} )$ is the sum of the remaining prime summands. We take the latter to be $(S^3 , \xi_{\mathrm{st}} )$ if there are no prime summands remaining. We choose Darboux balls $B_{0-}\subset Y_0$, $B_{n+1,+} \subset Y_{n+1}$ and for $j = 1, \ldots , n$ we choose two Darboux balls $B_{j\pm} \subset Y_j$ disjoint from each other. We may take the connected sum $(Y, \xi )$ to be built by gluing in the following order
$$  \big( Y_0 \setminus B_{0-}  \big) \bigcup_{\partial B_{0-} = - \partial B_{1+}}  \big( Y_1 \setminus (B_{1+} \cup B_{1-} ) \big) \cdots  \big( Y_n \setminus (B_{n+} \cup B_{n-} ) \big) \bigcup_{\partial B_{n-} = - \partial B_{n+1,+}} \big( Y_{n+1} \setminus B_{n+1,+} \big)$$
with $n+1$ separating spheres. We fix $n+1$ Darboux balls $B_{\#j}$, $j = 1, \ldots , n+1$, centered at the south poles of the separating spheres (i.e. $B_{\#j}$ is centered at the south pole of the sphere which separates the pieces $Y_{j-1} \setminus B_{j-1,-}$ and $Y_{j+} \setminus B_{j+}$) and which are disjoint from each other.

Consider the evaluation map at the $n+1$ south poles of the spheres, which provides a fibration
\begin{align}
\mathcal{F} \rightarrow \mathcal{C}(Y, \xi ) \rightarrow (S^2 )^{n+1} . \label{evmanypoints}
\end{align}
Theorem \ref{thm:ConnectedSumTight} identifies the fiber as
\begin{align*}
    \mathcal{F} \simeq \mathcal{C}(Y_0 , B_{0-} ) \times \Big( \prod_{j = 1, \ldots , n+1} \mathcal{C}(Y_j , B_{j+} \cup B_{j-} ) \Big) \times \mathcal{C}(Y_{n+1} , B_{n+1,+} ).
\end{align*}
Recall that we have a homotopy equivalence  
$$ 
\mathcal{C}(Y_j , B_{j+} \cup B_{j-} ) \simeq  \Omega S^2  \times \mathcal{C} (Y_j , B_{j-} ) \, ,
$$
since the evaluation map $ev_{B_{j+}} : \mathcal{C}(Y_j , B_{j-} ) \rightarrow S^2 $ is null-homotopic (a null-homotopy is obtained by dragging the evaluation point from $B_{j+}$ into $B_{j-}$, and this yields the required homotopy equivalence). Thus, we have
\[
\mathcal{F} \simeq \mathcal{C}(Y_0 , B_{0-}) \times \Omega S^2 \times \mathcal{C} (Y_1 , B_{1-} ) \times \cdots \times \Omega S^2 \times \mathcal{C}(Y_n , B_{n-} ) \times \mathcal{C}(Y_{n+1} , B_{n+1,+}).
\]

The connecting map in the long exact sequence in homotopy groups of the fibration (\ref{evmanypoints}) yields a homomorphism
\[
\delta : \mathbb{Z}^{n+1} \rightarrow \pi_1 \mathcal{C}(Y_0 , B_{0-}) \times \mathbb{Z} \times \pi_1 \mathcal{C} (Y_1 , B_{1-} ) \times \cdots \times \mathbb{Z} \times \pi_1 \mathcal{C}(Y_n , B_{n-} ) \times \pi_1 \mathcal{C}(Y_{n+1} , B_{n+1,+}). 
\]
which we now calculate. 

\begin{Lemma}\label{Lemma:obstrsum2} For $(a_1 , \ldots , a_{n+1} ) \in \mathbb{Z}^{n+1}$ we have
\[\delta (a_1 , \ldots , a_{n+1} ) = \big( \, a_1 \cdot \mathcal{O}_{\xi_0} \, , \, a_1 \, , a_2 \cdot \mathcal{O}_{\xi_1} \, , \cdots , \, a_n \, , a_{n+1} \cdot \mathcal{O}_{\xi_n} \, , \, a_{n+1} \cdot \mathcal{O}_{\xi_{n+1}} \, \big) \, .\]
\end{Lemma}

\begin{proof}
The argument we use is modelled on the proof of Proposition \ref{obstrsum}. It suffices to work in the local model where $(Y_j , \xi_j ) = (\mathbb{B}^3 , \xi_\mathrm{st} )$ for all $j = 0 , \ldots , n+1$. We choose $2n+2$ paths $\gamma_{0-}$, $\gamma_{1\pm}$, $\ldots \gamma_{n \pm}$, $\gamma_{n+1,+}$ in $Y$, where each $\gamma_{j+}$ goes from $B_{\# j}$ to $\partial Y_j \subset \partial Y$, and each $\gamma_{j-}$ goes from $B_{\# j+1}$ to $\partial Y_j \subset \partial Y$. We consider the following commutative diagram of maps and spaces
\[
\begin{tikzcd}
\mathcal{C}(Y , \cup_{j = 1}^{n+1} B_{\# j} ) \arrow{r} \arrow{d}{ev_{\gamma } } &  \mathcal{C}(Y  ) \arrow{d}{ev_\gamma} \arrow{r}{ev_{B_\#}}  & (S^2)^{n+1} \arrow{d}{ \Delta^{n+1}  } \\ 
(\Omega S^2)^{2n+2}    \arrow{r} & (PS^2)^{2n+2} \arrow{r} & (S^2 )^{2n+2} \, .
\end{tikzcd}
\]
Here $ev_{\#B}$ stands for the evaluation map at the centers of the $n+1$ balls $B_{\#j}$, and the bottom row is given by $2n+2$ product of the path fibration on $S^2$. In particular, both rows are fibration sequences. The maps denoted $ev_\gamma$ stand for evaluation of contact structures along the $2n+2$ paths chosen above, and $\Delta : S^2 \rightarrow (S^2)^2$ is the diagonal map.

Each of the two fibrations $ev_{B_{0-}} : \mathcal{C}(Y_0 ) \rightarrow S^2$, $ev_{B_{n+1,+}} : \mathcal{C}(Y_{n+1} ) \rightarrow S^2$ is identified with the path fibration on $S^2$, by Lemma \ref{Lemma:Olocal}. Similarly, each of the $n$ fibrations $ev_{B_{j+}} \times ev_{B_{j-}} : \mathcal{C}(Y_j ) \rightarrow (S^2 )^2$ with $j = 1, \ldots , n$ is identified with two copies of the path fibration on $S^2$. Using these, we identify the bottom row of the first diagram with the product of these $n+2$ fibrations.

The left-most vertical map in the first diagram is a homotopy equivalence, which follows by an argument similar to the proof of Lemma \ref{Lemma:Olocal}. Consider the inclusion map 
\[
j : \mathcal{C}(Y_0 , B_{0-} ) \times \big( \prod_{j = 1}^{n} \mathcal{C}(Y_j , B_{j+} \cup B_{j-} ) \big) \times \mathcal{C}(Y_{n+1} , B_{n+1,+} ) \rightarrow \mathcal{C}(Y , \cup_{j = 1}^{n+1} B_{\# j} ).
\]
Under the identification of the bottom row of the first diagram with the product of the $n+2$ fibrations from the previous paragraph, the map $j$ becomes the homotopy inverse of the left-most vertical map in the first diagram, as in the proof of Lemma \ref{obstrsum}. The required result follows now from the commuting square obtained from taking homotopy groups in the first diagram:
\[
\begin{tikzcd}
(\pi_2 S^2 )^{n+1} \arrow{r} \arrow{d}{\Delta^{n+1}} & \pi_1 \mathcal{C}(Y, \bigcup_{j = 1}^{n+1} B_{\# j} ) \\
(\pi_2 S^2 )^{2n+2} \arrow{r}  & \pi_1 \mathcal{C}(Y_0 , B_{0-} ) \times \big( \prod_{j = 1}^{n} \pi_1 \mathcal{C}(Y_j , B_{j+} \cup B_{j-} ) \big) \times \pi_1 \mathcal{C}(Y_{n+1} , B_{n+1,+} ) \arrow{u}{j}.
 \end{tikzcd}
\]
\end{proof}

With this in place, we now look at the Serre spectral sequence of the fibration (\ref{evmanypoints}). From it we can assemble an exact sequence
\begin{align*}
    \mathbb{Q}^{n+1} \xrightarrow{\delta} \mathrm{H}_1 (\mathcal{F} ) \rightarrow \mathrm{H}_1 \big(\mathcal{C}(Y, \xi ) \big) \rightarrow 0 
\end{align*}
where $\delta$ is given by the same formula as in Lemma \ref{Lemma:obstrsum2}. By $\mathbf{c}(\xi_j ) \notin \mathrm{Im}U$ and the Corollary \ref{cor:NoLagrangianRot} to Theorem \ref{commdiag} we again deduce that the classes $\mathcal{O}_{\xi_j}$ ($j = 0 , \ldots , n$) are homologically non-trivial (over $\mathbb{Q}$). Hence the $n$-dimensional subspace of $\mathrm{H}_1 \big( \mathcal{F} \big)$ given by the elements
\begin{align*}
    \big( \, b_1 \cdot \mathcal{O}_{\xi_0} \, , \, 0 \, , \, b_2 \cdot \mathcal{O}_{\xi_1} \, , \, 0 \, , \, \ldots \, , 0  \, , \, b_{n} \cdot \mathcal{O}_{\xi_{n-1}} \, , \, 0 \,  , \, 0  \, , \, 0 \, \big) \quad , \quad (b_j ) \in \mathbb{Q}^{n}
\end{align*}
injects as a subspace of $\mathrm{H}_1 \big( \mathcal{C}(Y, \xi ) \big) $. The proof of the formal triviality assertion is similar to the one given for Theorem \ref{mainthm}. The proof of Theorem \ref{mainthm2} is now complete. $\square$

\begin{Remark}\label{why}
When $Y $ is the sum of two aspherical $3$-manifolds we have $\pi_1 \mathrm{Diff}(Y) = 0$ (see Lemma \ref{pi1diff}). In the proof of Theorem \ref{mainthm} this allowed us to pass from a non-trival element in $\pi_1 \mathcal{C}(Y, \xi )$ to a non-trivial element in $\pi_0 \mathrm{Cont}_0 (Y, \xi )$ via the fibration (\ref{moser}). This is a special situation. For instance, if $Y$ is instead the sum of \textit{at least three} aspherical $3$-manifolds then it is known that $\pi_1 \mathrm{Diff} (Y)$ is not finitely generated \cite{diffsum3}. A better control on $\pi_1 \mathrm{Diff}(Y)$ for general $Y$ would allow us to understand whether the exotic loops of contact structures that we find in Theorem \ref{mainthm2} yield non-trivial contactomorphisms.
\end{Remark}

\section{Exotic phenomena in overtwisted contact $3$-manifolds} \label{OTsection}


In this final section we exhibit examples of $1$-parametric exotic phenomena in \textit{overtwisted} contact $3$-manifolds. 

On a heuristic level, Eliashberg's overtwisted $h$-principle \cite{EliashbergOT} is based on applying Gromov's \textit{h}-principle for open manifolds to the complement of a $3$-ball and using the overtwisted disk to fill in the ball. In the same spirit of this idea is what we call the "overtwisted escape principle", explained to us by F. Presas, which is a general strategy for proving an $h$-principle for a family of objects in a contact manifold $(Y, \xi )$. First, perform the connected sum with an overtwisted manifold $(M, \xi_{\mathrm{ot}})$, in order to apply the overtwisted \textit{h}-principle \cite{EliashbergOT,BEM} in the contact $3$-manifold $(Y,\xi)\#(M,\xi_{\mathrm{ot}})$. This could be thought of as analogous to opening up the $3$-manifold in the previous situation. Secondly, try to isotope the objects for which you want an $h$-principle so that they avoid ("escape") the overtwisted region $(M,\xi_{\mathrm{ot}})\setminus B$, where $B$ is a Darboux ball. However, there could be obstructions to carrying out this second step. There are two scenarios: if these obstructions can be sorted out then our initial problem satisfies an $h$-principle; if not these obstructions should give rise to an exotic phenomenon in the overtwisted contact manifold $(Y,\xi)\#(M,\xi_{\mathrm{ot}})$.  In \cite{CPP} the authors succesfully carry out this procedure to prove an existence \textit{h}-principle for codimension $2$ isocontact embeddings. Next, we will instead start out of a problem in $(Y, \xi )$ which we know is geometrically obstructed a priori, and from this deduce an exotic overtwisted phenomenon.

Let $e:S^2\rightarrow (Y,\xi)$ be a standard embedding into a contact manifold $(Y,\xi)$. A \em formal standard embedding \em of a sphere into $(Y,\xi)$ is a pair $(f,F^s)$, $s\in[0,1]$, such that $f\in\mathrm{Emb}(S^2,Y)$ is a smooth embedding and $F^s:TS^2\rightarrow f^*TY$ is a homotopy of vector bundle injections with $F^0 = df$ and $(F^1)^*\xi=e^*\xi\subset TS^2$. 
We will denote by $\mathrm{FCEmb}(S^2,(Y,\xi))$ the space of formal standard embeddings and by $\mathrm{FCEmb}(S^2,(Y,\xi),s)$ the subspace of formal standard embedding that coincide with $e$ over an open neighbourhood $U$ of the south pole $s\in S^2$.

Let $(M,\xi_\mathrm{ot})$ be an overtwisted contact $3$-manifold. Consider the overtwisted contact $3$-manifold $(Y_\#,\xi_\#)=(Y,\xi)\#(M,\xi_\mathrm{ot})$. We will consider the spaces $\mathrm{CEmb}(S^2,(Y_\#,\xi_\#),s)$ and $\mathrm{FCEmb}( S^2,(Y_\#,\xi_\#),s)$ as pointed spaces with base point given by the separating sphere $e : S^2 \hookrightarrow (Y_\#, \xi_\# )$. We have a natural inclusion $\mathrm{CEmb}(S^2,(Y_\#,\xi_\#),s)\hookrightarrow \mathrm{FCEmb}(S^2,(Y_\#,\xi_\#),s)$. From our previous discussion and the theory developed in this article we deduce the following

\begin{Corollary}\label{cor:RigidityInOTSpheres}
Assume that $(Y,\xi)$ is irreducible, $\xi$ has vanishing Euler class and $\mathbf{c}(\xi) \notin \mathrm{Im} U$. Then, there exists an element with infinite order in
$$ \mathrm{Ker} \Big(  \pi_1 \mathrm{CEmb}(S^2,(Y_\#,\xi_\#),s)\rightarrow \pi_1\mathrm{FCEmb}(S^2,(Y_\#,\xi_\#),s) \Big).$$
\end{Corollary}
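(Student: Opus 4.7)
The plan is to construct an infinite-order element of $\pi_1\mathrm{CEmb}(S^2,(Y_\#,\xi_\#),s)$ from the squared contact Dehn twist $\tau_{S_\#}^{2}$ and then show that its square $\gamma^{2}$ is trivial in $\pi_1\mathrm{FCEmb}(S^2,(Y_\#,\xi_\#),s)$. The three ingredients are (i) Lemma~\ref{lem:OT} for the contact isotopy of $\tau_{S_\#}^{2}$ to the identity in $(Y_\#,\xi_\#)$, (ii) Corollary~\ref{criterionB}, Proposition~\ref{moserdehn} and Theorem~\ref{commdiag} for Floer-theoretic detection on the tight summand, and (iii) the $2$-torsion of the spherical Dehn twist combined with the overtwisted $h$-principle for the formal counterpart.

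First I will produce the loop. Choose a contact isotopy $\{\varphi_t\}_{t\in[0,1]}$ in $\mathrm{Cont}(Y_\#,\xi_\#)$ with $\varphi_0=\mathrm{id}$ and $\varphi_1=\tau_{S_\#}^{2}$, granted by Lemma~\ref{lem:OT}. Because $\tau_{S_\#}^{2}$ fixes $s\in S_\#$, after correcting the frame loop $t\mapsto d\varphi_t|_s$ via a loop of contactomorphisms supported in an overtwisted ball of the $M$-summand, and further canceling the $1$-jet of $\varphi_1$ at $s$ using the parametric contact isotopy extension theorem, I may assume $\varphi_t$ fixes pointwise a small neighborhood $V$ of $s$ for every $t$. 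The loop $\gamma_t:=\varphi_t\circ e$ defines a class $\gamma\in\pi_1\mathrm{CEmb}(S^2,(Y_\#,\xi_\#),s)$.

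To show $\gamma$ has infinite order, I will compute the connecting homomorphism of the fibration
\begin{equation*}
\mathrm{Cont}(Y_\#,\xi_\#,S_\#,V)\longrightarrow \mathrm{Cont}(Y_\#,\xi_\#,V)\longrightarrow \mathrm{CEmb}(S^2,(Y_\#,\xi_\#),s)
\end{equation*}
arising from the action on $e$; by construction it sends $[\gamma^k]$ to $\tau_{S_\#}^{2k}\in\pi_0\mathrm{Cont}(Y_\#,\xi_\#,S_\#,V)$. Cutting along $S_\#$ decomposes $\pi_0\mathrm{Cont}(Y_\#,\xi_\#,S_\#,V)$ as a product over the two sides, with projection to the tight factor $(Y,\xi)$ sending $\tau_{S_\#}^{2k}$ to $\tau_{\partial B}^{2k}\in\pi_0\mathrm{Cont}_0(Y,\xi,B)$. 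Under the hypothesis $\mathbf{c}(\xi)\notin\mathrm{Im}\,U$, Theorem~\ref{commdiag} yields $\fctilde_1(\mathcal{O}_\xi)=\chi\ctilde(\xi)\neq 0$ in $\HMtilde(-Y,\mathfrak{s}_\xi)$, so $\mathcal{O}_\xi$ is non-torsion in $\mathrm{H}_1(\mathcal{C}(Y,\xi,B);\Q)$; then by Proposition~\ref{moserdehn} the element $\tau_{\partial B}^{2k}$ is non-zero in $\pi_0\mathrm{Cont}_0(Y,\xi,B)$ for every $k\geq 1$. Therefore $\gamma$ has infinite order.

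For formal triviality I will pass to $\gamma^{2}$. The smooth monodromy $\tau_{S_\#}^{4}\in\pi_0\mathrm{Diff}(Y_\#,S_\#,V)$ vanishes: in the local model, $\tau_{S_\#}^{4}$ decomposes as two copies of the Dehn twist on each side of $S_\#$, and the spherical Dehn twist has order $2$ in $\pi_0\mathrm{Diff}([-1,1]\times S^2,\partial)$. Combining this with the formal contact triviality of $\tau_{S_\#}^{2}$ provided by the formal step in the proof of Lemma~\ref{lem:OT} (which uses Lemma~\ref{ftrivialtwist} on the tight side and extends trivially to the overtwisted side), iterated once, together with the relative overtwisted $h$-principle (Theorem~\ref{thm:HPrincipleOT}, Remark~\ref{rmk:RelativeH-Principle}) and the formal analog of the fibration above (Lemma~\ref{moserflemma}), one produces a null-homotopy of $\gamma^{2}$ in $\mathrm{FCEmb}(S^2,(Y_\#,\xi_\#),s)$. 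The main obstacle I expect is organizing these formal homotopies coherently: the formal isotopy of $\tau_{S_\#}^{2}$ to $\mathrm{id}$ is only natural relative to an overtwisted ball in $M$ rather than relative to $V$ or $S_\#$, and accommodating this requires invoking the relative overtwisted $h$-principle parametrically, together with a careful analysis of the connecting maps of both the contact and formal fibrations, in order to ensure that the lifted formal loop in $\mathrm{FCont}(Y_\#,\xi_\#,V)$ is null-homotopic and hence that $\gamma^{2}$ is null in $\mathrm{FCEmb}$.
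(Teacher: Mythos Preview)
Your overall strategy and the infinite-order argument coincide with the paper's: both produce the loop from a contact isotopy $\varphi_t$ joining $\mathrm{id}$ to the squared Dehn twist on a parallel copy of $S_\#$, and both detect its non-triviality by projecting the monodromy to $\pi_0\mathrm{Cont}_0(Y,\xi,B)$ and invoking Corollaries~\ref{criterionB} and~\ref{cor:NoLagrangianRot}.

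The gap is in the formal triviality step. You invoke Lemma~\ref{lem:OT} as a black box to obtain $\varphi_t$ and are then left to show \emph{a posteriori} that the resulting loop is formally trivial; you attempt this by passing to $\gamma^2$ and combining the smooth triviality of $\tau_{S_\#}^4$ with various formal homotopies, but as you yourself flag, this is not carried through. The difficulty is genuine: what is required is a homotopy rel endpoints of the \emph{path} $\varphi_t$ (or $\varphi_t\ast\varphi_t$) into the fibre $\mathrm{FCont}(Y_\#,\xi_\#,S_\#,V)$, and this does not follow from endpoint information such as the formal or smooth triviality of $\tau_{S_\#}^{2}$ or $\tau_{S_\#}^{4}$ alone.

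The paper's route is to open the black box and reverse the order of the argument. One starts from the formal contact isotopy $\psi_t$ from $\mathrm{id}$ to $\tau_{S_\#^+}^2$ furnished directly by Lemma~\ref{ftrivialtwist}, which is rel $B$; hence $\psi_t$ fixes the $M$-side and in particular $S_\#$ for every $t$, so the loop $\psi_t\circ e$ is the \emph{constant} loop in $\mathrm{FCEmb}(S^2,(Y_\#,\xi_\#),s)$. One then applies Theorem~\ref{thm:HPrincipleOT} together with Lemma~\ref{moserflemma}, relative to a neighbourhood of $e(s)$, to deform $\psi_t$ \emph{through formal contactomorphisms} to a genuine contact isotopy $\varphi_t$. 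The loop $\varphi_t\circ e$ is then homotopic in $\mathrm{FCEmb}$ to the constant loop $\psi_t\circ e$, hence formally trivial by construction, and there is no need to pass to $\gamma^2$ or to organise any further homotopies.
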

\begin{Remark}
\begin{itemize}
\item This should be compared with Theorem \ref{thm:LinksSpheres}, which in particular asserts that this type of phenomenon does not happen when the underlying contact manifold is tight. 
\item Under the same assumptions, our proof also yields an element with infinite order in
$$ \mathrm{Ker} \Big(  \pi_1 \mathrm{CEmb}(S^2,(Y_\#,\xi_\#))\rightarrow \pi_1\mathrm{FCEmb}(S^2,(Y_\#,\xi_\#)) \Big).$$
\end{itemize}
\end{Remark}
\begin{proof}
Denote by $S_\#=e(S^2)$ the standard separating sphere. Consider the squared Dehn twist $\tau_{S_\#^+}^2$ along a parallel copy $S_{\#}^+$ of $S_{\#}$, where we assume that $S_{\#}^{+}$ is contained in $(Y,\xi)\backslash B$, where $B$ is the Darboux ball used to perform the connected sum. By the vanishing of the Euler class of $\xi$ there  exists a homotopy through formal contactomorphisms joining the identity with $\tau_{S_\#^+}^2$ (Lemma \ref{ftrivialtwist}). It follows from Eliashberg's Theorem \ref{thm:HPrincipleOT} combined with Lemma \ref{moserflemma} that we can deform this homotopy (through formal contactomorphisms) to a homotopy $\varphi_t$ through contactomorphisms with $\varphi_0=\mathrm{id}$ and $\varphi_1=\tau_{S_\#^+}^2$. This process can be done relative to an open neighbourhood of the south pole $e(s)\in (Y\#M,\xi\#\xi_{ot})$, see Remark \ref{rmk:RelativeH-Principle}.  The loop of standard spheres $\varphi_t\circ e$ is formally trivial by construction but geometrically non-trivial. Indeed, by the contact isotopy extension theorem, the triviality of this loop would imply that $\tau_{S_\#^+}^2$, regarded as a contactomorphism of $(Y,\xi)$, is contact isotopic to the identity rel. $B$, which is in contradiction with Corollaries \ref{criterionB} and \ref{cor:NoLagrangianRot}.
\end{proof}

Given a contact $3$-manifold $(Y,\xi)$ and a transverse knot $K\subset (Y,\xi)$ one can replace a small tubular neighbourhood of $K$ by a \textit{Lutz Twist} $(LT =\mathbb{D}^2\times S^1,\xi_{\mathrm{ot}})$ to obtain an \textit{overtwisted} contact manifold $(Y,\xi_K)$. Intuitively, the Lutz Twist $(LT ,\xi_{\mathrm{ot}})$ is an \textit{embedded} $S^1$-family of overtwisted disks, see \cite{geiges} for the precise definitions. We will denote by $\mathrm{LT}(Y,\xi_K)$ the space of contact embeddings $e: (LT,\xi_{\mathrm{ot}})\hookrightarrow (Y,\xi_K)$, regarded as a based space with basepoint the standard one, and by $\mathrm{FLT}(Y,\xi_K)$ the corresponding space of formal contact embeddings. As before, there is an inclusion map 
$\mathrm{LT}(Y,\xi_K)\rightarrow \mathrm{FLT}(Y,\xi_K)$. The following can be deduced following using the same strategy as above:

\begin{Corollary}\label{cor:RigidityInOTDehnTwists}
Let $(Y,\xi)$ be a irreducible contact $3$-manifold with vanishing Euler class and such that $\mathbf{c}(\xi) \notin \mathrm{Im} U$. Consider a Darboux ball $B\subset (Y,\xi)$ and a transverse knot $K\subset B$. Then, there exists an element with infinite order in $$\mathrm{Ker}\Big( \pi_1 \mathrm{LT}(Y,\xi_K)\rightarrow \pi_1\mathrm{FLT}(Y,\xi_K) \Big).$$
\end{Corollary}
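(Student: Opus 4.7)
The plan is to adapt the strategy of Corollary~\ref{cor:RigidityInOTSpheres}, now with Lutz tubes playing the role of standard convex spheres. Concretely, I will construct a loop $\gamma \in \pi_1\mathrm{LT}(Y,\xi_K)$ obtained by sweeping the standard Lutz tube embedding $e_0$ along a contact isotopy of $(Y,\xi_K)$ realizing a squared Dehn twist, then show that $\gamma$ is formally trivial yet has infinite order.

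\textbf{Building the loop.} Shrink $B$ so that $\nu(K) := e_0(LT) \subset B$, and pick an auxiliary Darboux ball $B'$ with $\nu(K) \subset B' \subset B$. Consider the squared Dehn twist $\tau_{\partial B'}^{2}$ on an exterior sphere parallel to $\partial B'$, as in \S\ref{generaldehn}. It fixes $B'$ pointwise, so it defines a contactomorphism of both $(Y,\xi)$ and $(Y,\xi_K)$ fixing $e_0$ pointwise. By Lemma~\ref{ftrivialtwist} (using the vanishing Euler class hypothesis), $\tau_{\partial B'}^{2}$ is formally contact isotopic to $\mathrm{id}$ in $(Y,\xi)$ rel $B'$; since $\xi = \xi_K$ on $Y \setminus B'$, this is equally a formal contact isotopy in $(Y,\xi_K)$ rel $B'$, hence rel $e_0(LT)$. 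Applying Eliashberg's relative parametric $h$-principle in the overtwisted manifold $(Y,\xi_K)$ (Theorem~\ref{thm:HPrincipleOT} with Lemma~\ref{moserflemma} and Remark~\ref{rmk:RelativeH-Principle}), relative to a small neighborhood of an overtwisted disk $\Delta_{\mathrm{ot}} \subset \nu(K)$, yields a genuine contact isotopy $\varphi_t \in \mathrm{Cont}(Y,\xi_K)$ with $\varphi_0 = \mathrm{id}$ and $\varphi_1 = \tau_{\partial B'}^{2}$. The loop $\gamma := \varphi_\bullet \circ e_0 \in \mathrm{LT}(Y,\xi_K)$ is then null-homotopic in $\mathrm{FLT}(Y,\xi_K)$, the null-homotopy being given by composing the formal isotopy with $e_0$.

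\textbf{Infinite order.} To show $\gamma$ has infinite order, I will detect its iterates back in $(Y,\xi)$. Suppose some $\gamma^k$ were null-homotopic in $\pi_1\mathrm{LT}(Y,\xi_K)$. The $k$-fold concatenation of $\varphi_\bullet$ realizes a path from $\mathrm{id}$ to $\tau_{\partial B'}^{2k}$ in $\mathrm{Cont}(Y,\xi_K)$ whose projection to $\mathrm{LT}$ is $\gamma^k$; by the covering homotopy property for the fibration $\mathrm{Cont}(Y,\xi_K, LT) \to \mathrm{Cont}(Y,\xi_K) \to \mathrm{LT}(Y,\xi_K)$, this path is homotopic rel endpoints to a contact isotopy $\Psi_t$ of $(Y,\xi_K)$ fixing an open neighborhood of $\nu(K)$ pointwise. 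Since $\xi_K = \xi$ on $Y \setminus \nu(K)$, restricting $\Psi_t$ and extending by the identity over $\nu(K)$ produces a contact isotopy of $(Y,\xi)$ from $\mathrm{id}$ to $\tau_{\partial B'}^{2k}$ rel any Darboux ball $B'' \subset \nu(K)$. A standard sphere isotopy in $Y \setminus B''$ identifies $[\tau_{\partial B'}^{2k}]$ with $[\tau_{\partial B''}^{2k}]$ in $\pi_0\mathrm{Cont}_0(Y,\xi, B'')$. Under our hypotheses, $\mathbf{c}(\xi) \notin \mathrm{Im}U$ rules out $Y$ being a spherical space form, so $Y$ is aspherical; then Corollary~\ref{criterionB}, Proposition~\ref{moserdehn} and Corollary~\ref{cor:NoLagrangianRot} together imply that $[\tau_{\partial B''}^{2k}]$ has non-trivial image $k \cdot \mathcal{O}_\xi \neq 0$ in $\mathrm{H}_1(\mathcal{C}(Y,\xi, B''); \mathbb{Q})$ (the latter being $\mathbb{Q}$-linear), contradicting contractibility.

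\textbf{Main obstacle.} The central technical point is the relative parametric $h$-principle of Step 1: one must produce $\varphi_t$ that is fixed near an overtwisted disk inside $\nu(K)$ while still realizing $\tau_{\partial B'}^{2}$ at $t = 1$. The restrict-extend step additionally requires arranging $\Psi_t$ to be the identity on an \emph{open} neighborhood of $\nu(K)$ (rather than merely pointwise on it) so that extension by the identity across $\partial\nu(K)$ yields a smooth contact isotopy; this is ensured by a small rel-$\nu(K)$ contact isotopy, since pointwise fixing is $C^\infty$-close to neighborhood fixing.
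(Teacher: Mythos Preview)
Your proof is correct and follows the same strategy the paper intends (the paper only says ``the following can be deduced following using the same strategy as above'', referring to the proof of Corollary~\ref{cor:RigidityInOTSpheres}). You set up the squared Dehn twist on a sphere enclosing the Lutz tube, use Lemma~\ref{ftrivialtwist} and Eliashberg's $h$-principle to obtain a contact isotopy in the overtwisted manifold, and detect non-triviality of all iterates by transporting back to $(Y,\xi)$ via the restrict--extend step, exactly as in the sphere case. Your treatment is in fact more careful than the paper's sketch of Corollary~\ref{cor:RigidityInOTSpheres}: you explicitly argue infinite order (not just non-triviality) by invoking that $\mathcal{O}_\xi$ has infinite order in $H_1(\mathcal{C}(Y,\xi,B'');\mathbb{Q})$ and that $\pi_1\mathrm{Diff}(Y,B'')=0$ for aspherical $Y$, and you flag the neighborhood-versus-pointwise issue in the restrict--extend step.
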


\printbibliography

\end{document}